\newtheorem{theorem}{Theorem}[section]
\newtheorem{proposition}{Proposition}
\newtheorem{remark}{Remark}
\newcommand{\mc}[1]{{\mathcal #1}}
\newcommand{\mb}[1]{{\mathbf #1}}
\DeclareMathOperator*\uplim{\overline{lim}}
\begin{document}

\title[Motion of a family of interacting curves in {space}]{Qualitative and numerical aspects of a motion of a family of interacting curves in {space}}

\author{Michal Bene\v{s}${}^{1}$}
\author{Miroslav Kol\'a\v{r}${}^{1}$}
\address{${}^{1}$ Department of Mathematics, Faculty of Nuclear Sciences and Physical Engineering Czech Technical University in Prague, Trojanova 13, Prague, 12000, Czech Republic}
\author{Daniel \v{S}ev\v{c}ovi\v{c}${}^{2}$}
\address{${}^{2}$ Department of Applied Mathematics and Statistics, Faculty of Mathematics Physics and Informatics, Comenius University, Mlynsk\'a dolina, 842 48, Bratislava, Slovakia. Corresponding author: {\tt sevcovic@fmph.uniba.sk} }

\begin{abstract}
In this  article  we investigate a system of geometric evolution equations describing a curvature driven motion of a family of 3D curves in the normal and binormal directions. Evolving curves may be subject of mutual interactions having both local or nonlocal character where the entire curve may influence evolution of other curves.  Such an evolution and interaction can be found in applications. We explore the direct Lagrangian approach for treating the geometric flow of such interacting curves. Using the abstract theory of nonlinear analytic semi-flows, we are able to prove local existence, uniqueness and continuation of classical H\"older smooth solutions to the governing system of nonlinear parabolic equations. Using the finite volume method, we construct an efficient numerical scheme solving the governing system of nonlinear parabolic equations. Additionally, a nontrivial tangential velocity is considered allowing for redistribution of discretization nodes. We also present several computational studies of the flow combining the normal and binormal velocity and considering nonlocal interactions.

\medskip
\noindent
2010 MSC. Primary: 35K57, 35K65, 65N40, 65M08; Secondary: 53C80.

\noindent Key words and phrases. Curvature driven flow, binormal flow, nonlocal flow, Biot-Savart law, interacting curves, analytic semi-flows, H\"older smooth solutions, flowing finite volume method.

\end{abstract}

\maketitle

\section{Introduction}

In this article we investigate motion of a family $\{\Gamma^i_t, t\ge 0, i=1,\ldots, n\}$ of interacting curves evolving in three dimensional Euclidean space (3D) according to the geometric evolution law:
\begin{equation}
\partial_t\mb{X}^i = v^i_N \mb{N^i} + v^i_B \mb{B^i}  +  v^i_T \mb{T}^i, \quad i=1, \ldots, n,
\label{eq:general}
\end{equation}
where the unit tangent $\mb{T}^i$, normal $\mb{N^i}$ and binormal $\mb{B^i}$ vectors form the Frenet frame. We explore the direct Lagrangian approach to treat the geometric motion law (\ref{eq:general}). The evolving curves $\Gamma^i_t$ are parametrized as $\Gamma^i_t = \{ \mb{X}^i(u,t), u\in I, t\ge 0\}$ where  $\mb{X}^i:I\times[0,\infty) \to \mathbb{R}^3$ is a smooth mapping. Hereafter,  $I=\mathbb{R}/\mathbb{Z}\simeq S^1$ denotes the periodic interval $I=[0,1]$ isomorphic to the unit circle $S^1$ with $\partial I = \emptyset$. We assume the scalar velocities $v^i_N, v^i_T, v^i_B$ to be smooth functions of the position vector $\mb{X}^i\in\mathbb{R}^3$, the curvature $\kappa^i$, the torsion $\tau^i$, and of all parametrized curves $\Gamma^j, j=1,\ldots,n$, i.e.
\[
v^i_K= v^i_K(\mb{X}^i, \kappa^i, \tau^i, \mb{T}^i, \mb{N}^i, \mb{B}^i, \Gamma^1, \ldots, \Gamma^n), \quad K\in\{T, N, B\}, \quad i=1, \ldots, n.
\]

Motion (\ref{eq:general}) of one-dimensional structures forming space curves can be  identified in variety of problems arising in science and engineering. Among them, one of the oldest is the dynamics of vortex structures formed along a one-dimensional curve, frequently a closed one, forming a vortex ring. The investigation of these structures dates back to Helmholtz \cite{Helmholtz1858}. Since then, the importance of vortex structures for both understanding nature and improving aerospace technology is reflected in many publications, from which Thomson \cite{Thomson:67}, Da Rios \cite{Rios:06}, Betchov \cite{Betchov:65}, Arms and Hama \cite{Arms:65} or Bewley \cite{Bewley:08} are a sample only. Vortex structures can be relatively stable in time and may contribute to weather behavior, e.g. tornados, or accompany volcanic activity (c.f. Fukumoto \emph{et al.} \cite{Fukumoto1987, Fukumoto1991}, Hoz and Vega \cite{Hoz2014}, Vega \cite{Vega2015}). Particular vortex linear structures can interact each with other and exhibit interesting  dynamics, e.g. known as frog leaps (c.f. Mariani and Kontis \cite{Mariani2010}). A comprehensive review of research of vortex rings can be found in Meleshko \emph{et al.} \cite{Meleshko:12}.

One-dimensional structures can also be formed within the crystalline lattice of solid materials. As described, e.g. by Mura \cite{Mura}, some defects  of the crystalline lattice (voids or interstitial atoms) can be organized along planar curves in glide planes. These structures are called the dislocations and are responsible for macroscopic material properties  explored in the everyday engineering practice (see Hirth and Lothe \cite{Hirth} or Kubin \cite{Kubin}). The dislocations can move along the glide planes, be influenced by the external stress field in the material as well as by the force field of other dislocations. Such interaction can lead to the change of the glide plane (cross-slip) where  the motion becomes three-dimensional (see Devincre \emph{et al.} \cite{Devincre} or Pau{\v s}  \emph{et al.} \cite{Paus2013} or Kol{\'a}{\v r} \emph{et al.} \cite{PBKK:21}).

Certain class of nano-materials is produced by electrospinning - jetting polymer solutions in high electric fields into ultrafine nanofibers (see Reneker \cite{Reneker}, Yarin \emph{et al.} \cite{Yarin}, He \emph{et al.} \cite{He}). These structures move freely in space according to (\ref{eq:general}) before they are collected to form the material with desired features. The motion of nano-fibers as open curves in 3D is a combination of curvature and elastic response to the external electric forces (see Xu \emph{et al.} \cite{Xu}). As the nano-fibers are produced from a solution, they are subject of a drying process during electrospinning and may be considered as 3D objects with internal mass transfer, in detailed models (see \cite{Wu}).

Some linear molecular structures with specific properties exist inside cells and exhibit specific dynamics in terms of (\ref{eq:general}) in space, which is rather a result of chemical reactions. They can interact with other structures as described in Fierling \emph{et al.} in \cite{Fierling2016} where the deformations and twist of fluid membranes by adhering stiff amphiphilic filaments have been studied, or in Shlomovitz \emph{et al.} \cite{Shlomovitz:09},  Shlomovitz \emph{et al.} \cite{Shlomovitz:11}, Roux \emph{et al.} \cite{Roux:06}, Kang \emph{et al.} \cite{Kang:17} or in Glagolev \emph{et al.} \cite{Glagolev:18}.

The motion of curves in space or along manifolds has also been explored, e.g. in optimization of the truss construction and architectonic design (see Reme\v{s}\'{\i}kov\'a \emph{et al.} \cite{MS2014}), in the virtual colonoscopy \cite{MU2014}, in the numerical modeling of the wildland-fire propagation (see Ambro\v z \emph{et al.} \cite{ambroz2019}), or in the satellite-image segmentation (in Mikula \emph{et al.} \cite{M2021}).

Theoretical analysis of the motion of space curves is contained, among first, in papers by Altschuler and Grayson in \cite{altschuler1991} and \cite{altschuler1992}. The motion of space curves became useful tool in studying the singularities of the two-dimensional curve dynamics.  Nonlocal curvature driven flows, especially in case of planar curves, have been studied e.g. by Gage and Epstein  \cite{Gage86}, \cite{EpsteinGage}. Nonlocal curvature flows were treated by the Cahn-Hilliard theory in \cite{RuSte92} and in \cite{BroSto97}. Conserved planar curvature flow has been further investigated by Bene{\v s}, Kol{\' a}{\v r},  and \v{S}ev\v{c}ovi\v{c} in \cite{KoBeSe:14,matcom,BKS2017}. Recently, Bene{\v s}, Kol{\' a}{\v r},  and \v{S}ev\v{c}ovi\v{c} analyzed the flow of  planar curves with mutual interactions  in \cite{BKS2020}.

Recent theoretical results in the analysis of vortex filaments are provided by Jerrard and Seis \cite{Jerrard:17}. The dynamics of curves driven by curvature in the binormal direction is discussed by Jerrard and Smets in \cite{Jerrard:15}. Particular issues were numerically studied by Ishiwata and Kumazaki in \cite{Ishiwata:12}.

Curvature driven flow in a higher dimensional Euclidean space and comparison to the motion of hypersurfaces with the constrained normal velocity have been studied by  Barrett \emph{et al.} \cite{Barret2010, Barret2012}, Elliott and Fritz \cite{Elliot2017}, Minar\v{c}\'{\i}k, Kimura  and  Bene{\v s} in \cite{MB2019}. Gradient-flow approach is explored by Laux and Yip \cite{Laux:19}. Long-term behavior of the length shortening flow of curves in $\mathbb{R}^3$ has been analyzed by  Minar\v{c}\'{\i}k and  Bene{\v s} in \cite{MB2020}.

More specifically, we focus on the analysis of the motion of a family of curves evolving in 3D and satisfying the law
\begin{equation}
\partial_t\mb{X}^i = a^i \partial^2_{s^i} \mb{X}^i + b^i (\partial_{s^i}\mb{X}^i\times\partial^2_{s^i} \mb{X}^i)  + \mb{F}^i,
\quad i=1, \ldots, n,
\label{eq:ab}
\end{equation}
where $a^i=a^i(\mb{X}^i, \mb{T}^i) \ge 0$, and $b^i=b^i(\mb{X}^i, \mb{T}^i)$ are bounded and smooth functions of their arguments, $\mb{T}^i$ is the unit tangent vector to the curve and $s^i$ is the unit arc-length parametrization of the curve $\Gamma^i$ (see Section 2). The source forcing term $\mb{F}^i$ is assumed to be a smooth and bounded function. It may depend on the position and tangent vectors of the $i$-th curve and integrals over other interacting curves as follows:
\begin{equation}
\mb{F}^i = \mb{F}^i(\mb{X}^i, \mb{T}^i, \gamma^{i1}, \ldots, \gamma^{in}) \quad \text{where}\ \
\gamma^{ij}(\mb{X}^i, \Gamma^j) = \int_{\Gamma^j} f^{ij}(\mb{X}^i, \mb{T}^i, \mb{X}^j, \mb{T}^j) ds^j,
\label{eq:F}
\end{equation}
and $f^{ij}:\mathbb{R}^3\times \mathbb{R}^3 \times \mathbb{R}^3\times \mathbb{R}^3\to \mathbb{R}^3, i,j=1,\ldots, n$,  are given smooth functions.
Since $\partial^2_s \mb{X}^i =\kappa^i \mb{N}^i$ and $\mb{B}^i = \mb{T}^i \times \mb{N}^i$ (see Section~\ref{sec:equations}) the relationship between geometric equations (\ref{eq:general}) and (\ref{eq:ab}) reads as follows:
\begin{equation}
v^i_N = a^i \kappa^i + \mb{F}^i \cdot \mb{N}^i, \quad
v^i_B = b^i \kappa^i + \mb{F}^i \cdot \mb{B}^i, \quad
v^i_T = \mb{F}^i \cdot \mb{T}^i.
\label{eq:rel}
\end{equation}
The system of equations (\ref{eq:ab}) is subject to initial conditions
\begin{equation}
\mb{X}^i(u,0)  = \mb{X^i_0}(u), u\in I, \quad i=1, \ldots, n,
\label{init-ab}
\end{equation}
representing parametrization of the family of initial curves $\Gamma^i_0, i=1, \ldots, n$.

As an example of nonlocal source terms $\mb{F}^i, i=1,\ldots,n,$ we can consider a flow of $n=2$ interacting curves evolving in 3D according to the geometric equations:
\begin{equation}
\label{biot-savart}
\begin{split}
\partial_t\mb{X}^1 &= \partial_s\mb{X}^1\times\partial^2_s \mb{X}^1 + \gamma^{12}(\mb{X}^1, \Gamma^2),
\\
\partial_t\mb{X}^2 &= \partial_s\mb{X}^2\times\partial^2_s \mb{X}^2 + \gamma^{21}(\mb{X}^2, \Gamma^1),
\end{split}
\end{equation}
where the nonlocal source term has the form:
\begin{equation}
\gamma^{ij}(\mb{X}^i, \Gamma^j) = \int_{\Gamma^j}
\frac{(\mb{X}^i-\mb{X}^j)\times \mb{T}^j}{|\mb{X}^i-\mb{X}^j|^3} ds^j .
\label{biot-savart-force}
\end{equation}
It represents the Biot-Savart law measuring the integrated influence of points $\mb{X}^j$ belonging to the second curve $\Gamma^j=\{ \mb{X}^j(u), u\in[0,1]\}$ at a given point $\mb{X}^i$ belonging to the first interacting curve $\Gamma^i$. In this example $a^i=0$ and $b^i=1$. Such a flow is analyzed in a more detail in Subsection~\ref{binormalonly}. In the case of a special configuration of the initial curves the dynamics can be reduced to a solution to a system on nonlinear ODEs. On the other hand, if $a^i>0$ and $b^i\in\mathbb{R}$ there are no explicit or semi-explicit solutions, in general. Therefore a stable numerical discretization scheme has to be developed. The scheme involving a nontrivial tangential velocity is derived and presented in Subsection~\ref{normalbinormal}. For such a configuration of normal $a^i>0$ and binormal $b^i$ components of the velocity we establish local existence, uniqueness and continuation of classical H\"older smooth solutions in Section 4. Here, we generalize methodology and technique of proofs of local existence, uniqueness and continuation provided in \cite{BKS2020} to the case of combined motion of closed space curves in normal and binormal direction with mutual nonlocal interactions. The novelty and main contribution of this part is the result on existence and uniqueness of classical solutions for a system on $n$ evolving curves in $\mathbb{R}^3$ with mutual nonlocal interactions including, in particular, the vortex dynamics evolved in the normal and binormal directions and external force of the Biot-Savart type, or evolution of interacting dislocation loops.

To avoid singularities in (\ref{biot-savart-force}) arising in intersections of $\Gamma^i$ and $\Gamma^j$ one can regularize the expression for $\gamma^{ij}$ as follows
\begin{equation}
\gamma^{ij}_\delta(\mb{X}^i, \Gamma^j) = \int_{\Gamma^j}
\frac{(\mb{X}^i-\mb{X}^j)\times \mb{T}^j}{(\delta^2+ |\mb{X}^i-\mb{X}^j|^2)^{3/2}} ds^j,
\label{biot-savart-force-regularized}
\end{equation}
where $\delta>0$ is a small regularization parameter.

In general, the flow of $n\ge 2$ interacting curves involving the Biot-Savart law is governed by the system of $n$ evolutionary equations:
\begin{equation}
\partial_t\mb{X}^i = \partial_s\mb{X}^i\times\partial^2_s \mb{X}^i
+ \sum_{j\not= i}\gamma^{ij}(\mb{X}^i, \Gamma^j), \quad i=1,\ldots, n.
\label{biot-savart-general}
\end{equation}

The paper is organized as follows. In the next Section, we recall principles of the direct Lagrangian approach for solving normal and binormal curvature driven flows of a family of interacting plane curves in 3D. In Section 2 we derive a system of nonlocal evolution partial differential equations for parametrizations of a family of evolving curves. Section 3 is focused on the role of a tangential velocity. We will show that a suitable choice of tangential velocity leads to construction of an efficient and stable numerical scheme for solving the governing system of nonlinear parabolic equations in Section 5. Secondly, it helps to simplify the proof of local existence of classical solutions (see Section 4). Local existence, uniqueness, and continuation of classical H\"older smooth solutions is shown in Section~4. The method of the proof is based on the abstract theory of analytic semi-flows in Banach spaces due to Angenent \cite{Angenent1990, Angenent1990b}. A numerical discretization scheme is derived in Section~5. We apply the flowing finite volume method for discretization of spatial derivatives and the method of lines for solving the resulting system of ODEs. Finally, examples of evolution of interacting curves are presented in Section~6. Interactions are modeled by means of the Biot-Savart nonlocal law. We show examples of interacting curves following the motion with binormal velocity only as well as evolution of arbitrary curves evolving in both normal and binormal directions.

\section{Dynamic governing equations for geometric quantities}
\label{sec:equations}

Assume the family of evolving curves is parametrized as follows: $\Gamma^i_t = \{ \mb{X}^i(u,t), u\in I, t\ge 0\}$ where  $\mb{X}^i:I\times[0,\infty)\to \mathbb{R}^3$ is a smooth mapping.  For brevity we drop the superscript $i$ and we let $\mb{X}=\mb{X}^i$ wherever it is not necessary. Then the unit arc-length parametrization $s$ is given by  $ds = |\partial_u\mb{X}| du$. The unit tangent vector is given by $\mb{T} = \partial_s \mb{X}$. 
In the case when the curvature $\kappa = |\mb{T} \times \partial_s\mb{T}|>0$ is strictly positive, we can define the so-called Frenet frame. It means that the unit normal and binormal vectors $\mb{N}$ and $\mb{B}$ can be uniquely defined as follows: $\mb{N} = \kappa^{-1} \partial_s \mb{T}$, $\mb{B} = \mb{T} \times \mb{N}$. These unit vectors satisfy the following identities:
\[
\mb{B} = \mb{T} \times \mb{N},
\qquad
\mb{T} = \mb{N} \times \mb{B},
\qquad
\mb{N} = \mb{B} \times \mb{T},
\]
and the Frenet-Serret formulae:
\[
\frac{d}{ds}
\left(
\begin{array}{c}
     \mb{T}
     \\
     \mb{N}
     \\
     \mb{B}
\end{array}
\right)
=
\left(
\begin{array}{ccc}
     0 & \kappa & 0
     \\
     -\kappa & 0 &\tau
     \\
     0 & -\tau & 0
\end{array}
\right)
\left(
\begin{array}{c}
     \mb{T}
     \\
     \mb{N}
     \\
     \mb{B}
\end{array}
\right),
\]
where $\tau$ is the torsion of a curve. For $\kappa>0$ the torsion $\tau$ is given by
\[
\tau =
\kappa^{-2} (\mb{T}\times\partial_s\mb{T}) \cdot \partial_s^2\mb{T}
= \kappa^{-2} (\partial_s\mb{X}\times\partial_s^2\mb{X}) \cdot \partial_s^3\mb{X} .
\]
Indeed, as $\partial_s\mb{B} = \partial_s\mb{T}\times \mb{N} + \mb{T}\times\partial_s\mb{N}
= \mb{T}\times\partial_s(\kappa^{-1}\partial_s\mb{T})  = \kappa^{-1}(\mb{T}\times\partial_s^2\mb{T})$, we obtain
\[
\tau = -\partial_s\mb{B} \cdot \mb{N} = - \kappa^{-1}(\mb{T}\times\partial_s^2\mb{T})\cdot \kappa^{-1}\partial_s\mb{T} =
 \kappa^{-2} (\mb{T}\times\partial_s\mb{T}) \cdot \partial_s^2\mb{T} .
\]

Concerning the dynamical governing equations we have the following proposition. Some of these identities have been already discovered as a particular case by other authors (see e.g. \cite{MB2020}, \cite{MB2019}). Our aim is to provide evolution equations  general settings of normal $v_N$, binormal $v_B$, and tangent velocities $v_T$. Although our approach is based on the analysis and numerical solution of the position vector equation (\ref{eq:ab}), we provide the dynamic equations for the curvature and torsion in the following proposition. 

\begin{proposition}\label{prop-1}
Assume a family of curves $\Gamma_t, t\ge 0,$ is evolving in 3D according to the geometric law:
\[
\partial_t\mb{X} = v_N \mb{N} + v_B \mb{B}  +  v_T \mb{T}.
\]
Then the unit vectors $\mb{N}, \mb{B}, \mb{T}$ forming the Frenet frame satisfy the following system of evolution partial differential equations:
\begin{eqnarray*}
\partial_t \mb{T} &=&
\left( \partial_s v_N  + \kappa v_T - \tau v_B\right)\mb{N}
+ \left( \partial_s v_B +  \tau v_N\right)\mb{B} ,
\\
\kappa \partial_t \mb{N} 
&=&
-\kappa \left( \partial_s v_N  + \kappa v_T - \tau v_B\right)\mb{T}
+ \left( \partial_s^2 v_B +  \partial_s (\tau v_N)
+\tau \left( \partial_s v_N  + \kappa v_T - \tau v_B\right)
\right)\mb{B},
\\
\kappa \partial_t\mb{B} 
&=& 
- \kappa \left( \partial_s v_B +  \tau v_N\right)\mb{T}
- \left( \partial_s^2 v_B +  \partial_s (\tau v_N)
+\tau \left( \partial_s v_N  + \kappa v_T - \tau v_B\right)
\right)\mb{N} .
\end{eqnarray*}
The local length element $g=|\partial_u\mb{X}|$ and the commutator $[\partial_t, \partial_s]:=\partial_t\partial_s - \partial_s\partial_t$ satisfy
\[
\partial_t g  =  (-\kappa v_N + \partial_s v_T ) g, \quad
\partial_t ds = (-\kappa v_N + \partial_s v_T ) ds, \quad
\partial_t\partial_s - \partial_s\partial_t = (\kappa v_N - \partial_s v_T )\partial_s .
\]
The curvature $\kappa$ and torsion $\tau$ 
(for $\kappa(s,t)>0$) 
satisfy the evolution equations:
\begin{eqnarray*}
\partial_t \kappa &=&  \partial_s^2 v_N  +  \kappa^2 v_N + v_T \partial_s \kappa
 - \partial_s(\tau v_B)
-\tau\partial_s v_B -  \tau^2 v_N ,
\\
\partial_t\tau &=&
\kappa\left( \partial_s v_B +  \tau v_N\right)
+
\partial_s\left(
\kappa^{-1}\left( \partial_s^2 v_B +  \partial_s (\tau v_N)
+\tau \left( \partial_s v_N  + \kappa v_T - \tau v_B\right)
\right)  \right)
\nonumber
\\
&& +  \tau(\kappa v_N-\partial_s v_T) .
\end{eqnarray*}
\end{proposition}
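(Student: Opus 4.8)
The plan is to derive every identity from three ingredients: the Frenet--Serret relations $\partial_s\mb{T}=\kappa\mb{N}$, $\partial_s\mb{N}=-\kappa\mb{T}+\tau\mb{B}$, $\partial_s\mb{B}=-\tau\mb{N}$; the commutation rule between $\partial_t$ and $\partial_s$; and the fact that $\{\mb{T},\mb{N},\mb{B}\}$ is an orthonormal moving frame. I would first establish the length and commutator formulas, since all the rest rests on them. Writing $g=|\partial_u\mb{X}|$ and differentiating $g^2=\partial_u\mb{X}\cdot\partial_u\mb{X}$ in time, then using $\partial_u\partial_t\mb{X}=g\,\partial_s(v_N\mb{N}+v_B\mb{B}+v_T\mb{T})$ and $\partial_u\mb{X}=g\mb{T}$, I extract the $\mb{T}$-component of $\partial_s(v_N\mb{N}+v_B\mb{B}+v_T\mb{T})$ via Frenet--Serret; this gives $\partial_t g=(-\kappa v_N+\partial_s v_T)g$ and hence $\partial_t\,ds$. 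Since $\partial_s=g^{-1}\partial_u$, evaluating $\partial_t(g^{-1}\partial_u\,\cdot)-g^{-1}\partial_u(\partial_t\,\cdot)$ then yields $[\partial_t,\partial_s]=(\kappa v_N-\partial_s v_T)\partial_s$.

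Next I would compute the frame evolution. As $\mb{T}=\partial_s\mb{X}$, I write $\partial_t\mb{T}=\partial_s\partial_t\mb{X}+[\partial_t,\partial_s]\mb{X}$; expanding $\partial_s(v_N\mb{N}+v_B\mb{B}+v_T\mb{T})$ in the Frenet frame, the $\mb{T}$-components cancel against $[\partial_t,\partial_s]\mb{X}=(\kappa v_N-\partial_s v_T)\mb{T}$, leaving the stated $\partial_t\mb{T}$ with coefficients $\alpha:=\partial_s v_N+\kappa v_T-\tau v_B$ on $\mb{N}$ and $\beta:=\partial_s v_B+\tau v_N$ on $\mb{B}$. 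Orthonormality forces $\partial_t(\mb{T},\mb{N},\mb{B})^\top$ to be governed by a skew-symmetric matrix, so $\partial_t\mb{T}$ already fixes the entries coupling $\mb{T}$ with $\mb{N}$ and $\mb{T}$ with $\mb{B}$; only $\gamma:=\partial_t\mb{N}\cdot\mb{B}$ remains. I would pin down $\gamma$ (and simultaneously $\partial_t\kappa$) through the compatibility identity $\partial_t\partial_s\mb{T}=\partial_s\partial_t\mb{T}+[\partial_t,\partial_s]\mb{T}$: the left side equals $(\partial_t\kappa)\mb{N}+\kappa\,\partial_t\mb{N}$, while the right side is computed from the $\partial_t\mb{T}$-formula and the commutator. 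Matching $\mb{N}$-components gives $\partial_t\kappa$ (after the two $\kappa\partial_s v_T$ terms cancel), and matching $\mb{B}$-components gives $\kappa\gamma=\partial_s\beta+\tau\alpha=\partial_s^2 v_B+\partial_s(\tau v_N)+\tau\alpha$, which is exactly the bracketed expression in the stated $\partial_t\mb{N}$ and $\partial_t\mb{B}$ equations; skew-symmetry then delivers both frame equations simultaneously as $\partial_t\mb{N}=-\alpha\mb{T}+\gamma\mb{B}$ and $\partial_t\mb{B}=-\beta\mb{T}-\gamma\mb{N}$.

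Finally, for the torsion I would use $\tau=\partial_s\mb{N}\cdot\mb{B}$ and differentiate: $\partial_t\tau=(\partial_t\partial_s\mb{N})\cdot\mb{B}+\partial_s\mb{N}\cdot\partial_t\mb{B}$, replacing $\partial_t\partial_s\mb{N}$ by $\partial_s\partial_t\mb{N}+[\partial_t,\partial_s]\mb{N}$ and substituting the frame equations just obtained. Collecting the $\mb{B}$-projection of the first term and using $\partial_s\mb{N}\cdot\partial_t\mb{B}=\kappa\beta$ produces $\partial_t\tau=\kappa\beta+\partial_s\gamma+\tau(\kappa v_N-\partial_s v_T)$, which is the asserted identity once $\gamma=\kappa^{-1}(\partial_s^2 v_B+\partial_s(\tau v_N)+\tau\alpha)$ and $\beta=\partial_s v_B+\tau v_N$ are inserted.

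I expect the extraction of $\gamma$ to be the crux: done naively one appears to need $\partial_t\mb{N}$ in order to get $\partial_t\kappa$ and vice versa, and it is precisely the skew-symmetry of the frame evolution combined with the $\partial_t\partial_s\mb{T}$ compatibility identity that breaks this apparent circularity by fixing $\partial_t\kappa$ and $\kappa\gamma$ together. The torsion step is the most calculation-heavy but otherwise routine once the frame equations and the commutator are available; the only real care there is bookkeeping the $\kappa^{-1}$ factor when differentiating $\gamma$.
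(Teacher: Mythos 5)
Your proposal is correct and follows essentially the same route as the paper: both rest on the commutator $[\partial_t,\partial_s]=(\kappa v_N-\partial_s v_T)\partial_s$, extract $\partial_t\mb{T}$ from $\partial_s\partial_t\mb{X}$ plus orthogonality, and obtain $\partial_t\kappa$ and $\kappa\partial_t\mb{N}$ simultaneously from the compatibility identity applied to $\partial_s\mb{T}=\kappa\mb{N}$ (the paper phrases this as expanding $\kappa\partial_t(\kappa^{-1}\partial_s\mb{T})$ and killing the $\mb{N}$-component via $\mb{N}\cdot\partial_t\mb{N}=0$, which is your $\mb{N}$/$\mb{B}$-component matching). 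The only cosmetic differences are that the paper reads off $\partial_t g$ from the vanishing $\mb{T}$-coefficient rather than differentiating $g^2$ first, obtains $\partial_t\mb{B}$ from $\partial_t(\mb{T}\times\mb{N})$ rather than from skew-symmetry, and uses $\tau=-\partial_s\mb{B}\cdot\mb{N}$ rather than $\tau=\partial_s\mb{N}\cdot\mb{B}$.
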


\begin{proof}
Denote $g=|\partial_u X|$. Then $ds = g du$. Using Frenet-Serret formulae we have
\begin{eqnarray}
\partial_t \mb{T} &=& \partial_t(g^{-1}\partial_u \mb{X}) = - g^{-1}\partial_t g\, \mb{T} + \partial_s\partial_t\mb{X}
= - g^{-1}\partial_t g \mb{T} + \partial_s (v_N \mb{N} + v_T \mb{T} + v_B \mb{B} )
\nonumber\\
&=&
\left( - g^{-1}\partial_t g + \partial_s v_T -\kappa v_N\right)\mb{T}
+
\left( \partial_s v_N  + \kappa v_T - \tau v_B\right)\mb{N}
+ \left( \partial_s v_B +  \tau v_N\right)\mb{B} .
\nonumber
\end{eqnarray}
Since $0=\partial_t (\mb{T}\cdot \mb{T})= 2 (\mb{T}\cdot \partial_t\mb{T})$ we have
\[
\partial_t \mb{T} =
\left( \partial_s v_N  + \kappa v_T - \tau v_B\right)\mb{N}
+ \left( \partial_s v_B +  \tau v_N\right)\mb{B},
\]
and, as a consequence, $\partial_t g  =  (-\kappa v_N + \partial_s v_T ) g$, and
$\partial_t\partial_s = \partial_s\partial_t + (\kappa v_N - \partial_s v_T )\partial_s$ because $\partial_t\partial_s  = \partial_t(g^{-1}\partial_u)=g^{-1} \partial_u\partial_t  -g^{-2} \partial_t g \partial_u$. Next
\begin{eqnarray}
\kappa \partial_t \mb{N} &=& \kappa \partial_t(\kappa^{-1}\partial_s \mb{T}) = - \partial_t \kappa\, \mb{N}
+
\partial_s\partial_t\mb{T}
+ (\kappa v_N - \partial_s v_T )\partial_s\mb{T}
\nonumber\\
&=&
\left( -\partial_t \kappa + \kappa^2 v_N - \kappa \partial_s v_T
\right)\mb{N}
+ \partial_s\partial_t\mb{T}
\nonumber
\\
&=&
\left( -\partial_t \kappa + \kappa^2 v_N + v_T \partial_s \kappa
+ \partial_s^2 v_N  - \partial_s(\tau v_B)
\right)\mb{N}
\nonumber
\\
&& + \left( \partial_s v_N  + \kappa v_T - \tau v_B\right)\partial_s\mb{N}
 + \left( \partial_s v_B +  \tau v_n\right)\partial_s\mb{B}
+ \left( \partial_s^2 v_B +  \partial_s (\tau v_N)\right)\mb{B}
\nonumber
\\
&=&
\left( -\partial_t \kappa + \kappa^2 v_N + v_T \partial_s \kappa
+ \partial_s^2 v_N  - \partial_s(\tau v_B)
-\tau \left( \partial_s v_B +  \tau v_N\right) \right)\mb{N}
\nonumber
\\
&& -\kappa \left( \partial_s v_N  + \kappa v_T - \tau v_B\right)\mb{T}
+ \left( \partial_s^2 v_B +  \partial_s (\tau v_N)
+\tau \left( \partial_s v_N  + \kappa v_T - \tau v_B\right)
\right)\mb{B} .
\nonumber
\end{eqnarray}
Since $0=\partial_t (\mb{N}\cdot \mb{N})= 2 (\mb{N}\cdot \partial_t\mb{N})$ we have
\begin{eqnarray}
\kappa \partial_t \mb{N} 
&=&
-\kappa \left( \partial_s v_N  + \kappa v_T - \tau v_B\right)\mb{T}
+ \left( \partial_s^2 v_B +  \partial_s (\tau v_N)
+\tau \left( \partial_s v_N  + \kappa v_T - \tau v_B\right)
\right)\mb{B},
\nonumber
\end{eqnarray}
and, as a consequence,
\[
\partial_t \kappa =  \partial_s^2 v_N  +  \kappa^2 v_N + v_T \partial_s \kappa
 - \partial_s(\tau v_B)
-\tau\partial_s v_B -  \tau^2 v_N .
\]
Finally, as $\partial_t\mb{B} = \partial_t\mb{T}\times\mb{N} + \mb{T}\times\partial_t\mb{N}$ and $\mb{B}\times\mb{N}=-\mb{T}$ and $\mb{T}\times\mb{B}=-\mb{N}$ we have
\[
\kappa \partial_t\mb{B} = - \kappa \left( \partial_s v_B +  \tau v_N\right)\mb{T}
- \left( \partial_s^2 v_B +  \partial_s (\tau v_N)
+\tau \left( \partial_s v_N  + \kappa v_T - \tau v_B\right)
\right)\mb{N} .
\]
In the case when the curvature $\kappa(s,t)$ is strictly positive,
the evolution equation for the torsion $\tau$ can be deduced from the fact $\tau=-\partial_s\mb{B}\cdot\mb{N}$, i.e.
\begin{eqnarray}
\partial_t\tau &=& -\partial_t\partial_s\mb{B} \cdot \mb{N}
-\partial_s\mb{B} \cdot \partial_t\mb{N}
\nonumber\\
&=& -\left( \partial_s\partial_t\mb{B} +  (\kappa v_N-\partial_s v_T)\partial_s\mb{B}\right)\cdot\mb{N} + \tau \mb{N}\cdot\partial_t\mb{N}
\nonumber\\
&=& -\left( \partial_s\partial_t\mb{B}\right) \cdot\mb{N}  +  \tau(\kappa v_N-\partial_s v_T)
\nonumber\\
&=& -\partial_s\left(
- \left( \partial_s v_B +  \tau v_N\right)\mb{T}
- \kappa^{-1}\left( \partial_s^2 v_B +  \partial_s (\tau v_N)
+\tau \left( \partial_s v_N  + \kappa v_T - \tau v_B\right)
\right)\mb{N}
\right) \cdot\mb{N}
\nonumber\\
&& +  \tau(\kappa v_N-\partial_s v_T)
\nonumber
\\
&=&
\kappa\left( \partial_s v_B +  \tau v_N\right)
+
\partial_s\left(
\kappa^{-1}\left( \partial_s^2 v_B +  \partial_s (\tau v_N)
+\tau \left( \partial_s v_N  + \kappa v_T - \tau v_B\right)
\right)  \right)
\nonumber
\\
&& +  \tau(\kappa v_N-\partial_s v_T).
\nonumber
\end{eqnarray}

\end{proof}

As a consequence of the previous proposition we obtain the following results concerning temporal evolution of global quantities integrated over the evolving curves:

\begin{proposition}\label{prop-global}
Assume a family of curves $\Gamma_t, t\ge 0,$ evolving in 3D according to the geometric law:
\[
\partial_t\mb{X} = v_N \mb{N} + v_B \mb{B}  +  v_T \mb{T}.
\]
Then, the length $L(\Gamma)=\int_\Gamma ds$ and the generalized area $A(\Gamma)=\frac12 \int_\Gamma (\mb{X}\times \partial_s \mb{X})\cdot\mb{B}\, ds$ enclosed by $\Gamma$  satisfy the following identities:
\begin{eqnarray*}
\frac{d}{dt} L(\Gamma) &=& -\int_\Gamma \kappa v_N ds ,
\\
\frac{d}{dt} A(\Gamma) &=&
- \int_\Gamma  v_N ds
-\frac12 \int_\Gamma  (\mb{X}\times \partial_t \mb{X})\cdot\partial_s \mb{B} \, ds
+\frac12 \int_\Gamma (\mb{X}\times \partial_s  \mb{X})\cdot \partial_t\mb{B} \, ds .
\end{eqnarray*}
In particular, if the family  $\Gamma_t, t\ge 0,$  of curves evolves in parallel planes then $A(\Gamma)$ is the area enclosed by $\Gamma$, and $\frac{d}{dt} A(\Gamma) = -\int_\Gamma v_N ds $.

\end{proposition}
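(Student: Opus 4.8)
The plan is to differentiate the two geometric functionals by exploiting the evolution identities for the arclength element and the Frenet vectors already established in Proposition~\ref{prop-1}, together with the basic fact that the integral of any total $s$-derivative over a closed curve vanishes (here $\partial I=\emptyset$ and every quantity is periodic in $u$, so $\int_\Gamma\partial_s\Psi\,ds=\int_I\partial_u\Psi\,du=0$). For the length this is essentially immediate: starting from $L(\Gamma)=\int_\Gamma ds=\int_I g\,du$ and using $\partial_t g=(-\kappa v_N+\partial_s v_T)g$ from Proposition~\ref{prop-1}, differentiation under the integral sign gives $\frac{d}{dt}L=\int_\Gamma(-\kappa v_N+\partial_s v_T)\,ds$, and the total-derivative term $\int_\Gamma\partial_s v_T\,ds$ drops out, leaving $-\int_\Gamma\kappa v_N\,ds$.

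For the generalized area I would set $\Phi=(\mb{X}\times\partial_s\mb{X})\cdot\mb{B}$ and write $A=\tfrac12\int_I\Phi\,g\,du$. The differentiation rule to use is $\frac{d}{dt}\int_\Gamma\Phi\,ds=\int_\Gamma\big(\partial_t\Phi+(-\kappa v_N+\partial_s v_T)\Phi\big)\,ds$, where $\partial_t$ is the material derivative at fixed $u$. Expanding $\partial_t\Phi$ requires commuting $\partial_t$ and $\partial_s$ on the middle factor $\partial_s\mb{X}$ via $[\partial_t,\partial_s]=(\kappa v_N-\partial_s v_T)\partial_s$ from Proposition~\ref{prop-1}; this produces a term $(\kappa v_N-\partial_s v_T)\Phi$ that exactly cancels the measure-stretching term $(-\kappa v_N+\partial_s v_T)\Phi$. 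After this cancellation one is left with
\[
\frac{d}{dt}A=\tfrac12\int_\Gamma\Big[(\partial_t\mb{X}\times\partial_s\mb{X})\cdot\mb{B}+(\mb{X}\times\partial_s\partial_t\mb{X})\cdot\mb{B}+(\mb{X}\times\partial_s\mb{X})\cdot\partial_t\mb{B}\Big]\,ds,
\]
whose last term already matches the claimed formula.

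It remains to handle the first two terms. I would integrate the middle one by parts: expanding $\partial_s\big[(\mb{X}\times\partial_t\mb{X})\cdot\mb{B}\big]$ and integrating over the closed curve (so the left-hand side vanishes) gives $\int_\Gamma(\mb{X}\times\partial_s\partial_t\mb{X})\cdot\mb{B}\,ds=-\int_\Gamma(\partial_s\mb{X}\times\partial_t\mb{X})\cdot\mb{B}\,ds-\int_\Gamma(\mb{X}\times\partial_t\mb{X})\cdot\partial_s\mb{B}\,ds$. Combining this with the first term and using antisymmetry of the cross product collapses the two triple products into $-\int_\Gamma(\partial_s\mb{X}\times\partial_t\mb{X})\cdot\mb{B}\,ds$. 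Finally I would evaluate this triple product explicitly: since $\partial_s\mb{X}=\mb{T}$ and $\partial_t\mb{X}=v_N\mb{N}+v_B\mb{B}+v_T\mb{T}$, the Frenet relations yield $\mb{T}\times\partial_t\mb{X}=v_N\mb{B}-v_B\mb{N}$, hence $(\partial_s\mb{X}\times\partial_t\mb{X})\cdot\mb{B}=v_N$. This delivers the $-\int_\Gamma v_N\,ds$ term together with the remaining $-\tfrac12\int_\Gamma(\mb{X}\times\partial_t\mb{X})\cdot\partial_s\mb{B}\,ds$, completing the area identity.

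For the planar specialization I would observe that when all curves lie in parallel planes the binormal $\mb{B}$ is the constant unit normal to those planes, so $\partial_s\mb{B}=\partial_t\mb{B}=0$; the last two integrals then vanish and the generalized area coincides with the ordinary signed area by Green's theorem, giving $\frac{d}{dt}A=-\int_\Gamma v_N\,ds$. The algebra is light throughout; the one step demanding genuine care is the differentiation of the area functional, where the cancellation between the material derivative of $\Phi$ (through the $[\partial_t,\partial_s]$ commutator) and the stretching of the measure $\partial_t\,ds$ must be tracked correctly before the closed-curve integration by parts is applied.
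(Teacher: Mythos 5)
Your proposal is correct and follows essentially the same route as the paper: the length identity via $\partial_t g=(-\kappa v_N+\partial_s v_T)g$ and closedness of the curve, then the area identity by differentiating the triple product, integrating the $\partial_s\partial_t\mb{X}$ term by parts over the closed curve, and evaluating $(\partial_s\mb{X}\times\partial_t\mb{X})\cdot\mb{B}=v_N$. The only cosmetic difference is that the paper passes to the $u$-parametrization (writing $A=\tfrac12\int_0^1(\mb{X}\times\partial_u\mb{X})\cdot\mb{B}\,du$ so that $\partial_t$ and $\partial_u$ commute), whereas you stay in arclength and track the explicit cancellation between the commutator $[\partial_t,\partial_s]$ and the stretching of $ds$ — the two bookkeeping schemes yield the same intermediate integrand.
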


\begin{proof}
The first statement follows from the identity $\partial_t g  =  (-\kappa v_N + \partial_s v_T ) g$. Indeed,
\[
\frac{d}{dt} L(\Gamma) = \frac{d}{dt} \int_0^1 g du = \int_0^1 \partial_t g du = \int_\Gamma  (-\kappa v_N + \partial_s v_T) ds = -\int_\Gamma \kappa v_N ds ,
\]
because $\Gamma$ is a closed curve. Therefore $\int_\Gamma \partial_s v_T ds =0$.

As for the second statement, we have $A(\Gamma)=\frac12 \int_\Gamma (\mb{X}\times \partial_s \mb{X})\cdot\mb{B}\, ds = \frac12 \int_0^1 (\mb{X}\times \partial_u \mb{X})\cdot\mb{B}\, du$, and so
\begin{eqnarray*}
\frac{d}{dt} A(\Gamma) &=& \frac12 \int_0^1 (\partial_t \mb{X}\times \partial_u \mb{X})\cdot\mb{B}
+ (\mb{X}\times \partial_u \partial_t \mb{X})\cdot\mb{B}
+ (\mb{X}\times \partial_u  \mb{X})\cdot \partial_t\mb{B}
\, du
\\
 &=& \frac12 \int_\Gamma (\partial_t \mb{X}\times \partial_s \mb{X})\cdot\mb{B}
+ (\mb{X}\times \partial_s \partial_t \mb{X})\cdot\mb{B}
+ (\mb{X}\times \partial_s  \mb{X})\cdot \partial_t\mb{B} \, ds
\\
 &=& - \int_\Gamma  (\partial_s \mb{X}\times \partial_t \mb{X})\cdot\mb{B}\, ds
-\frac12 \int_\Gamma  (\mb{X}\times \partial_t \mb{X})\cdot\partial_s \mb{B} \, ds
+\frac12 \int_\Gamma (\mb{X}\times \partial_s  \mb{X})\cdot \partial_t\mb{B} \, ds
\\
 &=& - \int_\Gamma  v_N ds
-\frac12 \int_\Gamma  (\mb{X}\times \partial_t \mb{X})\cdot\partial_s \mb{B} \, ds
+\frac12 \int_\Gamma (\mb{X}\times \partial_s  \mb{X})\cdot \partial_t\mb{B} \, ds\, .
\end{eqnarray*}
In particular, if the  family of 3D curves $\Gamma_t, t\ge 0,$ evolves in parallel planes with the normal vector $\mb{b}$ then the binormal vector $\mb{B} = \pm \mb{b}/|\mb{b}|$ is a constant vector perpendicular to this plane. As a consequence,  $\partial_t\mb{B} = \partial_s\mb{B} =0$, and the proof of the last statement of the proposition follows from the fact that the enclosed area of a curve belonging to the plane $x_3=0$ is given by $A(\Gamma)=\frac12 \int_\Gamma x_1\partial_s x_2 - x_2\partial_s x_1 ds$, and $(Q\mb{a}\times Q\mb{d})\cdot Q\mb{c} = (\mb{a}\times \mb{d})\cdot \mb{c}$ for any rotation matrix $Q$ transforming the vector $\mb{b}$ to the vector $(0,0,1)^T$.
\end{proof}

\section{The role of tangential redistribution}
\label{redistribution}

The tangential velocity $v_T$ appearing in the geometric evolution (\ref{eq:ab}) has no impact on the shape of evolving family of curves $\Gamma^i_t, t\ge 0$. It means that the curves $\Gamma^i_t, t\ge 0,$ evolving according to the system of geometric equations:
\begin{equation}
\partial_t\mb{X}^i = a^i \partial^2_{s^i} \mb{X}^i + b^i (\partial_{s^i}\mb{X}^i\times\partial^2_{s^i} \mb{X}^i)  + \mb{F}^i +\alpha^i \mb{T}^i. \quad i=1,\ldots, n,
\label{eq:ab-alpha}
\end{equation}
do not depend on a particular choice of the total tangential velocity $v^i_T$ given by
\[
v^i_T = \mb{F}^i \cdot \mb{T}^i + \alpha^i .
\]
However, the tangential velocity has a significant impact on the analysis of evolution of curves from both the analytical as well as numerical points of view. It was shown by Hou et al. \cite{Hou}, Kimura \cite{Kimura}, Mikula and \v{S}ev\v{c}ovi\v{c} \cite{sevcovic2001evolution, MS2004, MMAS2004}, Yazaki and \v{S}ev\v{c}ovi\v{c} \cite{SevcovicYazaki2012}.  Barrett \emph{et al.} \cite{Barret2010, Barret2012}, Elliott and Fritz \cite{Elliot2017}, investigated the gradient and elastic flows for closed and open curves in $\mathbb{R}^d, d\ge 2$. They constructed a numerical approximation scheme using a suitable tangential redistribution. Kessler \emph{et al.} \cite{Kessler1984} and Strain \cite{Strain1989} illustrated the role of suitably chosen tangential velocity in numerical simulation of the two-dimensional snowflake growth and unstable solidification models. Later, Garcke \emph{et al.} \cite{Garcke2009} applied the uniform tangential redistribution in the theoretical proof of nonlinear stability of stationary solutions for curvature driven flow with triple junction in the plane. 

A suitable choice of $v_T$ can be very useful in order to prove local existence of solution. Furthermore, it can significantly help to construct a stable an efficient numerical scheme preventing from undesirable accumulation of grid points during curve evolution. Calculating the derivative ratio $g^i/L(\Gamma^i)$ with respect to time we obtain
\begin{equation}
\frac{\partial }{\partial t}
\frac{g^i}{L^i} = \frac{\partial_t g^i}{L^i} -\frac{g^i}{(L^i)^2} \frac{d L^i}{dt}  =\frac{g^i}{L^i}\left(
-\kappa^i v^i_N +\partial_{s^i} v^i_T +\frac{1}{L^i} \int_{\Gamma^i} \kappa v^i_N ds^i
\right),
\label{alpha}
\end{equation}
where $L^i=L(\Gamma^i_t)$. As a consequence, the relative local length $g^i/L^i$ is constant with respect to the time $t$, i.e.
\[
\frac{g^i(u,t)}{L(\Gamma^i_t)} =  \frac{g^i(u,0)}{L(\Gamma^i_0)}, \quad u\in I, t\ge 0,
\]
provided that the total tangential velocity $v^i_T$ satisfies:

\begin{equation}
\partial_{s^i} v^i_T= \kappa^i v^i_N  -\frac{1}{L^i} \int_{\Gamma^i} \kappa v^i_N ds^i,
\label{unifalpha}
\end{equation}
(c.f. Hou and Lowengrub \cite{Hou}, Kimura \cite{Kimura},  Mikula and \v{S}ev\v{c}ovi\v{c} \cite{sevcovic2001evolution}). Since $v^i_T=\mb{F}^i \cdot \mb{T}^i + \alpha^i$ the additional tangential velocity $\alpha^i$  given by
\begin{equation}
\label{eq:alpha}
\alpha^i(s^i)  = - \mb{F}^i(s^i) \cdot \mb{T}^i(s^i)  + \mb{F}^i(0) \cdot \mb{T}^i(0) + \alpha^i(0)
+ \int_0^{s^i} \kappa^i v^i_N ds^i  - s^i \frac{1}{L^i} \int_{\Gamma^i} \kappa^i v^i_N ds^i.
\end{equation}
$s^i\in [0, L^i]$, ensures that the relative local length $g^i/L^i$ is constant with respect to time, and
\[
g^i(u,t) =  g^i_0(u)\frac{L(\Gamma^i_t)}{L(\Gamma^i_0)}, \quad u\in I, t\ge 0, i=1,\ldots, n,
\]
where $g^i_0(u)= g^i(u,0)$. The tangential velocity is subject to the  normalization constraint $\int_{\Gamma^i} \alpha^i ds^i =0$.

Another suitable choice of the total tangential velocity $v^i_T$ is the so-called asymptotically uniform tangential velocity proposed and analyzed by Mikula and \v{S}ev\v{c}ovi\v{c} in \cite{MS2004, MMAS2004}. If
\begin{equation}
\partial_{s^i} v^i_T= \kappa^i v^i_N  -\frac{1}{L^i} \int_{\Gamma^i} \kappa v^i_N ds^i  + \left( \frac{L^i}{g^i} - 1\right) \omega ,
\label{alpha-asymptotic}
\end{equation}
then, using (\ref{alpha}) we obtain
\[
\lim_{t\to \infty} \frac{g^i(u,t)}{L(\Gamma^i_t)} =1
\]
uniformly with respect to $u\in[0,1]$ provided $\omega>0$. It means that the redistribution becomes asymptotically uniform. In the context of evolution of 3D curves or the curves evolving on a given surface, the concept uniform and asymptotically uniform redistribution has been analyzed and successfully implemented for various applications by Mikula and \v{S}ev\v{c}ovi\v{c} in \cite{MS2004, MS2014}, Mikula \emph{et al.} \cite{M2021}, Bene{\v s} \emph{et al.} \cite{0965-0393-24-3-035003}, Ambro\v{z} \emph{et al.} \cite{ambroz2019}, and others.

\begin{remark}\label{uniform}
Suppose that the initial curve $\Gamma_0$ is uniformly parametrized, i.e. $g_0(u)=|\partial_u \mb{X}(u,0)| = L(\Gamma_0)$. If $\alpha$ is a tangential velocity preserving the relative local length then
\[
g(u,t)=|\partial_u \mb{X}(u,t)| = L(\Gamma_t), \quad  \text{and}\ \ ds = L(\Gamma_t) du,\quad  s\in[0, L(\Gamma_t)].
\]
\end{remark}

\section{Existence and uniqueness of classical solutions}
\label{existence}

In this section we provide existence and uniqueness results for the system of nonlinear nonlocal equations (\ref{eq:ab-alpha}) governing the motion of interacting closed curves in 3D. The method of the proof of existence and uniqueness is based on the abstract theory of analytic semi-flows in Banach spaces due to DaPrato and Grisvard \cite{daprato}, Angenent \cite{Angenent1990, Angenent1990b}, Lunardi \cite{Lunardi1984}. 
Local existence and uniqueness of a classical H\"older smooth solution is based on analysis of the position vector equation (\ref{eq:ab-alpha}) in which we choose the uniform tangential velocity $\alpha^i$. It leads to a uniformly parabolic equation  (\ref{eq:ab-alpha}) provided the diffusion coefficients $a^i$ are uniformly bounded from below by a positive constant. As a consequence, assumptions on strict positivity of the curvature $\kappa^i$ and the existence of the Frenet frame are not required, in our method of the proof.
The main idea is to rewrite the system (\ref{eq:ab-alpha}) in the form of an initial value problem for the abstract parabolic equation:
\begin{equation}
\partial_t \mb{X} + \mathscr{F}(\mb{X}) = 0, \quad \mb{X}(0) = \mb{X}_0,
\label{abstratF}
\end{equation}
in a suitable Banach space. Furthermore, we have to show that, for any $\tilde{\mb{X}}$, the linearization $\mathscr{F}'(\tilde{\mb{X}} )$ generates an analytic semigroup and it belongs to the so-called maximal regularity class of linear operators mapping the Banach space $\mathcal{E}_1$ into Banach space $\mathcal{E}_0$.

Note that the principal part $a \partial^2_s \mb{X} + b (\partial_s\mb{X}\times\partial^2_s \mb{X})$ of the velocity vector $\partial_t\mb{X}$  can be expressed in the matrix form as follows:
\[
a \partial^2_s \mb{X} + b (\partial_s\mb{X}\times\partial^2_s \mb{X})
\equiv
{\mathcal A}(a,b,\partial_s\mb{X}) \partial^2_s \mb{X},
\]
where ${\mathcal A}(a,b,\mb{T})$ is a $3\times 3$ matrix,
\[
{\mathcal A}(a,b,\mb{T}) = a I + b [\mb{T}]_\times :=
\left(
\begin{array}{ccc}
     a & -b T_3 & b T_2
     \\
     b T_3 & a & -b T_1
     \\
     -b T_2 & b T_1 & a
\end{array}
\right).
\]
Clearly, the symmetric part $\frac12( {\mathcal A} + {\mathcal A}^T) = a I\succ 0 $ is a positive definite matrix for $a>0$. If $a=0$ then ${\mathcal A}$ is an indefinite and antisymmetric matrix, i.e., ${\mathcal A} = - {\mathcal A}^T$. For given values $a,b$ and a unit vector $\mb{T}$, the eigenvalues of the matrix ${\mathcal A}$ are: $\mu_1 = a, \mu_{2} = a - i b, \mu_3= a + i b$. It means that the governing equation:
\begin{equation}
\partial_t\mb{X} = {\mathcal A}(a,b,\partial_s\mb{X}) \partial^2_s \mb{X} + \mb{F}
+\alpha \mb{T}
\label{abstract-form}
\end{equation}
is of the parabolic type provided $a>0$ whereas it is of the hyperbolic type if $a=0$ and $b\not=0$. In the case of $n\ge 2$ interacting curves the system of governing equations reads as follows:
\begin{equation}
\begin{split}
\partial_t\mb{X}^1 &= {\mathcal A}(a^1,b^1,\partial_{s^1}\mb{X}^1) \partial^2_{s^1} \mb{X}^1
\ + \mb{F^1}(\mb{X}^1, \partial_{s^1}\mb{X}^1, \gamma^{11}, \ldots, \gamma^{1n}) \ +\alpha^1 \mb{T}^1,
\\
\vdots &
\\
\partial_t\mb{X}^n &= {\mathcal A}(a^n,b^n,\partial_{s^n}\mb{X}^n) \partial^2_{s^n} \mb{X}^n
+ \mb{F^n}(\mb{X}^n, \partial_{s^n}\mb{X}^n, \gamma^{n1}, \ldots, \gamma^{nn}) +\alpha^n \mb{T}^n,
\end{split}
\label{abstract-form-general}
\end{equation}
where $\gamma^{ij}=\gamma^{ij}(\mb{X}^i, \Gamma^j)$ for $i,j=1,\ldots, n$.

\subsection{Maximal regularity for parabolic equations with complex valued  diffusion functions}

Assume $0<\varepsilon<1$ and $k$ is a nonnegative integer. Let us denote by $h^{k+\varepsilon}(S^1)$ the so-called little H\"older space, i.e. the Banach space which is the closure of $C^\infty$ smooth functions in the norm  Banach space of $C^k$ smooth functions defined on the periodic domain $S^1$, and such that the $k$-th derivative is $\varepsilon$-H\"older smooth. The norm is being given as a sum of the $C^k$ norm and the H\"older semi-norm of the $k$-th derivative.

Among many important properties of H\"older spaces $h^{k+\varepsilon}(S^1)$ there is an interpolation inequality. Let $\varepsilon^{\prime\prime}, \varepsilon^{\prime}, \varepsilon \in (0,1), k^{\prime\prime}, k^{\prime}, k \in \mathbb{N}_0$ be such that  $ k^{\prime\prime} + \varepsilon^{\prime\prime} <  k^{\prime} + \varepsilon^{\prime} < k + \varepsilon$.
Then, for any $\delta>0$ there exists $C_\delta>0$ such that \begin{equation}
\Vert\varphi \Vert_{h^{k^{\prime}+\varepsilon^{\prime}}} \le \Vert\varphi \Vert_{h^{k+\varepsilon}}^\theta \Vert\varphi \Vert_{h^{k^{\prime\prime}+\varepsilon^{\prime\prime}}}^{1-\theta}
\le \delta \Vert\varphi \Vert_{h^{k+\varepsilon}} + C_\delta \Vert\varphi \Vert_{h^{k^{\prime\prime}+\varepsilon^{\prime\prime}}},
\label{interpolation}
\end{equation}
for any $\varphi\in h^{k+\varepsilon}(S^1)$, where $\theta=(k^{\prime}+ \varepsilon^{\prime} - k^{\prime\prime}  - \varepsilon^{\prime\prime})/ (k+\varepsilon - k^{\prime\prime}  - \varepsilon^{\prime\prime}) \in (0,1)$.

In what follows, we shall assume that the functions $a,b\in h^{1+\varepsilon}(S^1)$, and  $a>0$ is strictly positive. Let us define the following linear second order differential operators $A,B : h^{2+\varepsilon}(S^1) \to h^{\varepsilon}(S^1)$:
\begin{equation}
A\varphi =-\partial_u(a(\cdot)\partial_u\varphi), \qquad B\varphi =-\partial_u(b(\cdot)\partial_u\varphi), \quad \text{for}\ \varphi\in h^{2+\varepsilon}(S^1).
\label{AB-operators}
\end{equation}
The spectra $\sigma(A)\subset [0,\infty), \sigma(B)\subset\mathbb{R}$, consist of discrete real eigenvalues. Furthermore, the linear operators $\pm i B$ generate the $C^0$ group of linear operators $\{e^{\pm i B t}, t\in \mathbb{R}\}$. It means that the function $\xi(t) = e^{\pm i B t} \xi_0$ is a solution to the Schr\"odinger equation
\[
\partial_t\xi = \pm i B \xi, \quad \xi(0)=\xi_0.
\]
Recall that the spectrum $\sigma(B)$ consists of real eigenvalues. Hence the linear operator $e^{\pm i B t}$ is bounded in the space $L(C^k(S^1))$ uniformly with respect to $t\ge 0$. Since $h^{k+\varepsilon}(S^1)$ is an interpolation space between $C^k(S^1)$ and $C^{k+1}(S^1)$ there exists a constant $c_0>0$ depending on the function $b$ only and such that
\begin{equation}
\Vert e^{\pm i B t} \Vert_{L(h^{k+\varepsilon}(S^1))} \le c_0 \quad \text{for}\ k=0,2\ \text{and any}\ t\ge 0.
\label{Bbound}
\end{equation}
Moreover, $\lim_{t\to 0} e^{\pm i B t} = I$ in the respective norms of linear operators, $k=0,2$.

Next, we shall prove the maximal regularity of solutions to the linear evolutionary equation:
\begin{equation}
\partial_t \varphi + ( A+ i B)\varphi = f, \quad t\ge 0, \qquad \varphi(0)=\varphi_0.
\label{ABequation}
\end{equation}
That is to show the existence of a unique solution $\varphi\in{\mathcal H}_1(0,T)$ for the given right-hand side $f\in{\mathcal H}_0(0,T)$ and initial condition $\varphi_0\in h^{2+\varepsilon}(S^1)$ and $T>0$  Here we have denoted by ${\mathcal H}_0, {\mathcal H}_1$ the following Banach spaces:
\begin{equation}
{\mathcal H}_1(0,T) = C([0,T], h^{2+\varepsilon}(S^1)) \cap C^1([0,T], h^{\varepsilon}(S^1)),
\quad
{\mathcal H}_0(0,T) = C([0,T], h^{\varepsilon}(S^1)).
\end{equation}

Consider the transformed function $\psi=e^{i B t} (\varphi-\varphi_0)$. Then $\varphi$ is a solution to (\ref{ABequation}) if and only if $\psi$ is a solution to the equation:
\begin{equation}
\partial_t \psi +  A \psi = R_t \psi + \hat f, \quad t\ge 0, \qquad \psi(0)=0,
\label{Aequation}
\end{equation}
where $R_t = A - e^{i B t} A e^{-i B t}$, $\hat f = e^{i B t} (f  - ( A+ i B)\varphi_0)$.  Clearly, $\hat f\in {\mathcal H}_0(0,T)$. Recall that the linear operator $A=- \partial_u(a\partial_u)$ generates an analytic semi-group of operators $\{e^{-A t}, t\ge 0\}$. Moreover, it belongs to the so-called maximal regularity class ${\mathcal M}(h^{2+\varepsilon}, h^{\varepsilon})$ (c.f. \cite{Angenent1990b}, \cite{Angenent1990}, \cite{daprato}). It means that the linear operator $\partial_t + A : {\mathcal H}_1(0,T) \to {\mathcal H}_0(0,T)$ is invertible, i.e. for any $\hat g: \in {\mathcal H}_0(0,T)$ and $\psi_0\in h^{2+\varepsilon}(S^1)$ there exists a unique solution $\psi\in {\mathcal H}_1(0,T)$ of the initial value problem $\partial_t\psi  + A\psi = \hat g, \ \psi(0)=\psi_0$, and,  $\Vert \psi\Vert_{{\mathcal H}_1(0,T)} \le c_1 ( \Vert \hat g\Vert_{{\mathcal H}_0(0,T)} + \Vert\psi_0 \Vert_{h^{2+\varepsilon}} ) $ where  $c_1>0$ is a constant.

Since $\lim_{t\to 0} R_t = 0$ there exists a time $0<T_0\le T$ depending on the functions $a$ and $b$ only, and such that $\Vert (\partial_t +A)^{-1} R_t \Vert_{L({\mathcal H}_1(0,T_0))} < 1$. As a consequence, the operator $I - (\partial_t +A)^{-1} R_t$ is invertible in the space ${\mathcal H}_1(0,T_0)$. That is the operator $\partial_t  + (A + i B)$ is invertible on the time interval $[0, T_0]$. Now, starting from the initial condition $\psi_0=\psi(T_0)$ we can continue the solution $\psi$ over the larger interval $[0, T_0]\cup[T_0, 2T_0]$. Continuing in this manner, we can conclude that the operator $A + i B$ generates an analytic semigroup $e^{-(A + i B) t}, t\ge 0,$ and it belongs to the maximal regularity class ${\mathcal M}(h^{2+\varepsilon}, h^{\varepsilon})$ on the entire time interval $[0,T]$.

Notice that $(a + i b) \partial^2_u \varphi = \partial_u( (a + i b) \partial_u \varphi) -  (\partial_u a + i \partial_u b) \partial_u \varphi = (A + i B)\varphi -  (\partial_u a + i \partial_u b) \partial_u \varphi$. As $\partial_u a, \partial_u b \in h^\varepsilon(S^1)$ and the Banach space $h^{1+\varepsilon}$ is an interpolation space between the Banach spaces $h^{\varepsilon}$ and  $h^{2+\varepsilon}$, the perturbation operator $ \mathscr{A}_1 = -  (\partial_u a + i \partial_u b) \partial_u : h^{2+\varepsilon} \to h^\varepsilon$ has the relative zero norm, i.e. for any $\delta>0$ there exists a constant $C_\delta>0$ such that $\Vert \mathscr{A}_1 \varphi \Vert_{h^\varepsilon} \le \delta \Vert \varphi \Vert_{h^{2+\varepsilon}} +C_\delta \Vert \varphi \Vert_{h^{\varepsilon}}$ for each $\varphi\in h^{2+\varepsilon}$. Here we have used the interpolation inequality (\ref{interpolation}). Since the class of linear operators belonging to the maximal regularity class is closed with respect to perturbations with the zero relative norm (c.f. \cite[Lemma 2.5]{Angenent1990}), we conclude that the operator $-(a + i b) \partial^2_u $ belongs to the maximal regularity class ${\mathcal M}(h^{2+\varepsilon}, h^{\varepsilon})$ on the time interval $[0,T]$.

If we denote
\[
{\mathcal Q}=\left(
\begin{array}{ccc}
 T_1 & T_1 T_2+i T_3 & T_1
   T_2-i T_3 \\
 T_2 & -T_1^2-T_3^2 & -T_1^2-T_3^2
   \\
 T_3 & T_2 T_3-i T_1 & T_2 T_3 + i T_1
\end{array} ,
\right)
\]
then ${\mathcal Q}$ is a similarity matrix such that ${\mathcal Q}^{-1} {\mathcal A} {\mathcal Q} = {\mathcal D}$ where ${\mathcal D} = diag(\mu_1, \mu_2, \mu_3)$, $\mu_1=a, \mu_2=a-i b, \mu_3 = a+ i b$. Note that the matrix ${\mathcal Q}={\mathcal Q}(\mb{T})$ analytically depend on the vector $\mb{T}\in\mathbb{R}^3$.

For given $0<\varepsilon <1$ and $k=0, \frac12, 1$ we define the following scale of Banach spaces of H\"older continuous functions defined on the periodic domain $S^1$:
\begin{equation}
E_k = h^{2k +\varepsilon}(S^1)\times h^{2k +\varepsilon}(S^1) \times h^{2k +\varepsilon}(S^1).
\label{Espaces}
\end{equation}

\begin{proposition}\label{maximalregularity}
Assume $a,b\in h^{1+\varepsilon}(S^1)$ and the function $a$ is strictly positive, $a>0$ . Let $T>0$. Then
\begin{enumerate}
    \item the operator $-(a \pm i b) \partial^2_u $ belongs to the maximal regularity class ${\mathcal M}(h^{2+\varepsilon}(S^1), h^{\varepsilon}(S^1))$ on the time interval $[0,T]$,
\item if $\mb{T} \in E_{\frac12}, |\mb{T}|=1$, then the linear operator ${\mathcal A}(a,b,\mb{T}) \partial^2_u  = (a I + b [\mb{T}]_\times) \partial^2_u $  belongs to the maximal regularity class ${\mathcal M}(E_1, E_0)$ on the time interval $[0,T]$.
\end{enumerate}

\end{proposition}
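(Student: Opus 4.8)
The plan is as follows. Claim (1) has, in effect, already been assembled in the discussion preceding the proposition, and I would simply collect those steps. The substitution $\psi=e^{iBt}(\varphi-\varphi_0)$ turns \eqref{ABequation} into \eqref{Aequation}, where $A=-\partial_u(a\partial_u\cdot)$ generates an analytic semigroup and lies in $\mathcal{M}(h^{2+\varepsilon},h^{\varepsilon})$, while $R_t\to 0$ as $t\to0$ produces a contraction $\|(\partial_t+A)^{-1}R_t\|<1$ on a short interval $[0,T_0]$ that is propagated to $[0,T]$ by continuation. The first-order term $\mathscr{A}_1=-(\partial_u a+i\partial_u b)\partial_u$, controlled via the interpolation inequality \eqref{interpolation} and \cite[Lemma 2.5]{Angenent1990}, has zero relative norm and upgrades $A+iB$ to $-(a+ib)\partial^2_u$; the operator $-(a-ib)\partial^2_u$ is handled identically with the conjugate group $e^{\mp iBt}$, whose uniform bound \eqref{Bbound} is already in hand.

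For claim (2) the idea is to diagonalize the system pointwise and reduce it to three scalar problems of the type settled in claim (1). Since $a>0$ is continuous on the compact set $S^1$ one has $a\ge a_0>0$; together with $\mathbf{T}\in E_{\frac12}=h^{1+\varepsilon}(S^1)^3$ and the analytic dependence of $\mathcal{Q}=\mathcal{Q}(\mathbf{T})$ on $\mathbf{T}$, the entries of $\mathcal{A}(a,b,\mathbf{T})$, of $\mathcal{Q}$ and of $\mathcal{Q}^{-1}$ all belong to $h^{1+\varepsilon}(S^1)$, with $\mathcal{Q}(u)^{-1}\mathcal{A}(u)\mathcal{Q}(u)=\mathcal{D}(u)=\mathrm{diag}(\mu_1,\mu_2,\mu_3)$ and $\mathrm{Re}\,\mu_k=a\ge a_0>0$.

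The decisive observation is that membership of a second-order operator with continuous top-order coefficients in the maximal regularity class is a local condition on its principal symbol, so I would freeze the coefficients. For each fixed $u_0\in S^1$ the constant matrix $\mathcal{Q}(u_0)$ diagonalizes $\mathcal{A}(u_0)$, and conjugation by this fixed invertible matrix is an isomorphism of $E_1$ and of $E_0$ that carries $-\mathcal{A}(u_0)\partial^2_u$ into the decoupled operator $-\mathcal{D}(u_0)\partial^2_u=\mathrm{diag}\bigl(-\mu_1(u_0)\partial^2_u,-\mu_2(u_0)\partial^2_u,-\mu_3(u_0)\partial^2_u\bigr)$. Each scalar entry has constant coefficient with positive real part and hence lies in $\mathcal{M}(h^{2+\varepsilon},h^{\varepsilon})$ by the constant-coefficient instance of claim (1); since the spaces $\mathcal{H}_1,\mathcal{H}_0$ over $E_1,E_0$ are Cartesian products, the diagonal operator lies in $\mathcal{M}(E_1,E_0)$, and therefore so does the frozen operator $-\mathcal{A}(u_0)\partial^2_u$. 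As this holds for every $u_0$ and the top-order coefficients are little-H\"older continuous, the localization and perturbation machinery of Angenent \cite{Angenent1990,Angenent1990b} and DaPrato--Grisvard \cite{daprato} then yields $-\mathcal{A}(\cdot)\partial^2_u\in\mathcal{M}(E_1,E_0)$.

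The main obstacle is precisely the regularity of $\mathcal{Q}$. Because $\mathbf{T}\in E_{\frac12}$ gives only $\mathcal{Q}\in h^{1+\varepsilon}$, one cannot conjugate the variable-coefficient operator globally: writing $\mathbf{X}=\mathcal{Q}\mathbf{Y}$ generates a term containing $\partial^2_u\mathcal{Q}$, which is undefined at this regularity. The first-order commutator, involving only $\partial_u\mathcal{Q}\in h^{\varepsilon}$, would be a harmless zero-relative-norm perturbation by interpolation, but the second-order one is not. Freezing the coefficients and conjugating solely the constant matrices $\mathcal{Q}(u_0)$ circumvents this difficulty entirely, at the cost of having to invoke the correct local characterization of the maximal regularity class; checking that this characterization is available in the little-H\"older scale $(E_1,E_0)$ and is compatible with the product decomposition of those spaces is the step demanding the most care.
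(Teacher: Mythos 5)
Your treatment of claim (1) is essentially the paper's own: the proposition carries no separate proof because the discussion preceding it --- the substitution $\psi=e^{iBt}(\varphi-\varphi_0)$, the smallness of $R_t$ for small $t$, continuation to $[0,T]$, and absorption of the first-order remainder $\mathscr{A}_1$ by the interpolation inequality and the zero-relative-norm perturbation lemma --- is that proof, and you reproduce it faithfully. For claim (2) you take a genuinely different route. The paper's intended argument is the global similarity ${\mathcal Q}^{-1}{\mathcal A}{\mathcal Q}={\mathcal D}$, reducing the system to the three scalar operators of claim (1); you correctly observe that at the stated regularity this reduction is problematic, since $\mb{T}\in E_{1/2}$ gives only ${\mathcal Q}\in h^{1+\varepsilon}(S^1)$, so pointwise multiplication by ${\mathcal Q}$ is not an isomorphism of $E_1$ and the change of variables produces a $\partial^2_u{\mathcal Q}$ term that does not exist. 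Your substitute --- freeze the coefficients at each $u_0$, conjugate by the constant matrix ${\mathcal Q}(u_0)$ (a simultaneous isomorphism of $E_1$ and $E_0$), obtain maximal regularity of the frozen diagonal operator from the constant-coefficient scalar case and the product structure of the spaces, then localize --- is the standard treatment of normally elliptic systems and is more robust: it needs only pointwise diagonalizability of the normal matrix ${\mathcal A}(u_0)=aI+b[\mb{T}]_\times$, which always holds, whereas the explicit ${\mathcal Q}$ printed in the paper degenerates at $\mb{T}=(0,\pm 1,0)^T$, so the global $h^{1+\varepsilon}$-regularity and invertibility of ${\mathcal Q}$ that you assert in your setup paragraph can actually fail (harmless for you, since your argument never uses it). The price is the one you flag yourself: the localization/partition-of-unity step for systems in the little-H\"older scale is not contained in Angenent's Lemma 2.5 nor in DaPrato--Grisvard as cited, and as written it remains an appeal to machinery rather than a proof; it should either be carried out explicitly or imported from the theory of normally elliptic systems. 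A cheaper repair staying closer to the paper's conjugation would be to approximate $\mb{T}$ in $h^{1+\varepsilon}$ by smooth unit tangent fields (little-H\"older spaces are closures of $C^\infty$), conjugate the smoothed operator, and conclude from the openness of ${\mathcal M}(E_1,E_0)$ in $L(E_1,E_0)$ together with the bound $\Vert b[\mb{T}-\mb{T}_\eta]_\times\partial^2_u\mb{X}\Vert_{E_0}\le C\Vert \mb{T}-\mb{T}_\eta\Vert_{h^{\varepsilon}}\Vert \mb{X}\Vert_{E_1}$.
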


\subsection{Local existence and uniqueness of H\"older smooth solutions}

Let us denote $\mb{X}$ the vector of parametrizations belonging to the Banach space $\mc{E}_{k}$
\[
\mb{X} = (\mb{X}^1, \ldots, \mb{X}^n) \in \mc{E}_{k}, \quad \text{where} \
\mc{E}_{k}= \underbrace{E_{k}\times\ldots\times E_{k}}_{n - times}, \quad k=0,\ 1/2,\ 1.
\]
Clearly, we have the following continuous and compact embedding: $\mc{E}_{1}\hookrightarrow \mc{E}_{1/2}\hookrightarrow \mc{E}_{0}$.

Now, let us define the  mapping $\mathscr{F}_0:\mc{E}_{1} \to \mc{E}_{0}$ as the principal part of the evolution equation (\ref{abstract-form}), i.e.
$\mathscr{F}^i_0(\mb{X}) = {\mathcal A}(a^i,b^i,\partial_{s^i}\mb{X}^i) \partial^2_{s^i} \mb{X}^i$. To prove local existence and uniqueness of solutions we employ the so-called uniform tangential redistribution velocity  defined in Section~\ref{redistribution}. If $\alpha^i$ is such that the total tangential redistribution $v_T^i = \mb{F}^i\cdot  \partial_{s^i}\mb{X}^i + \alpha^i$, then $g^i(u,t)=|\partial_u\mb{X}^i| = L(\Gamma^i_t)$ provided that the initial curve $\Gamma^i_0$ is parametrized uniformly, i.e. $g^i(u,0)= L(\Gamma^i_0)$ for each $u\in [0,1], i=1,\ldots, n$ (see Remark~\ref{uniform}). Hence
\[
d s^i = L(\Gamma^i) du, \ u\in [0,1], \ s^i\in [0, L(\Gamma^i)].
\]
With this parametrization the operator $\mathscr{F}^i_0(\mb{X})$ can be rewritten as follows:
\[
\mathscr{F}^i_0(\mb{X}) = L(\Gamma^i)^{-2} {\mathcal A}(a^i,b^i,\partial_{s^i}\mb{X}^i) \partial^2_{u} \mb{X}^i.
\]
Further, we define the nonlocal  mapping $\mathscr{F}_1:\mc{E}_{1/2} \to \mc{E}_{0}$ as follows:
\[
\mathscr{F}^i_1(\mb{X}) = \mb{F}^i(\mb{X}^i, \partial_{s^i}\mb{X}^i, \gamma^{i1}, \ldots, \gamma^{in}),
\]
where $\mb{X}\in \mc{E}_{1/2}$ and the interaction terms  are defined as in (\ref{eq:F}), i.e.
\[
\gamma^{ij}(\mb{X}^i, \Gamma^j) = \int_{\Gamma^j} f^{ij}(\mb{X}^i, \partial_{s^i}\mb{X}^i, \mb{X}^j, \partial_{s^j}\mb{X}^j) ds^j.
\]
Finally, we define the tangential part $\mathscr{F}_2$ of equation (\ref{abstract-form}), i.e. $\mathscr{F}^i_2(\mb{X}^i) = \alpha^i \partial_{s^i}\mb{X}^i$. Concerning qualitative properties of the functions
$a^i=a^i(\mb{X}^i, \mb{T}^i),  b^i=b^i(\mb{X}^i, \mb{T}^i), \mb{F}^i = \mb{F}^i(\mb{X}^i, \mb{T}^i, \gamma^{i1}, \ldots, \gamma^{in})$ where
$\gamma^{ij}(\mb{X}^i, \Gamma^j) = \int_{\Gamma^j} f^{ij}(\mb{X}^i, \mb{T}^i, \mb{X}^j, \mb{T}^j) ds^j$
we will assume the following structural hypothesis:
\begin{equation}
\left\{
\begin{aligned}
 & a^i, b^i: \mathbb{R}^3\times \mathbb{R}^3 \to \mathbb{R}, \quad a^i\ge \underline{a} >0,
 \\
 & \mb{F}^i:\mathbb{R}^3\times \mathbb{R}^3 \times \mathbb{R}^n\to \mathbb{R}^3, \quad
 f^{ij}:\mathbb{R}^3\times \mathbb{R}^3 \times \mathbb{R}^3\times \mathbb{R}^3\to \mathbb{R}^3,
 \quad \text{for} \ i,j=1,\ldots, n,
\\
& \text{are $C^2$ smooth and globally Lipschitz continuous functions, $\underline{a}>0$ is a constant.}
\end{aligned}
\right.
\label{smootheness}\tag{H}
\end{equation}

\begin{proposition}\label{Fder}
Assume the hypothesis (\ref{smootheness}) and $\alpha^i, i=1, \ldots, n$, is the tangential velocity preserving the  relative local length. Let $\tilde{\mb{X}}\in \mc{E}_{1}$ be such that $\tilde{g}^i>0$ for each $i=1,\ldots, n$. Then,
\begin{enumerate}
\item The principal part mapping $\mathscr{F}_0:\mc{E}_{1} \to \mc{E}_{0}$ is  $C^1$ differentiable. Its Fr\'echet derivative $\mathscr{F}^\prime_0(\tilde{\mb{X}})$ belongs to the maximal regularity class $\mathcal{M}(\mathcal{E}_1, \mathcal{E}_0)$.

\item The nonlocal mappings $\mathscr{F}_1$ and $\mathscr{F}_2$ are $C^1$ differentiable as mappings from $\mc{E}_{1/2}$ into $\mc{E}_{0}$. The Fr\'echet derivative $\mathscr{F}^\prime_k(\tilde{\mb{X}}), k=1,2$, considered now as a mapping from $\mc{E}_{1}$ into $\mc{E}_{0}$ has the relative zero norm.

\item The total mapping $\mathscr{F}:\mc{E}_{1} \to \mc{E}_{0}$ where $\mathscr{F}= \mathscr{F}_0 + \mathscr{F}_1 + \mathscr{F}_2$ is  $C^1$ differentiable, and $\mathscr{F}^\prime(\tilde{\mb{X}})$ belongs to the maximal regularity class $\mathcal{M}(\mathcal{E}_1, \mathcal{E}_0)$.

\end{enumerate}

\end{proposition}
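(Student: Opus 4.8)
The plan is to treat $\mathscr{F}=\mathscr{F}_0+\mathscr{F}_1+\mathscr{F}_2$ term by term, reducing each claim to the model maximal-regularity result of Proposition~\ref{maximalregularity}(2) together with the stability of the maximal regularity class under perturbations of zero relative norm (\cite[Lemma 2.5]{Angenent1990}). Throughout I use that, under the uniform redistribution with uniform initial parametrization, $g^i=|\partial_u\mb{X}^i|=L(\Gamma^i)$, hence $\partial_{s^i}=L(\Gamma^i)^{-1}\partial_u$ and $|\partial_{s^i}\mb{X}^i|=1$, so that $\mathscr{F}^i_0(\mb{X})=L(\Gamma^i)^{-2}\mathcal{A}(a^i,b^i,\partial_{s^i}\mb{X}^i)\partial^2_u\mb{X}^i$.

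For part (1) I first establish $C^1$ smoothness of $\mathscr{F}_0:\mc{E}_1\to\mc{E}_0$. The map is assembled from the nonlocal but smooth length functional $L(\Gamma^i)=\int_0^1|\partial_u\mb{X}^i|\,du$, from the matrix $\mathcal{A}(a^i,b^i,\mb{T}^i)=a^iI+b^i[\mb{T}^i]_\times$ whose entries are $C^2$ functions of $(\mb{X}^i,\mb{T}^i)$ composed with $\mb{T}^i=L(\Gamma^i)^{-1}\partial_u\mb{X}^i$, and from $\partial^2_u\mb{X}^i$; since multiplication is continuous on the relevant Hölder spaces and the Nemytskii operators induced by the $C^2$ coefficients are $C^1$, the composition is $C^1$. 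Differentiating, the only second-order contribution to $\mathscr{F}'_0(\tilde{\mb{X}})\mb{h}$ is the block-diagonal operator $\mb{h}^i\mapsto L(\tilde\Gamma^i)^{-2}\mathcal{A}(\tilde a^i,\tilde b^i,\partial_{s^i}\tilde{\mb{X}}^i)\partial^2_u\mb{h}^i$; all remaining terms, arising from differentiating $L^{-2}$, $a^i$, $b^i$ and $\mb{T}^i$, involve at most $\partial_u\mb{h}^i$. Since $\tilde a^i,\tilde b^i\in h^{1+\varepsilon}$ with $\tilde a^i\ge\underline{a}>0$ and $\partial_{s^i}\tilde{\mb{X}}^i\in E_{1/2}$ is a unit vector, Proposition~\ref{maximalregularity}(2) applies to each diagonal block, the positive scalar factor $L(\tilde\Gamma^i)^{-2}$ being harmless; hence the second-order part lies in $\mathcal{M}(\mathcal{E}_1,\mathcal{E}_0)$. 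The first-order remainder has zero relative norm by the interpolation inequality (\ref{interpolation}), and closedness of the maximal regularity class under such perturbations yields $\mathscr{F}'_0(\tilde{\mb{X}})\in\mathcal{M}(\mathcal{E}_1,\mathcal{E}_0)$.

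For part (2), $\mathscr{F}_1$ depends on $\mb{X}^i$ only through $\mb{X}^i$, $\partial_{s^i}\mb{X}^i$ and the integral interactions $\gamma^{ij}$; because integration over $\Gamma^j$ is bounded and smoothing and the integrands $f^{ij}$ together with the outer function $\mb{F}^i$ are $C^2$ and globally Lipschitz by (\ref{smootheness}), $\mathscr{F}_1:\mc{E}_{1/2}\to\mc{E}_0$ is $C^1$, and $\mathscr{F}'_1(\tilde{\mb{X}})$ involves at most $\partial_u\mb{h}$, hence has zero relative norm as a map $\mc{E}_1\to\mc{E}_0$ by (\ref{interpolation}). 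The term $\mathscr{F}_2=\alpha^i\partial_{s^i}\mb{X}^i$ is the delicate one and I expect it to be the main obstacle: through $\int_0^{s^i}\kappa^iv^i_N\,ds^i$ with $v^i_N=a^i\kappa^i+\mb{F}^i\cdot\mb{N}^i$, the velocity $\alpha^i$ carries the contribution $\int_0^{s^i}\mb{F}^i\cdot\partial^2_{s^i}\mb{X}^i\,ds^i$, which integration by parts renders first order, and the genuinely second-order nonlocal term $\int_0^{s^i}a^i|\partial^2_{s^i}\mb{X}^i|^2\,ds^i$. The Fréchet derivative of the latter is, to leading order, the nonlocal operator $\mb{h}\mapsto\big(\int_0^{s}\phi\cdot\partial^2_{s}\mb{h}\,ds'\big)\partial_{s}\tilde{\mb{X}}$ with fixed $\phi=2\tilde a\,\partial^2_{s}\tilde{\mb{X}}\in h^{\varepsilon}$, and a naive estimate only gives boundedness $\mc{E}_1\to\mc{E}_{1/2}$, not smallness. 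To extract zero relative norm I would exploit that $C^\infty$ is dense in the little Hölder space $h^{\varepsilon}(S^1)$: splitting $\phi=\phi_\delta+(\phi-\phi_\delta)$ with $\phi_\delta$ smooth and $\|\phi-\phi_\delta\|_{h^\varepsilon}$ arbitrarily small, the smooth part is integrated by parts to a genuinely first-order operator bounded by $\|\mb{h}\|_{\mc{E}_{1/2}}$, while the remainder is controlled by $\delta\|\mb{h}\|_{\mc{E}_1}$; combined with (\ref{interpolation}) this gives the relative zero norm of $\mathscr{F}'_2(\tilde{\mb{X}}):\mc{E}_1\to\mc{E}_0$.

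Finally, part (3) is immediate: $\mathscr{F}=\mathscr{F}_0+\mathscr{F}_1+\mathscr{F}_2$ is $C^1$ as a sum of $C^1$ maps, and $\mathscr{F}'(\tilde{\mb{X}})=\mathscr{F}'_0(\tilde{\mb{X}})+\big(\mathscr{F}'_1(\tilde{\mb{X}})+\mathscr{F}'_2(\tilde{\mb{X}})\big)$ is the sum of an operator in $\mathcal{M}(\mathcal{E}_1,\mathcal{E}_0)$ and a perturbation of zero relative norm. Since this class is closed under such perturbations (\cite[Lemma 2.5]{Angenent1990}), we conclude $\mathscr{F}'(\tilde{\mb{X}})\in\mathcal{M}(\mathcal{E}_1,\mathcal{E}_0)$.
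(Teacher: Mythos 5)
Your overall architecture coincides with the paper's: isolate the block-diagonal second-order part of $\mathscr{F}'_0(\tilde{\mb{X}})$, apply Proposition~\ref{maximalregularity}(2) to it, show every remaining contribution to $\mathscr{F}'(\tilde{\mb{X}})$ has zero relative norm, and invoke closedness of $\mathcal{M}(\mathcal{E}_1,\mathcal{E}_0)$ under such perturbations. Parts (1) and (3) and the treatment of $\mathscr{F}_1$ match the paper's proof essentially line for line (the paper simply writes out the lower-order operator $\tilde{\mathcal B}$ and the derivatives $\gamma^{ij\,\prime}_{\mb{X}^i}$, $\gamma^{ij\,\prime}_{\mb{X}^j}$ explicitly and observes boundedness from $E_{1/2}$ into $E_0$). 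You also correctly identify the one genuinely delicate point, namely that $\mathscr{F}'_2(\tilde{\mb{X}})$ contains the nonlocal second-order term coming from $\int_0^{s}a\,|\partial^2_{s}\mb{X}|^2\,ds'$, for which mere boundedness $\mc{E}_1\to\mc{E}_{1/2}$ is not enough to conclude smallness.

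Where you diverge is in how that smallness is extracted. The paper introduces an auxiliary H\"older scale $E'_k=h^{2k+\varepsilon'}$ with $0<\varepsilon'<\varepsilon$ and shows that $\mb{X}\mapsto\psi(\mb{X})=\kappa\,v_N$ is $C^1$ from $E'_1$ into $h^{\varepsilon'}(S^1)$; since $v_T$ (hence $\alpha$) is an $s$-antiderivative of $\psi$ minus its mean, it gains one derivative and lands in $h^{1+\varepsilon'}(S^1)\hookrightarrow h^{\varepsilon}(S^1)$, so $\mathscr{F}'_2(\tilde{\mb{X}})$ is bounded from the strictly intermediate interpolation space $E'_1$ into $E_0$, and (\ref{interpolation}) immediately yields zero relative norm on $\mc{E}_1$. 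You instead keep the exponent $\varepsilon$ fixed and use density of $C^\infty$ in the little H\"older space $h^{\varepsilon}(S^1)$ to split the coefficient $\phi=2\tilde a\,\partial^2_s\tilde{\mb{X}}$, integrating the smooth part by parts (first-order, hence small after interpolation) and absorbing the rough remainder into $\delta\Vert\mb{h}\Vert_{\mc{E}_1}$ via a sup-norm bound on the antiderivative. Both arguments are correct and rest on the same two ingredients (the gain of one derivative under $\int_0^s$, and the littleness of the H\"older space); the paper's version is shorter and handles all of $\alpha^i$'s dependence at once through the single map $\psi$, while yours is more explicit about exactly which term obstructs a naive estimate and why littleness of $h^{\varepsilon}$ is indispensable. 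Either route completes the proof.
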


\begin{proof}

Let $\tilde{\mb{X}}\in E_{1/2}$. Denote $\tilde{s}$ the unit arc-length parametrization of the curve $\tilde{\Gamma}=\{\tilde{\mb{X}}(u), u\in [0,1]\}$. Then $d\tilde{s} = \tilde{g}(u) du$ where $\tilde{g}(u)=|\partial_u\tilde{\mb{X}}|$. The derivative of the local length $g=|\partial_u\mb{X}|$ at the point $\tilde{\mb{X}}\in \mc{E}_{1/2}$ in a direction $\mb{X}\in \mc{E}_{1/2}$ is given by $g'(\tilde{\mb{X}}) \mb{X} = \partial_{\tilde{s}}\tilde{\mb{X}}\cdot \partial_u \mb{X} $. As a consequence, the derivative of the total length functional $L(\Gamma) = \int_\Gamma ds = \int_0^1 |\partial_u\mb{X}| du$ in the direction $\mb{X}\in E_{1/2}$ is given by $L'(\tilde{\Gamma}) \mb{X} = \int_0^1 \partial_{\tilde{s}}\tilde{\mb{X}}\cdot \partial_u \mb{X} du$.

To prove statement 1), we note that the linearization $\mathscr{F}^\prime_0(\tilde{\mb{X}}) \mb{X}$ at the point $\tilde{\mb{X}}$ in the direction $\mb{X}$ has the form:
\[
\mathscr{F}^{i\, \prime}_0(\tilde{\mb{X}}) \mb{X}
=  L(\tilde{\Gamma}^i)^{-2} {\mathcal A}(\tilde{a}^i,\tilde{b}^i,\partial_{\tilde{s}^i}\tilde{\mb{X}}^i) \partial^2_{u} \mb{X}^i + \tilde{\mathcal B} [\mb{X}^i],\ i=1,\ldots, n,
\]
where the linear operator $\tilde{\mathcal B}$ represents lower order terms with respect to differentiation. Namely,
\begin{eqnarray*}
\tilde{\mathcal B} [\mb{X}^i]
&=& L(\tilde{\Gamma}^i)^{-2}
\biggl(
\nabla_{a^i} \tilde{\mathcal A}
\left[
\nabla_{\mb{X}^i}{\tilde{a}^i}\, \mb{X}^i
+
\nabla_{\mb{T}^i}{\tilde{a}^i} \, \partial_{s^i} \mb{X}^i
\right]
+ \nabla_{b^i} \tilde{\mathcal A}
\left[
\nabla_{\mb{X}^i}{\tilde{b}^i} \, \mb{X}^i
+
\nabla_{\mb{T}^i}{\tilde{b}^i} \, \partial_{s^i} \mb{X}^i
\right]
\\
&& + \nabla_{\mb{T}^i} \tilde{\mathcal A} \, \partial_{s^i} \mb{X}^i
- 2  L(\tilde{\Gamma}^i)^{-1} L^\prime(\tilde{\Gamma}^i) \mb{X}^i {\mathcal A}
\biggr) \partial^2_{u} \tilde{\mb{X}}^i,
\end{eqnarray*}
where the coefficients $a^i, b^i$, the mapping $\tilde{\mathcal{A}}$, and their first derivatives are evaluated at $\tilde{\mb{X}}^i$.
With regard to assumption made on coefficients $\tilde{a}^i=a^i(\tilde{\mb{X}}^i, \tilde{\mb{T}}^i)$ and $\tilde{b}^i=b^i(\tilde{\mb{X}}^i, \tilde{\mb{T}}^i)$ we conclude that the lower order linear operator $\tilde{\mathcal B}$ is a bounded linear operator from the Banach space $E_{1/2}$ into $E_0$. As a consequence, it has the zero relative norm if considered as a mapping from $E_1$ into $E_0$.

Since $L(\tilde{\Gamma}^i)>0$ is a positive constant then, according to Proposition~\ref{maximalregularity}, part 2), the linear operator $L(\tilde{\Gamma}^i)^{-2} {\mathcal A}(\tilde{a}^i,\tilde{b}^i,\tilde{\mb{T}}^i) \partial^2_u$  belongs to the maximal regularity class ${\mathcal M}(E_1, E_0)$ on the time interval $[0,T]$. Therefore, the linearization $\mathscr{F}^{i\, \prime}_0(\tilde{\mb{X}})$ belongs to the maximal regularity class ${\mathcal M}(E_1, E_0)$ because the class ${\mathcal M}(E_1, E_0)$ is closed with respect to perturbation with relative zero norm (c.f.  \cite[Lemma 2.5]{Angenent1990}, DaPrato and Grisvard \cite{daprato}, Lunardi \cite{Lunardi1984}). Hence, $\mathscr{F}^\prime_0(\tilde{\mb{X}})$ belongs to the maximal regularity pair $\mathcal{M}(\mathcal{E}_1, \mathcal{E}_0)$, as claimed.

In order to prove part 2) of the proposition, we first evaluate the derivative of the nonlocal function $\gamma^{ij}$ at the point $\tilde{\mb{X}}^i$ in the direction $\mb{X}^i$. We have
\[
\gamma^{ij\, \prime}_{\mb{X}^i}(\tilde{\mb{X}}^i, \tilde{\Gamma}^j) \mb{X}^i
= \int_{\tilde{\Gamma}^j} \left(
\tilde{f}^{ij\, \prime}_{\mb{X}^i} \mb{X}^i
+
\tilde{f}^{ij\, \prime}_{\mb{T}^i}
\left[
L(\tilde{\Gamma}^i)^{-1} \partial_u \mb{X}^i -  ( L(\tilde{\Gamma}^i)^{-2} L^\prime(\tilde{\Gamma}^i) \mb{X}^i ) \partial_u \tilde{\mb{X}}^i
\right]
\right) d\tilde{s}^j,
\]
\begin{eqnarray*}
\gamma^{ij\, \prime}_{\mb{X}^j}(\tilde{\mb{X}}^i, \tilde{\Gamma}^j) \mb{X}^j
&=& \int_{\tilde{\Gamma}^j} \left(
\tilde{f}^{ij\, \prime}_{\mb{X}^j} \mb{X}^j
+
\tilde{f}^{ij\, \prime}_{\mb{T}^j}
\left[
L(\tilde{\Gamma}^j)^{-1} \partial_u \mb{X}^j -  ( L(\tilde{\Gamma}^j)^{-2} L^\prime(\tilde{\Gamma}^j) \mb{X}^j ) \partial_u \tilde{\mb{X}}^j
\right]
\right) d\tilde{s}^j
\\
&& +
\int_{\tilde{\Gamma}^j} \tilde{f}^{ij} L(\tilde{\Gamma}^j)^{-1} L^\prime(\tilde{\Gamma}^j) \mb{X}^j d\tilde{s}^j.
\end{eqnarray*}
Here we have used the fact that the directional derivative of the tangent vector $\mb{T}^i = \partial_{s^i} \mb{X}^i = L(\Gamma^i)^{-1} \mb{X}^i$ in the direction $ \mb{X}^i$ is given by $L(\tilde{\Gamma}^i)^{-1} \partial_u \mb{X}^i -  ( L(\tilde{\Gamma}^i)^{-2} L^\prime(\tilde{\Gamma}^i) \mb{X}^i ) \partial_u \tilde{\mb{X}}^i$. It means that that the mapping $\gamma^{ij}$ is $C^1$ differentiable as a mapping from $E_{1/2}\times E_{1/2} \to \mathbb{R}$ and its derivative is a bounded linear operator from $E_{1/2}\times E_{1/2}$ into $\mathbb{R}$. Hence the linearization  $\mathscr{F}^\prime_1(\tilde{\mb{X}})$ is a bounded linear operator from the Banach space $\mathcal{E}_{1/2}$ into $\mathcal{E}_0$.

Finally, let us consider the tangential part $\mathscr{F}_2(\mb{X})$  where $\mathscr{F}^i_2(\mb{X}^i) = \alpha^i \partial_{s^i}\mb{X}^i, i=1,\ldots, n$. Recall that the uniform tangential redistribution $\alpha^i = v^i_T-\mb{F}^i \cdot \mb{T}^i$ is computed from the equation $\partial_{s^i} v^i_T= \kappa^i v^i_N  -\frac{1}{L(\Gamma^i)} \int_{\Gamma^i} \kappa v^i_N ds^i$, see (\ref{unifalpha}). Let us denote the auxiliary function $\psi(\mb{X}^i) = \kappa^i v^i_N$. Since $v^i_N = a^i \kappa^i + \mb{F}^i \cdot \mb{N}^i$ then using the Frenet-Serret formula
$\partial^2_{s^i} \mb{X}^i = \partial_{s^i} \mb{T}^i =  \kappa^i \mb{N}^i$ and the fact that $d s^i = L(\Gamma^i) du$ we obtain
\[
\psi(\mb{X}^i) = a^i (\kappa^i)^2 + \mb{F}^i \cdot  \kappa^i \mb{N}^i =  a^i |\partial^2_{s^i} \mb{X}^i|^2 + \mb{F}^i \cdot \partial^2_{s^i} \mb{X}^i
=
L(\Gamma^i)^{-2}\left(
a^i |\partial^2_{u} \mb{X}^i|^2 + \mb{F}^i \cdot \partial^2_{u} \mb{X}^i
\right).
\]
Let $0<\varepsilon^\prime < \varepsilon$ and $E^\prime_k = h^{2k +\varepsilon^\prime}(S^1)\times h^{2k +\varepsilon^\prime}(S^1) \times h^{2k +\varepsilon^\prime}(S^1)$. Clearly, $E^\prime_k \hookrightarrow E_k$ and $E^\prime_1$ is an interpolation space between $E_0$ and $E_1$. The mapping $\psi: E^\prime_1 \to E^\prime_0$ is $C^1$ differentiable and its derivative $\psi^\prime(\tilde{\mb{X}^i})$ is a bounded linear operator from  $E^\prime_1$ to $h^{\varepsilon^\prime}(S^1)$. As a consequence, the mapping $\mb{X}^i \mapsto  \kappa^i v^i_N  -\frac{1}{L(\Gamma^i)} \int_{\Gamma^i} \kappa^i v^i_N ds^i $ is $C^1$  differentiable as a mapping from the Banach space $E^\prime_1$ into $h^{\varepsilon^\prime}(S^1)$. Since the total velocity $v^i_T$ is an integral of this mapping we obtain $\mb{X}^i \mapsto v^i_T$ as well as  $\mb{X}^i \mapsto \alpha^i =  v^i_T-\mb{F}^i \cdot \partial_{s^i} \mb{X}^i$ is $C^1$ differentiable as a mapping from the space $E^\prime_1$ into $h^{1+\varepsilon^\prime}(S^1) \hookrightarrow h^{\varepsilon}(S^1)$. Hence the mapping $\mathscr{F}^i_2$ (now considered as a mapping from $E_1$ into $E_0$) is $C^1$ differentiable and its linearization $\mathscr{F}^{i\prime}_2(\tilde{\mb{X}})$ has zero relative norm.

The last statement 3) of the proposition now follows as the class ${\mathcal M}(E_1, E_0)$ is closed with respect to perturbations with relative zero norm (c.f. Angenent \cite{Angenent1990, Angenent1990b}, DaPrato and Grisvard \cite{daprato}, Lunardi \cite{Lunardi1984}).
\end{proof}

Now we can state the following result on local existence, uniqueness and continuation of solutions.

\begin{theorem}\label{theo-main}
Assume the hypothesis (\ref{smootheness}) and $\alpha^i, i=1, \ldots, n$, is the tangential velocity preserving the relative local length. Assume the parametrization $\mb{X}_0\equiv (\mb{X}^i_0)_{i=1}^n,$ of initial curves $\Gamma^i_0$ belongs to the H\"older space $\mc{E}_1$,  and it is uniform parametrization, i.e. $|\partial_u\mb{X}^i_0(u)| = L(\Gamma^i_0) >0$ for all $u\in I$ and $i=1,\ldots, n$. Assume the functions $a^i, b^i, \mb{F}^i, f^{ij}$ satisfy the assumptions (\ref{smootheness}).

Then there exists $T>0$ and the unique family of curves  $\{\Gamma^i_t, t\in[0,T]\}, i=1,\ldots, n$, evolving in 3D according to the system of nonlinear nonlocal geometric equations:
\begin{equation}
\partial_t\mb{X}^i = a^i \partial^2_{s^i} \mb{X}^i + b^i (\partial_{s^i}\mb{X}^i\times\partial^2_{s^i} \mb{X}^i)  + \mb{F}^i +\alpha^i \mb{T}^i, \quad i=1,\ldots, n,
\label{govequation}
\end{equation}
such that their parametrization satisfies $\mb{X}=(\mb{X}^i)_{i=1}^n \in  C([0,T], \mc{E}_1) \cap C^1([0,T], \mc{E}_0)$, and $\mb{X}(\cdot, 0)= \mb{X}_0$. Furthermore, if the maximal time of existence $T_{max}<\infty$ is finite then
\[
\uplim_{t\to T_{max}} \max_{i, \Gamma^i_t} |\kappa^i(\cdot, t)| = \infty.
\]

\end{theorem}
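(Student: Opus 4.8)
The plan is to recast the governing system (\ref{govequation}) as the abstract Cauchy problem (\ref{abstratF}), $\partial_t\mb{X}+\mathscr{F}(\mb{X})=0$, $\mb{X}(0)=\mb{X}_0$, for the total mapping $\mathscr{F}=\mathscr{F}_0+\mathscr{F}_1+\mathscr{F}_2$ on the open set $\mathcal{O}=\{\mb{X}\in\mc{E}_1: g^i=|\partial_u\mb{X}^i|>0,\ i=1,\dots,n\}$, and then to invoke the abstract theory of nonlinear analytic semi-flows of Angenent \cite{Angenent1990,Angenent1990b} (see also DaPrato and Grisvard \cite{daprato}, Lunardi \cite{Lunardi1984}). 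Proposition \ref{Fder} supplies exactly the two structural ingredients that theory requires: $\mathscr{F}$ is $C^1$ from $\mc{E}_1$ into $\mc{E}_0$, and at every $\tilde{\mb{X}}\in\mathcal{O}$ its Fr\'echet derivative $\mathscr{F}'(\tilde{\mb{X}})$ lies in the maximal regularity class $\mathcal{M}(\mc{E}_1,\mc{E}_0)$. Since the initial datum $\mb{X}_0$ is uniformly parametrized with $L(\Gamma^i_0)>0$, it belongs to $\mathcal{O}$, so Angenent's theorem yields a unique maximal classical solution $\mb{X}\in C([0,T_{max}),\mc{E}_1)\cap C^1([0,T_{max}),\mc{E}_0)$ with $\mb{X}(\cdot,0)=\mb{X}_0$, depending continuously on the data. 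Restricting to $[0,T]$ with $T<T_{max}$ gives the asserted local existence and uniqueness.

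For the continuation criterion I would use the blow-up alternative attached to the abstract theory: if $T_{max}<\infty$, then the orbit $\{\mb{X}(t):t\in[0,T_{max})\}$ cannot be contained in any compact subset of the open domain $\mathcal{O}$. I would prove the stated dichotomy by contradiction, assuming $T_{max}<\infty$ together with $M:=\uplim_{t\to T_{max}}\max_i\|\kappa^i(\cdot,t)\|_{L^\infty(\Gamma^i_t)}<\infty$, and showing that this forces the orbit to remain in a compact subset of $\mathcal{O}$ --- the required contradiction. The problem thus reduces to converting a uniform bound on the curvature into precompactness in $\mc{E}_1$.

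The first stage extracts elementary geometric bounds. Because $\alpha^i$ preserves the relative local length, the uniform parametrization gives $ds^i=L(\Gamma^i_t)\,du$ and $g^i=L(\Gamma^i_t)$, so that $|\partial_u\mb{X}^i|=L^i$ and $|\partial^2_u\mb{X}^i|=(L^i)^2\kappa^i$. Under hypothesis (\ref{smootheness}), once $\kappa^i$ is bounded by $M$ the scalar velocities $v^i_N=a^i\kappa^i+\mb{F}^i\cdot\mb{N}^i$, $v^i_B=b^i\kappa^i+\mb{F}^i\cdot\mb{B}^i$ and $v^i_T=\mb{F}^i\cdot\mb{T}^i+\alpha^i$ are controlled linearly in $\max_i\|\mb{X}^i\|_\infty$ and in the lengths $L^i$ (the nonlocal terms $\gamma^{ij}$ and the redistribution $\alpha^i$ being estimated, through their defining integrals, in terms of $M$ and the $L^j$). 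A simultaneous Gr\"onwall argument for the positions, whose speed is $|v^i_N|+|v^i_B|+|v^i_T|$, and for the lengths, governed by $\frac{d}{dt}L(\Gamma^i)=-\int_{\Gamma^i}\kappa^i v^i_N\,ds^i$ from Proposition \ref{prop-global}, then yields $\max_i\|\mb{X}^i(\cdot,t)\|_\infty\le R$ and $0<\underline{L}\le L(\Gamma^i_t)\le\overline{L}<\infty$ on all of $[0,T_{max})$. Combining these length bounds with the curvature bound gives a uniform bound of $\mb{X}(t)$ in $(C^2(S^1))^{3n}$, while $L^i\ge\underline{L}>0$ keeps the orbit inside $\mathcal{O}$.

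The second stage --- and the genuine obstacle --- is to upgrade this $C^2$ bound to precompactness in $\mc{E}_1=(h^{2+\varepsilon})^{3n}$, since a $C^2$ bound controls neither the $\varepsilon$-H\"older modulus of $\partial^2_u\mb{X}^i$ nor compactness in $h^{2+\varepsilon}$. Here I would exploit the parabolic smoothing inherent in the maximal regularity of $\mathscr{F}'$: representing the solution by the variation-of-constants formula for the analytic semigroup generated by the frozen-coefficient linearization, and estimating in $\mc{E}_0$ the inhomogeneity produced by $\mathscr{F}_1+\mathscr{F}_2$ together with the coefficient-freezing commutator, one bootstraps the uniform $\mc{E}_0$-bound into a uniform bound in the slightly smaller space $(h^{2+\varepsilon'})^{3n}$ for some $\varepsilon<\varepsilon'<1$, the interpolation inequality (\ref{interpolation}) being used to absorb the lower-order and nonlocal contributions; this is the three-dimensional analogue of the scheme carried out for planar interacting curves in \cite{BKS2020}. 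Since the embedding $(h^{2+\varepsilon'})^{3n}\hookrightarrow\mc{E}_1$ is compact, the orbit is precompact and, by the length bound, contained in a compact subset of $\mathcal{O}$, contradicting the blow-up alternative. Therefore $T_{max}<\infty$ forces $\uplim_{t\to T_{max}}\max_i\|\kappa^i(\cdot,t)\|_{L^\infty(\Gamma^i_t)}=\infty$. I expect the delicate part of this last step to be the uniformity of the Schauder/maximal-regularity constants as $t\to T_{max}$ --- ensuring the bootstrap constants do not degenerate with the frozen-coefficient point --- which is precisely what makes the estimate global on $[0,T_{max})$ rather than merely local in time.
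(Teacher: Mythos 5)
Your proof of local existence and uniqueness is essentially the paper's: both reduce (\ref{govequation}) to the abstract problem (\ref{abstratF}), use Proposition~\ref{Fder} to check that $\mathscr{F}=\mathscr{F}_0+\mathscr{F}_1+\mathscr{F}_2$ is $C^1$ with $\mathscr{F}'(\tilde{\mb{X}})\in\mathcal{M}(\mc{E}_1,\mc{E}_0)$, and invoke Angenent's abstract existence theorem. For the continuation criterion you take a genuinely different route. The paper argues directly: assuming $T_{max}<\infty$ and bounded curvatures, it reads (\ref{govequation}) as a \emph{linear} nonautonomous equation $\partial_t\mb{X}^i=\tilde{\mc{A}}^i\partial^2_{s^i}\mb{X}^i+\tilde{g}^i$ whose coefficient matrix and forcing are asserted to extend continuously to the closed interval $[0,T_{max}]$ with values in $\mc{E}_{1/2}$ and $E_0$; linear maximal regularity then yields $\mb{X}\in C([0,T_{max}],\mc{E}_1)$, so the solution restarts at $t=T_{max}$, a contradiction. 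You instead invoke the blow-up alternative (the orbit cannot remain in a compact subset of the domain $\{g^i>0\}$) and aim to establish precompactness of the orbit in $\mc{E}_1$ by Gr\"onwall bounds on positions and lengths followed by a variation-of-constants smoothing bootstrap into $h^{2+\varepsilon'}$ with $\varepsilon'>\varepsilon$. Your first stage is sound and in fact makes explicit something the paper leaves implicit, namely that $L(\Gamma^i_t)$ stays bounded away from $0$ and $\infty$ so the orbit does not leave the open domain. Your second stage is exactly where the two proofs share their real analytic content: the uniform H\"older control your bootstrap is meant to produce is what the paper asserts when it places $\tilde{g}^i$ in $C([0,T_{max}],E_0)$ and $\tilde{\mc{A}}^i$ in $C([0,T_{max}],\mc{E}_{1/2})$, and neither text carries the estimate out in detail. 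The paper's route is more economical, since continuity of $\mb{X}$ up to $t=T_{max}$ in $\mc{E}_1$ suffices and no orbit precompactness is needed; your route, if completed, gives a stronger compactness conclusion but must control the smoothing constants uniformly as the freezing point approaches $T_{max}$, as you note. One concrete caution for your bootstrap: the frozen-coefficient commutator $(\mc{A}(t_0)-\mc{A}(t))\partial_u^2\mb{X}$ is only small in $C^0$, not in $h^{\varepsilon''}$, when all you have is a sup-norm bound on $\partial_u^2\mb{X}$; you will need the interpolation inequality (\ref{interpolation}) to split off and absorb its highest-order part, exactly as in the perturbation argument behind Proposition~\ref{maximalregularity}.
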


\begin{proof}
The proof follows from the abstract result on existence and uniqueness of solutions to (\ref{govequation}) due to Angenent \cite{Angenent1990}. It is based on the linearization of the abstract evolution equation (\ref{abstratF})
\[
\partial_t \mb{X} + \mathscr{F}(\mb{X}) = 0, \quad \mb{X}(0) = \mb{X}_0,
\]
in the Banach space $\mc{E}_1$. With regard to Proposition~\ref{Fder}, for any $\tilde{\mb{X}}$, the linearization $\mathscr{F}'(\tilde{\mb{X}} )$ generates an analytic semigroup and it belongs to the maximal regularity class $\mc{M}(\mc{E}_1, \mc{E}_0)$ of linear operators from  the Banach space $\mathcal{E}_1$ into Banach space $\mathcal{E}_0$. The local existence and uniqueness of a solution $\mb{X}=(\mb{X}^i)_{i=1}^n \in  C([0,T], \mc{E}_1) \cap C^1([0,T], \mc{E}_0)$, and $\mb{X}(\cdot, 0)= \mb{X}_0$ now follows from the abstract result \cite[Theorem 2.7]{Angenent1990} due to  Angenent.

In order to prove the last statement we use a simple bootstrap argument. Suppose that the maximal time of existence is finite and $\max_{i, \Gamma^i_t} |\kappa^i(\cdot, t)|<\infty$. Then the solution $\mb{X}$ belongs to the space $C([0,T], \mc{E}_1) \cap C^1([0,T], \mc{E}_0)$ for any compact subinterval $[0,T]\subset [0, T_{max})$. Since $\kappa^i$ is bounded so does the second derivative $\partial^2_{s^i} \mb{X}^i = \partial_{s^i} \mb{T}^i = \kappa^i \mb{N}^i$. It means that the lower order terms in the governing equation are continuous and uniformly bounded. That is the function $\tilde{g}^i = \mb{F}^i +\alpha^i \mb{T}^i$ belongs to the space $C([0,T_{max}], E_0)$, and the solution $\mb{X}^i$ satisfies the linear evolution equation
\begin{equation}
\partial_t\mb{X}^i = \tilde{\mc{A}}^i \partial^2_{s^i} \mb{X}^i + \tilde{g}^i, \qquad \mb{X}^i(\cdot, 0) = \mb{X}^i_0 \in \mc{E}_1,
\label{linear}
\end{equation}
where $\tilde{\mc{A}}^i(\cdot, t) = \mc{A}(a^i(\cdot, t), b^i(\cdot, t),  \partial_{s^i} \mb{X}^i(\cdot, t))$ with $a^i(\cdot, t) =a^i(\mb{X}^i(\cdot, t), \mb{T}^i(\cdot, t))$ and $b^i(\cdot, t) =b^i(\mb{X}^i(\cdot, t), \mb{T}^i(\cdot, t))$ is a time dependent matrix belonging to the space $C([0,T_{max}], \mc{E}_{1/2})$. Applying the maximal regularity for the linear equation (\ref{linear})  we conclude that the solution $\mb{X}^i \in C([0,T_{max}], E_1) \cap C^1([0,T_{max}], E_0)$. It means that $\mb{X} \in C([0,T_{max}], \mc{E}_1) \cap C^1([0,T_{max}], \mc{E}_0)$, and so we can continue a solution beyond the maximal time of existence $[0, T_{max})$ starting from the initial condition $\mb{X}(\cdot, 0) = \mb{X}(\cdot, T_{max})\in \mc{E}_1$, a contradiction. Therefore $T_{max}=\infty$, as claimed, provided that the curvatures $\kappa^i, i=1,\ldots, n$, remain bounded on the maximal time of existence $[0, T_{max})$.
\end{proof}

\begin{remark}\label{gen-hypothesis}
The structural hypothesis (\ref{smootheness}) can be slightly relaxed in the case when the initial curves do not intersect each other. 

We assume there exist open non-intersecting neighborhoods ${\mathcal O}^i\in \mathbb{R}^3$ of  initial curves $\Gamma^i_0\subset {\mathcal O}^i, i=1,  \ldots, n$ such that  ${\mathcal O}^i\cap {\mathcal O}^j=\emptyset$ for $i\not= j$, and the following structural assumptions hold: 
\begin{equation}
\left\{
\begin{aligned}
 & a^i, b^i: {\mathcal O}^i\times \mathbb{R}^3 \to \mathbb{R}, \quad a^i\ge \underline{a} >0,
 \\
 & \mb{F}^i:{\mathcal O}^i\times \mathbb{R}^3 \times \mathbb{R}^n\to \mathbb{R}^3, \quad
 f^{ij}:{\mathcal O}^i\times \mathbb{R}^3 \times {\mathcal O}^j\times \mathbb{R}^3\to \mathbb{R}^3,
 \quad \text{for} \ i,j=1,\ldots, n,
\\
& \text{are $C^2$ smooth and globally Lipschitz continuous functions, $\underline{a}>0$ is a constant.}
\end{aligned}
\right.
\label{smootheness2}\tag{H'}
\end{equation}
If we replace the hypothesis (\ref{smootheness}) by its generalization (\ref{smootheness2}) then the local existence result stated in the main Theorem~\ref{theo-main} remains true except of the limiting behavior as $t\to T_{max}$ where the last statement in Theorem~\ref{theo-main} should be replaced as follows: either $\uplim_{t\to T_{max}} \max_{i, \Gamma^i_t} |\kappa^i(\cdot, t)| = \infty$, or $\lim_{t\to T_{max}} \min_{i} \hbox{dist} (\Gamma^i_t, \partial {\mathcal O}^i) = 0$. Here $\hbox{dist} (\Gamma^i_t, \partial {\mathcal O}^i)$ is the distance between $\Gamma^i_t$ and the boundary $\partial {\mathcal O}^i$ of the neighborhood ${\mathcal O}^i$. 

The hypothesis (\ref{smootheness2}) can be employed in examples involving flows of non-intersecting curves driven by normal and binormal velocity under the Biot-Savart law (\ref{biot-savart-force}).

\end{remark}

\section{Numerical discretization scheme based on the method of lines}\label{Sec:Num}

In this section we present a numerical discretization scheme for solving the system of equations (\ref{abstract-form-general}) enhanced by the tangential velocity $\alpha^i$. Our discretization scheme is based on the method of lines with the spatial discretization obtained by means of the finite volume method. For simplicity, we consider one evolving curve $\Gamma$ (omitting the curve index $i$) and  rewrite the abstract form of (\ref{abstract-form-general}) in terms of the principal parts of its velocity
\begin{equation}
    \label{eq:num_start}
    \partial_t \mb{X} = a \partial_s^2 \mb{X}
    + b(\partial_s \mb{X} \times \partial_s^2 \mb{X})
    + \mb{F}
    + \alpha \mb{T}.
\end{equation}
We place $M$ discrete nodes $\mb{x}_k = \mb{X}(u_k)$,  $k = 0,1,2, \ldots, M$ along the curve $\Gamma$. Corresponding dual nodes are defined as $\mb{x}_{k \pm \frac12} = \mb{X}(u_{k \pm \frac12})$  (see Figure \ref{FVMfig}). Here $ u_{k \pm \frac12} = u_k \pm \frac{h}2$, where $h=1/M$, and $(\mb{x}_k + \mb{x}_{k+1}) / 2$ denote averages on segments connecting nearby discrete nodes and differs from $\mb{x}_{k \pm \frac12} \in \Gamma$. The $k$-th segment $\mathcal{S}_k$ of $\Gamma$ between the nodes $\mb{x}_{k-1}$ and $\mb{x}_{k}$ represents the finite volume.
Integration of equation (\ref{eq:num_start}) over the segment of $\Gamma$ between the nodes $\mb{x}_{k + \frac12}$ and $\mb{x}_{k - \frac12}$ yields

\begin{equation}
\label{eq:num_int}
\begin{split}
\int_{u_{k-\frac12}}^{u_{k+\frac12}} \partial_t \mb{X} |\partial_u \mb{X}| d u = &
\int_{u_{k-\frac12}}^{u_{k+\frac12}} a\frac{\partial}{\partial_u} \left( \frac{\partial_u \mb{X}}{|\partial_u \mb{X}|} \right) d u
+
\int_{u_{k-\frac12}}^{u_{k+\frac12}}
b (\partial_s \mb{X} \times \partial_s^2 \mb{X}) |\partial_u \mb{X}| d u
 \\
+ & \int_{u_{k-\frac12}}^{u_{k+\frac12}} \mb{F} |\partial_u \mb{X}| d u
+ \int_{u_{k-\frac12}}^{u_{k+\frac12}} \alpha \partial_u \mb{X} d u.
\end{split}
\end{equation}
Let us denote $d_k = |\mb{x}_k - \mb{x}_{k-1}|$ for $k=1,2,\ldots,M,M+1$, where $\mb{x_0} = \mb{x}_M$ and $\mb{x}_1 = \mb{x}_{M+1}$ for closed curve $\Gamma$ and we approximate the integral expressions in (\ref{eq:num_int}) by means of the finite volume method along $\Gamma$ as follows:
\begin{equation}
\begin{split}
 \int_{u_{k-\frac12}}^{u_{k+\frac12}} \partial_t \mb{X} |\partial_u \mb{X}| d  u
& \approx \frac{d  \mb{x}_k}{d t} \frac{d_{k+1} + d_k}{2},
\\
 \int_{u_{k-\frac12}}^{u_{k+\frac12}}
a \partial_u \left( \frac{\partial_u \mb{X}}{|\partial_u \mb{X}|} \right) d u
&\approx
a_k \left(\frac{\mb{x}_{k+1} - \mb{x}_k}{d_{k+1}} - \frac{\mb{x}_k - \mb{x}_{k-1}}{d_k}\right),
\\
 \int_{u_{k-\frac12}}^{u_{k+\frac12}}
b (\partial_s \mb{X} \times \partial_s^2 \mb{X}) |\partial_u \mb{X}| d u
&\approx
b_k \frac{d_{k+1} + d_k}{2} \kappa_k (\mb{T}_k \times \mb{N}_k),
\\
\int_{u_{k-\frac12}}^{u_{k+\frac12}} \mb{F} |\partial_u \mb{X}| d u
&\approx  \mb{F}_k \frac{d_{k+1} + d_{k}}{2},
\\
\int_{u_{k-\frac12}}^{u_{k+\frac12}} \alpha \partial_u \mb{X} d u
&\approx  \alpha_k \frac{\mb{x}_{k+1} - \mb{x}_{k-1}}{2}.
\end{split}
\label{discretization}
\end{equation}
The approximation of the nonnegative curvature $\kappa$, tangent vector $\mb{T}$ and normal vector $\mb{N}$, $\kappa \mb{N} = \partial_s \mb{T}$ read as follows:

\begin{equation}
\begin{split}
\label{krivost}
\kappa_k & \approx  \left| \mb{T}_k \times \frac{2}{d_k + d_{k+1}}
\left(\frac{\mb{x}_{k+1} - \mb{x}_k}{d_{k+1}} - \frac{\mb{x}_k - \mb{x}_{k-1}}{d_k}\right)
\right| \\
& =
\left|\frac{2}{d_k + d_{k+1}}
\left(\frac{\mb{x}_{k+1} - \mb{x}_k}{d_{k+1}} - \frac{\mb{x}_k - \mb{x}_{k-1}}{d_k}\right)
\right|,
\\
\mb{T}_k & \approx \frac{\mb{x}_{k+1} - \mb{x}_{k-1}}{d_{k+1}+d_k}, \quad
\mb{N}_k \approx \kappa_k^{-1} \frac{2}{d_k + d_{k+1}}
\left(\frac{\mb{x}_{k+1} - \mb{x}_k}{d_{k+1}} - \frac{\mb{x}_k - \mb{x}_{k-1}}{d_k}\right).
\end{split}
\end{equation}
Here and hereafter, we assume $\partial_t \mb{X}, \partial_u \mb{X}, \mb{F}$ and $\alpha$ are constant over the finite volume between the nodes $\mb{x}_{k + \frac12}$ and $\mb{x}_{k - \frac12}$, taking values $\partial_t \mb{X}_k, \partial_u \mb{X}_k, \mb{F}_k$ and $\alpha_k$, respectively.
In approximation $\mb{F}_k$ of the nonlocal vector valued function $\mb{F}$, we assume the curve $\Gamma$ entering the definition of $\mb{F}$ is approximated by the polygonal curve with vertices $(\mb{x}_0, \mb{x}_1, \ldots, \mb{x}_M)$.
In order to find the approximation $\alpha_k$ of the tangential velocity given by equations (\ref{eq:alpha}) and (\ref{alpha-asymptotic}) we apply a simple integration rule and obtain the following formula
\begin{equation}
    \label{eq:alpha_num}
    \alpha_k \approx -\mb{F}_k \cdot \mb{T}_k + \mb{F}_0  \cdot \mb{T}_0 + \alpha_0
    +
    \sum_{j=1}^k \kappa_j v_{N,j} d_j -  \frac{\sum_{j=1}^k d_j}{L}
    \sum_{j=1}^M \kappa_j v_{N,j} d_j  + \omega  \sum_{j=1}^k \left( \frac{L}{M} - d_j \right),
\end{equation}
for $k=1,2, \ldots, M$, where $L=\sum_{j=1}^M d_j$ is the total length of the curve and $\omega\ge 0$ is a redistribution parameter. Here the discrete normal velocity $v_{N,j}$ is given by
\[
v_{N,j} = a\, \kappa_j + \mb{F}_j \cdot \mb{N}_j.
\]
The values  $\alpha_0=\alpha_M$ are chosen in such a way that $\sum_{j=1}^M  \alpha_j d_j =0$. If $\omega=0$ we obtain the uniform redistribution. If $\omega>0$ we obtain asymptotically uniform redistribution (see (\ref{alpha-asymptotic})). In summary, the semi-discrete scheme for solving (\ref{eq:num_start}) can be written as follows
\begin{eqnarray}
\frac{d  \mb{x}_k}{d t} \frac{d_{k+1}+d_k}{2}
&=&
a_k \left(\frac{\mb{x}_{k+1} - \mb{x}_k}{d_{k+1}} - \frac{\mb{x}_k - \mb{x}_{k-1}}{d_k}\right)
 + b_k \frac{d_{k+1}+d_k}{2} \kappa_k (\mb{T}_k \times \mb{N}_k)
\label{eq:DCScheme}
\\
&& + \mb{F}_k \frac{d_{k+1} + d_{k}}{2}
 + \alpha_k \frac{\mb{x}_{k+1} - \mb{x}_{k-1}}{2},
 \nonumber
\\
\mb{x}_k(0) &=& \mb{X}_{ini}(u_k), \quad \text{for} \ k = 1, \ldots, M.
\label{eq:DCScheme+1}
\end{eqnarray}
Resulting system (\ref{eq:DCScheme}--\ref{eq:DCScheme+1}) of ODEs  is solved numerically by means of the 4th-order explicit Runge-Kutta-Merson scheme with automatic time stepping control and the tolerance parameter $10^{-3}$ (see \cite{0965-0393-24-3-035003}). We chose the initial time-step as $4h^2$, where $h=1/M$ is the spatial mesh size.

\begin{figure}
\begin{center}
\includegraphics[width=0.4\textwidth]{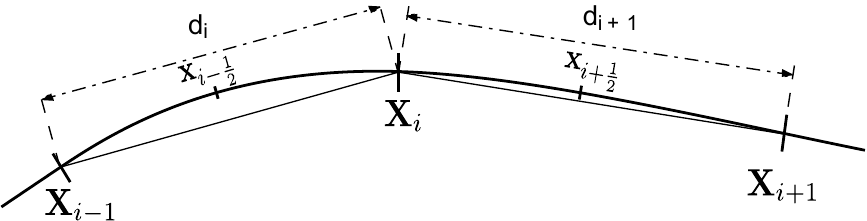}
\end{center}
\caption{Discretization of a segment of a curve by means of the flowing finite volumes.
}
\label{FVMfig}
\end{figure}

\begin{figure}
\begin{center}
\includegraphics[width=0.35\textwidth]{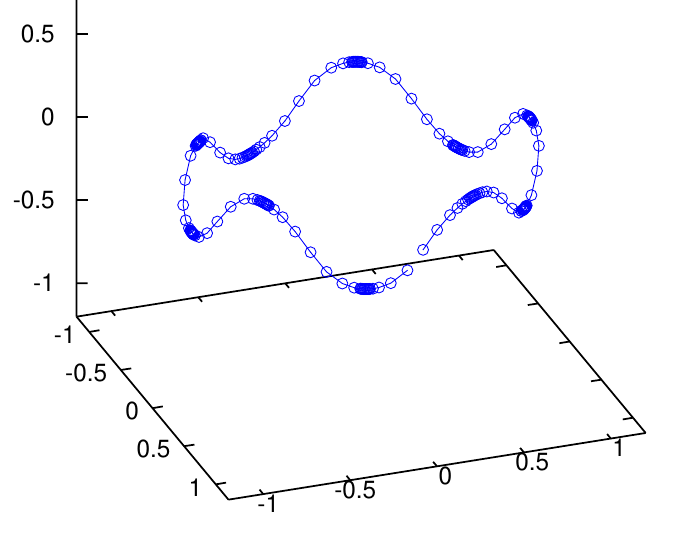}
\includegraphics[width=0.35\textwidth]{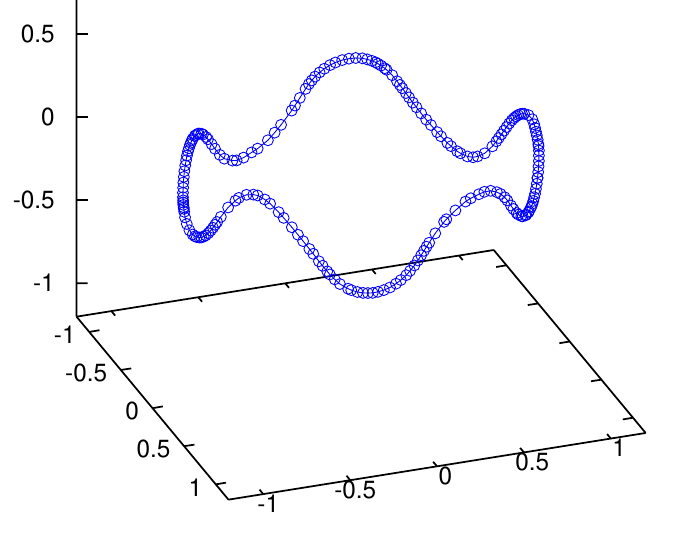}
\end{center}
\caption{Illustration of importance of a suitable choice of the tangential redistribution. Left: no tangential redistribution. Right: tangential redistribution preserving the relative local length (i.e. $\omega = 0$). }
\label{fig:redis}
\end{figure}

\section{Numerical results}

In this section we present several examples of evolution of interacting curves in 3D. Nonlocal interactions between curves are modeled by means of the Biot-Savart law. Subsection~\ref{normalbinormal} is devoted to the motion of interacting curves with a nontrivial normal velocity component $a^i>0$. We apply numerical scheme based on the finite volume approximation of spatial derivatives in combination with the method of lines. In subsection~\ref{binormalonly} we present examples of evolving interacting curves with the binormal velocity with $b^i=1, a^i=0$, and nonlocal interactions. The problem can be reduced to a solution of the system of ODEs and the solution can be represented in terms of evolving concentric circles.

\subsection{Computational examples of 3D curve dynamics under normal and binormal velocity}\label{normalbinormal}

Below, we describe computational examples performed by scheme (\ref{eq:DCScheme}-\ref{eq:DCScheme+1}) designed in Section \ref{Sec:Num}. The examples demonstrate mutual interaction of a pair of closed spatial curves moving according to the motion law (\ref{biot-savart}--\ref{biot-savart-force}) where the interaction force of the Biot-Savart type is used. The semi-discrete scheme (\ref{eq:DCScheme}-\ref{eq:DCScheme+1}) is solved by the fourth-order Runge-Kutta-Merson method with automatic time step control (see, e.g. as in \cite{0965-0393-24-3-035003}), with the tolerance $10^{-3}$.

\medskip

\noindent {\bf Example 1} shows the evolution of two mutually interacting curves - see Figure~\ref{fig:theory1}. Their initial shape is circular with a vertical sinusoidal perturbation, their barycenters are vertically in different planes and horizontally shifted. As it can be seen from the time evolution, the curves exhibit the "frog leap" dynamics (see \cite{Meleshko:12}) - the smaller curves moves vertically through the interior of the larger curve, becomes larger and the process repeats several times until one of them shrinks to a point as a consequence of the normal component of the flow. This example is set as follows - the flow parameters combining the normal and binormal directions are $a^{1,2} = 0.05$ and $b^{1,2} = 0.1$. The initial curves are parametrized as:
%
\begin{equation*}
\mb{X}^1(u,0) =
\left(
\begin{array}{c}
\cos(2 \pi u)+ 0.1  \\
\sin(2 \pi u)       \\
0.2 + 0.2 \sin(6 \pi u)
\end{array}
\right),
\quad
\mb{X}^2(u,0) =
\left(
\begin{array}{c}
       3 \cos(2 \pi u)  \\
 0.1 + 3 \sin(2 \pi u)  \\
-0.2 +0.2 \sin(12 \pi u)
\end{array}
\right),
\quad u \in (0,1).
\end{equation*}
The initial curves do not intersect each other. As an external forcing term we considere the Biot-Savart law (\ref{biot-savart-force}), i.e. we choose $\delta=0$ in (\ref{biot-savart-force-regularized}).
The spatial parametrization is discretized by $M = 100$ segments. The output time step was $\Delta t = 0.2$.

\medskip
\noindent {\bf Example 2} shows the evolution of two mutually interacting curves - see Figure~\ref{fig:theory2}. Their initial configuration consists of two circles in mutually perpendicular planes. In the time evolution, the curves become distorted by the mutual forces and move away each from other.
This example is set as follows - the flow parameters combining the normal and binormal directions are $a^{1,2} = 0.05$ and $b^{1,2} = 0.1$. The initial curves are parameterized as
%
\begin{equation*}
\mb{X}^1(u,0) =
\left(
\begin{array}{c}
2 \cos(2 \pi u)     \\
2 \sin(2 \pi u)     \\
0.0
\end{array}
\right),
\quad
\mb{X}^2(u,0) =
\left(
\begin{array}{c}
2 \sin(2 \pi u)  \\
3.0 \\
2 \cos(2 \pi u)
\end{array}
\right),
\quad u \in (0,1).
\end{equation*}
Again we considered the Biot-Savart law (\ref{biot-savart-force}) as an external forcing term.
The spatial parametrization is discretized by $M = 100$ segments. The output time step was $\Delta t = 0.2$.

\medskip

\noindent {\bf Example 3} shows the evolution of two mutually interacting curves - see Figure~\ref{fig:theory3}. Their initial shape is circular, their barycenters are vertically in different planes and horizontally shifted. In the time evolution, the curves exhibit acrobatic motion when the smaller curve squeezes into the interior of the larger one and loops over it repeatedly.
This example is set as follows - the flow parameters combining the normal and binormal directions are $a^{1,2} = 0.05$ and $b^{1,2} = 0.1$. The initial curves are parametrized as
%
\begin{equation*}
\mb{X}^1(u,0) =
\left(
\begin{array}{c}
\cos(2 \pi u)     \\
\sin(2 \pi u)     \\
0.0
\end{array}
\right),
\quad
\mb{X}^2(u,0) =
\left(
\begin{array}{c}
2 \cos(2 \pi u)  \\
0.5 + 2 \sin(2 \pi u)  \\
1.5
\end{array}
\right),
\quad u \in (0,1).
\end{equation*}
The parametric space is discretized by $M = 150$ segments. The output time step was $\Delta t = 0.2$. The numerical algorithm is stabilized by tangential redistribution.

\begin{figure}
\begin{center}
\includegraphics[width=0.49\textwidth, trim= 1cm 0 1cm  0]{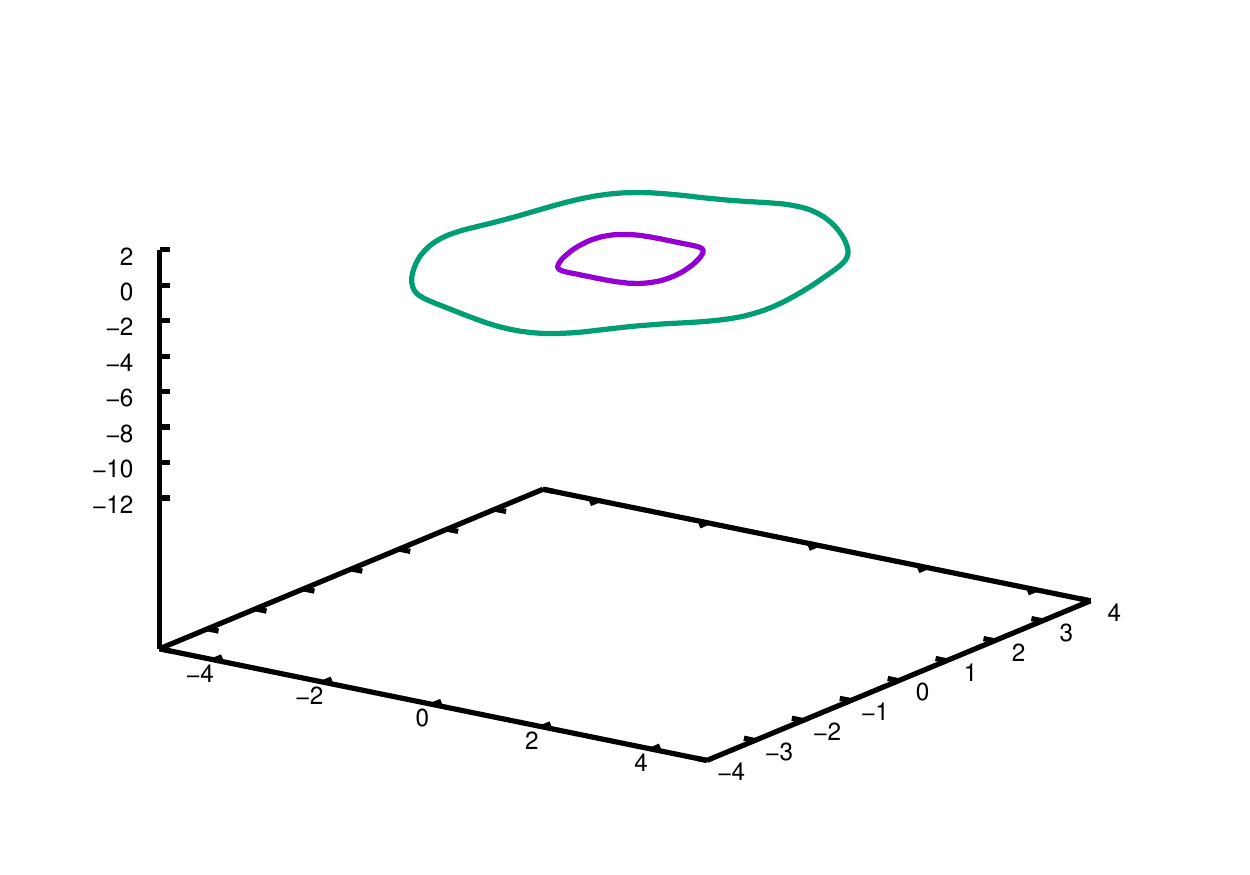}
\includegraphics[width=0.49\textwidth, trim= 1cm 0 1cm  0]{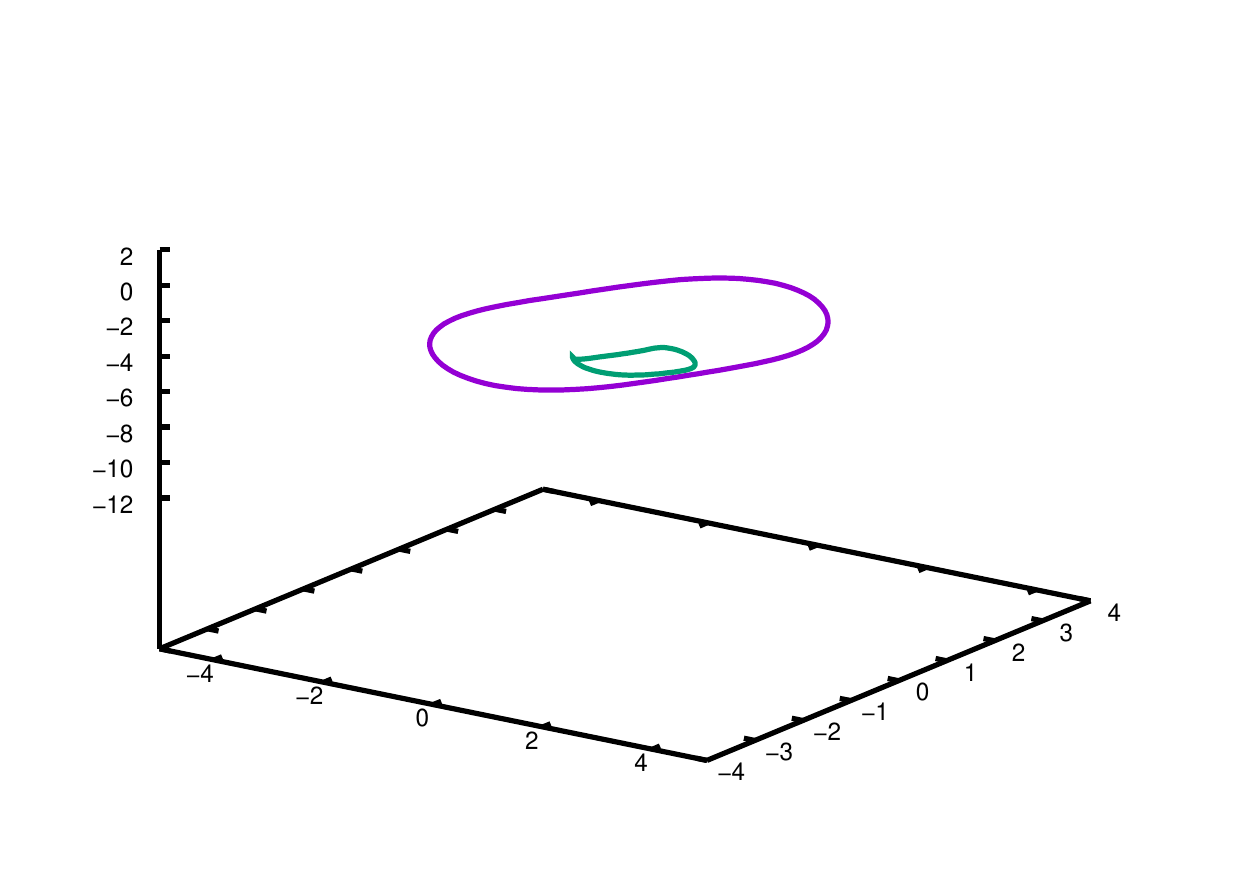}\\
\centerline{\small (a) time $t = 0.0$ \hspace{0.25\textwidth} (b) time $t = 9.0$}
\vspace{-5mm}
\includegraphics[width=0.49\textwidth, trim= 1cm 0 1cm  0]{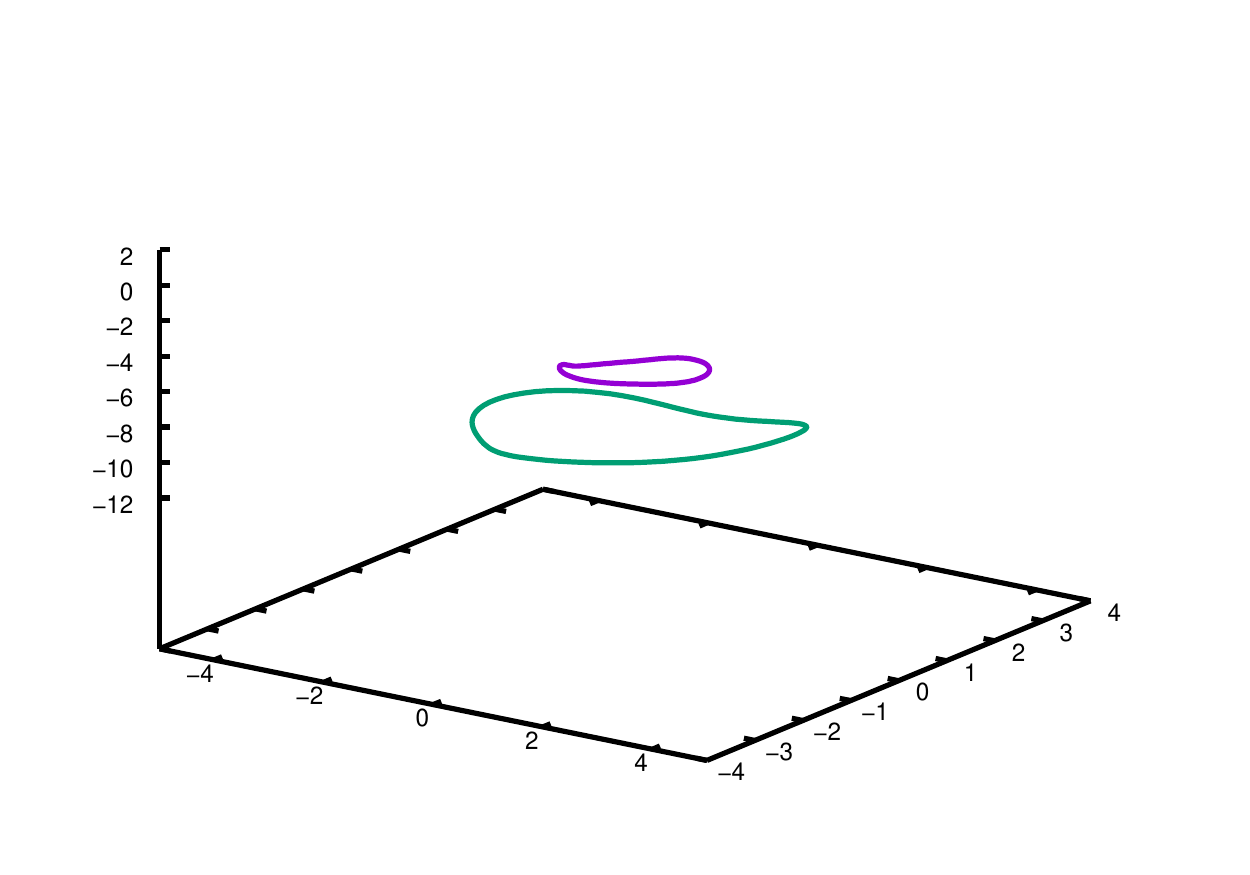}
\includegraphics[width=0.49\textwidth, trim= 1cm 0 1cm  0]{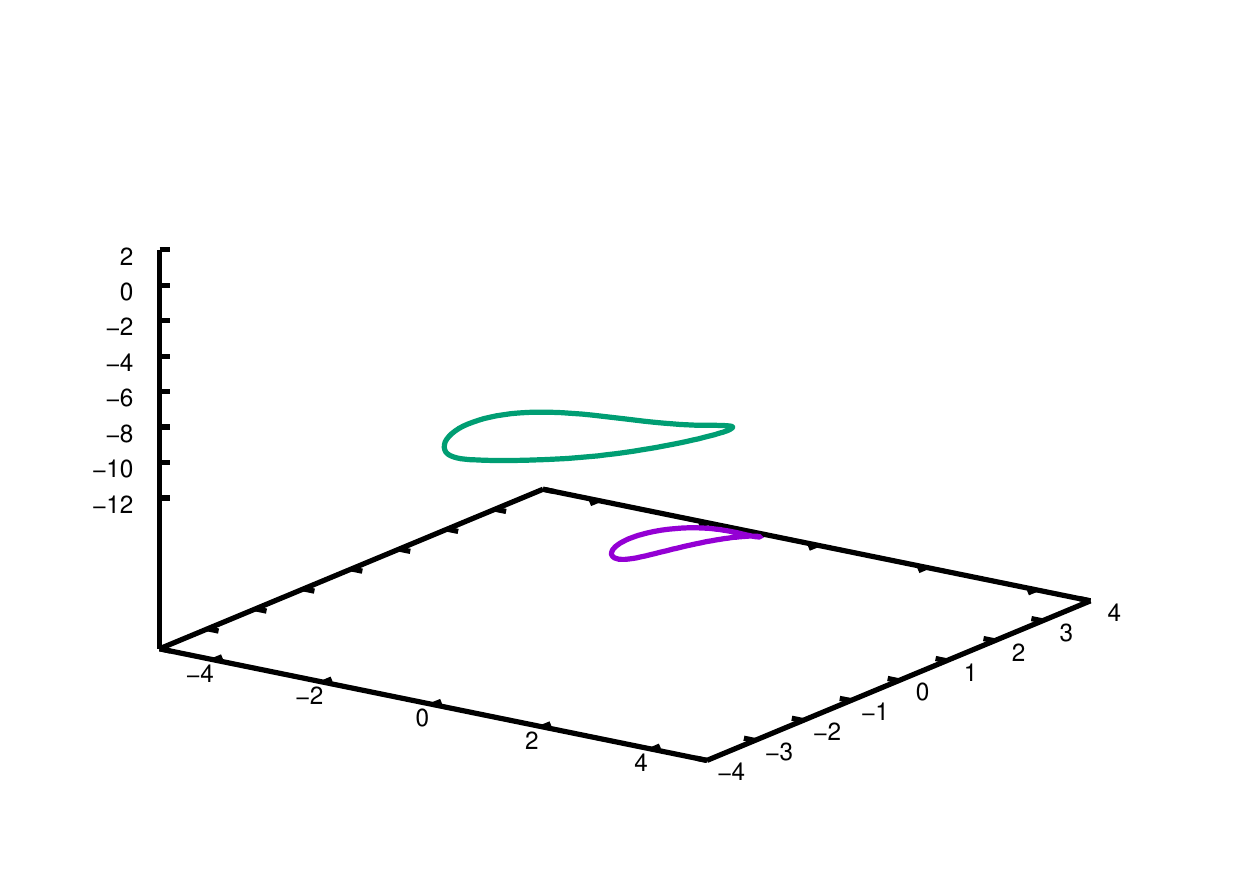}\\
\centerline{\small (c) time $t = 18.0$ \hspace{0.25\textwidth} (d) time $t = 27.0$}
\vspace{-5mm}
\includegraphics[width=0.49\textwidth, trim= 1cm 0 1cm  0]{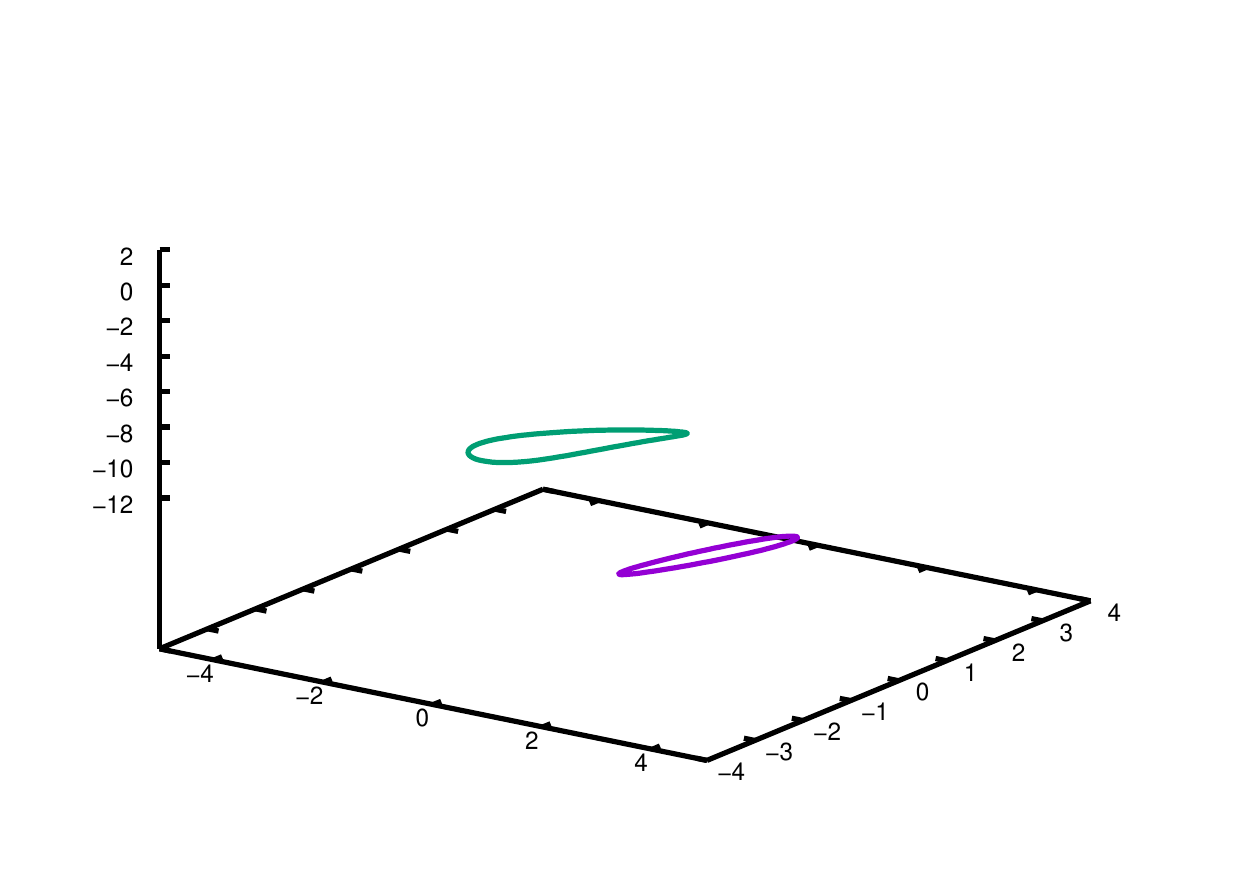}
\includegraphics[width=0.49\textwidth, trim= 1cm 0 1cm  0]{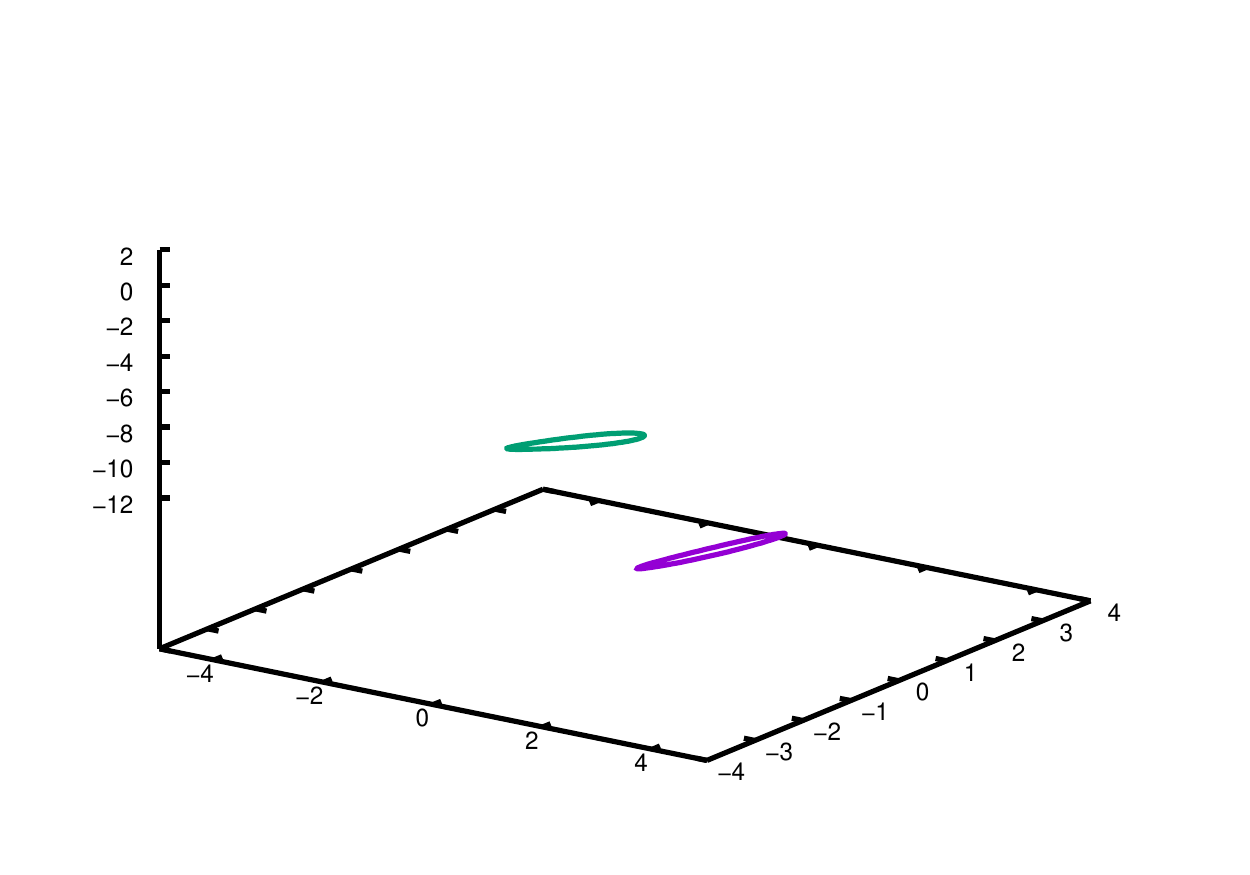}\\
\centerline{\small (e) time $t = 36.0$ \hspace{0.25\textwidth} (f) time $t = 45.0$}
\caption{Example 1: Evolution of space curves with the Biot-Savart type of interaction starting from two vertically perturbed circles showing the "frog leap" dynamics.}

\label{fig:theory1}
\end{center}
\end{figure}

\newpage

\begin{figure}
\begin{center}
\includegraphics[width=0.49\textwidth, trim= 1cm 0 1cm  0]{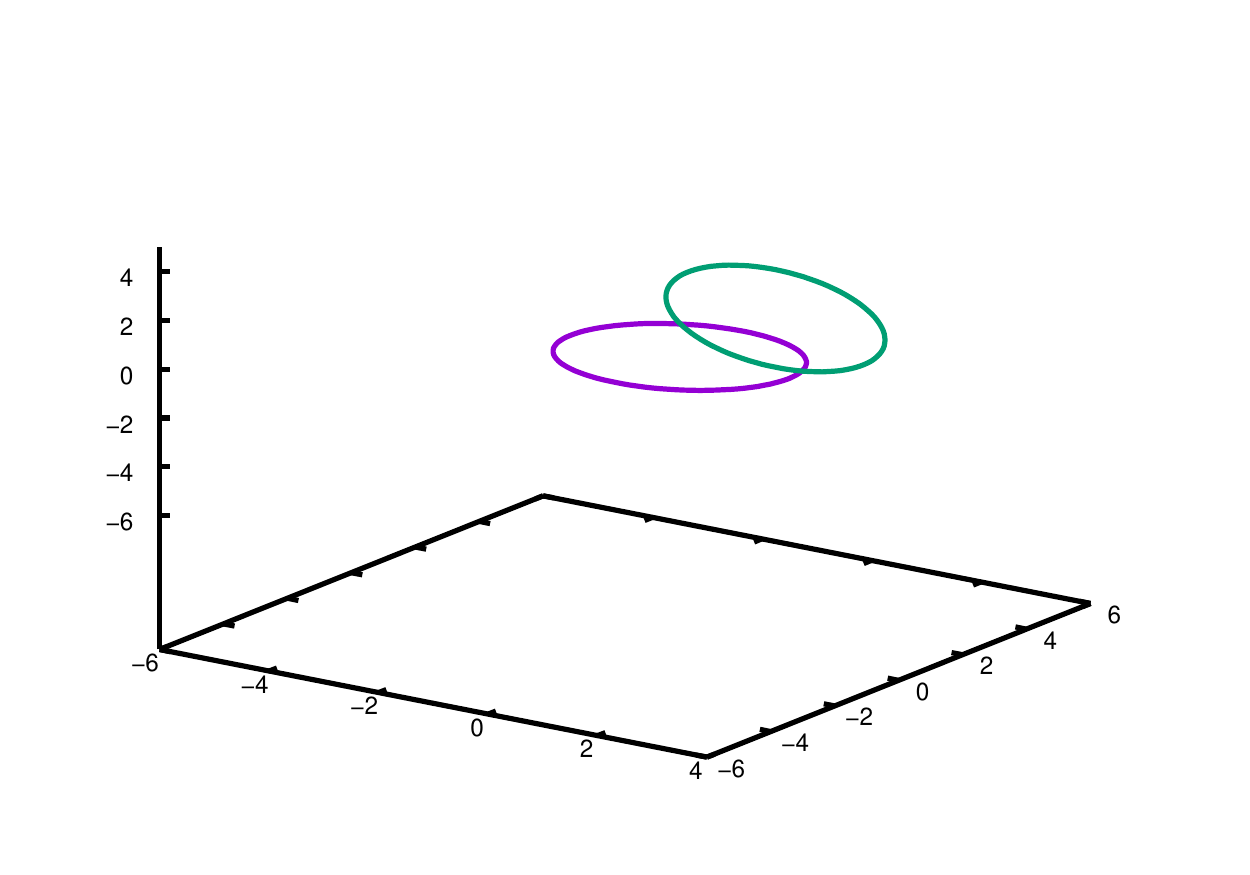}
\includegraphics[width=0.49\textwidth, trim= 1cm 0 1cm  0]{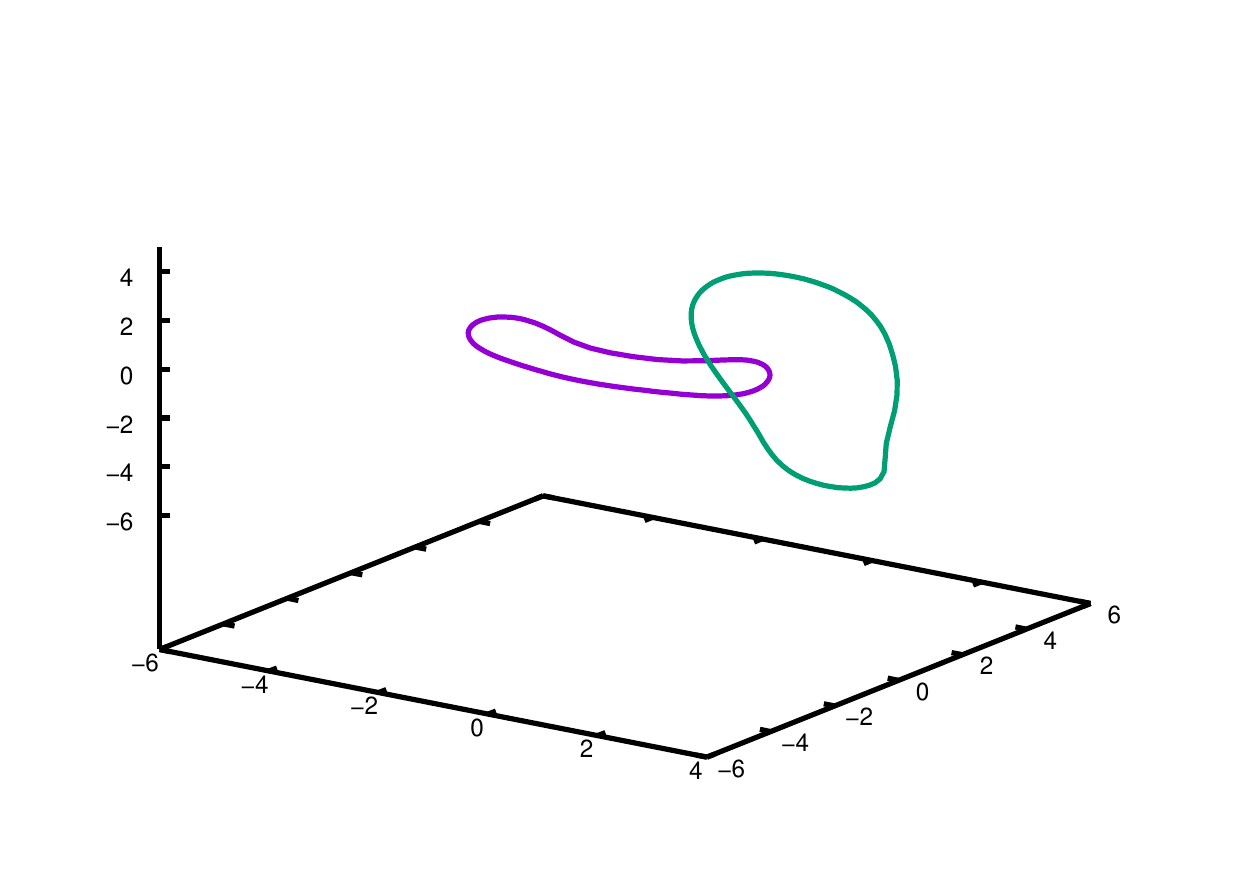}\\
\centerline{\small (a) time $t = 0.0$ \hspace{0.25\textwidth} (b) time $t = 7.2$}
\vspace{-5mm}
\includegraphics[width=0.49\textwidth, trim= 1cm 0 1cm  0]{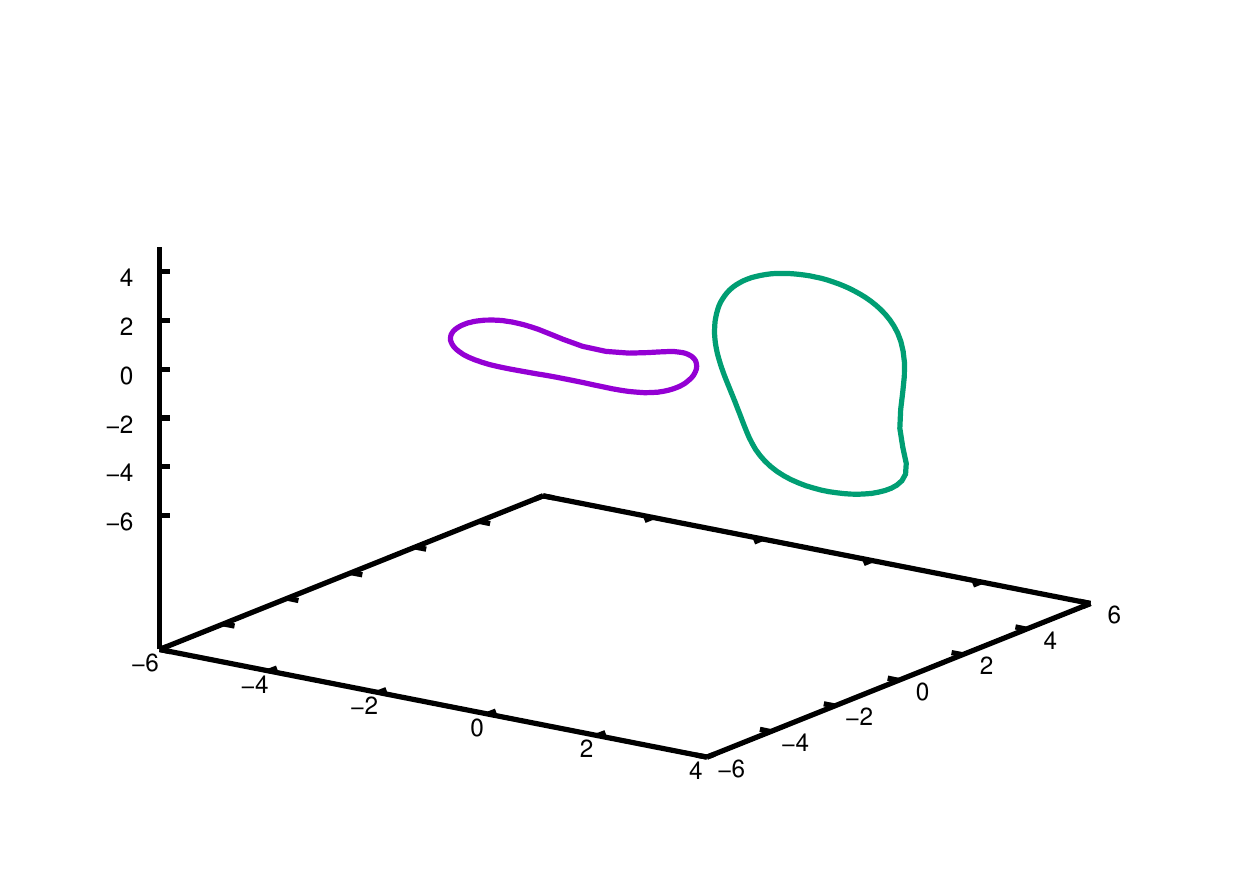}
\includegraphics[width=0.49\textwidth, trim= 1cm 0 1cm  0]{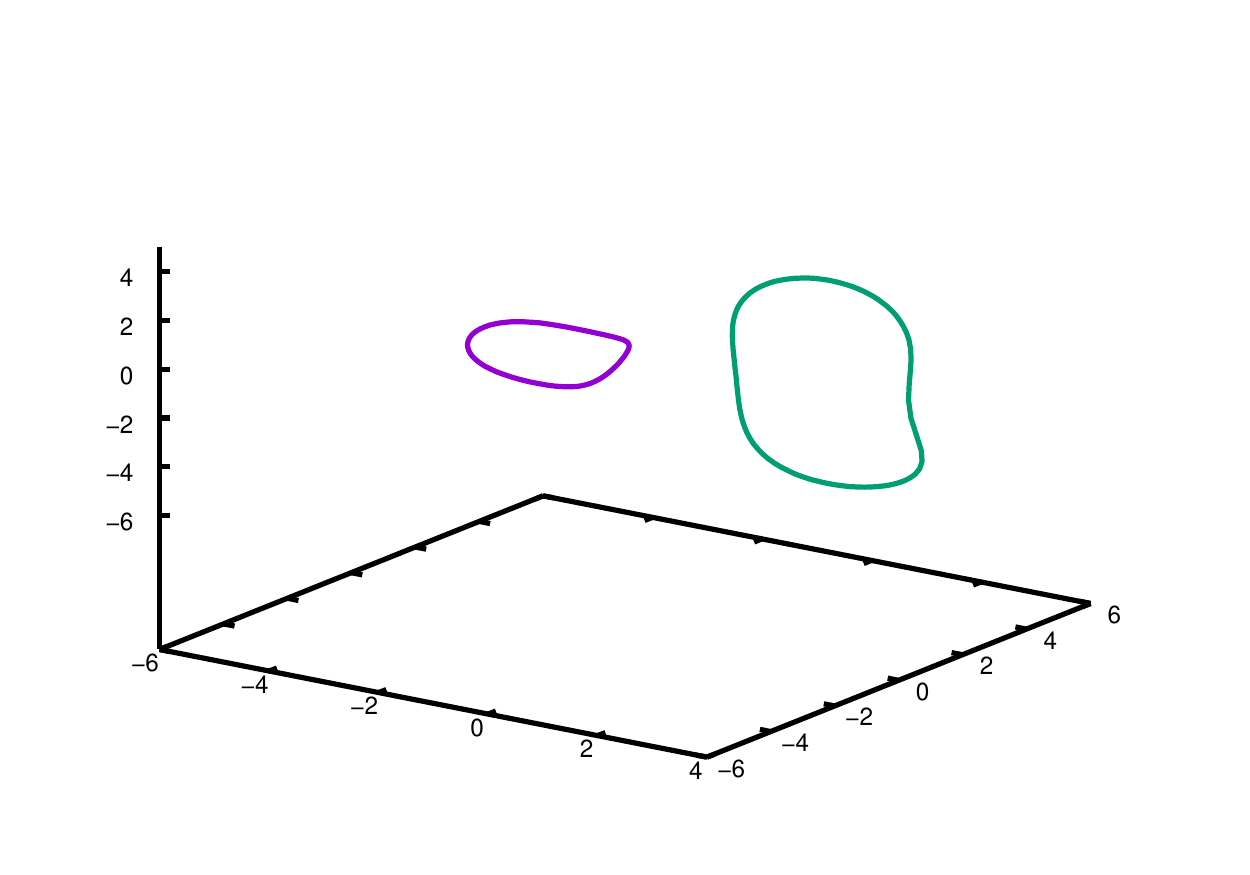}\\
\centerline{\small (c) time $t = 14.4$ \hspace{0.25\textwidth} (d) time $t = 21.6$}
\vspace{-5mm}
\includegraphics[width=0.49\textwidth, trim= 1cm 0 1cm  0]{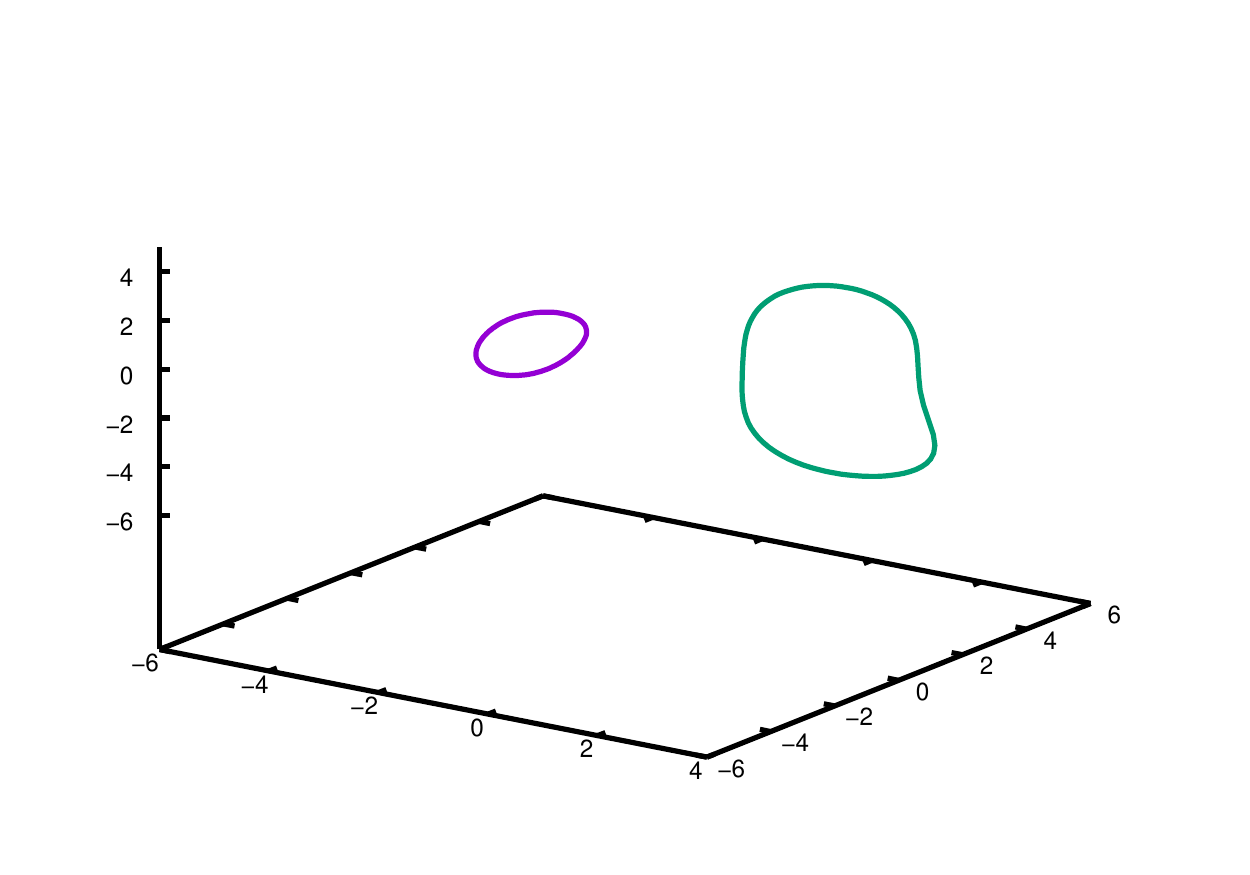}
\includegraphics[width=0.49\textwidth, trim= 1cm 0 1cm  0]{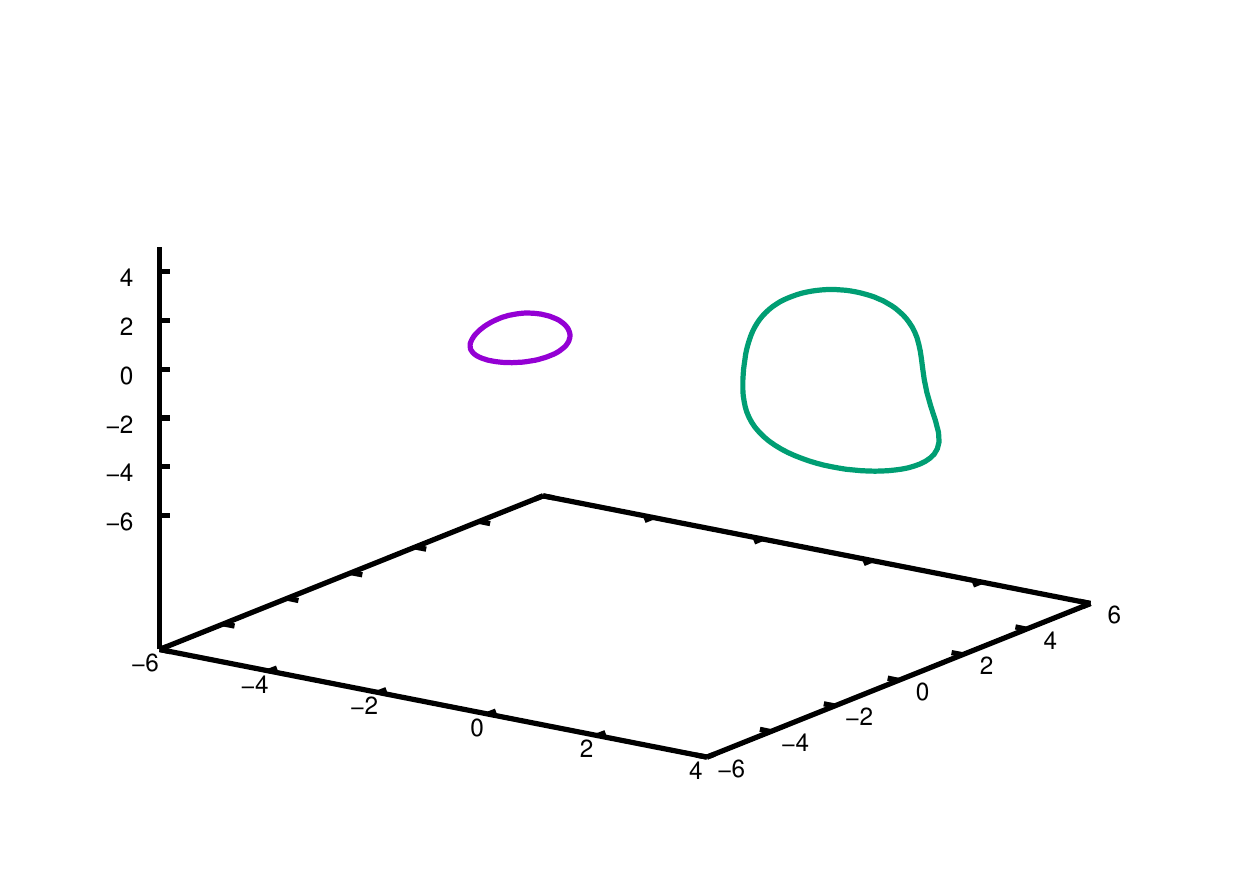}\\
\centerline{\small (e) time $t = 28.8$ \hspace{0.25\textwidth} (f) time $t = 32.0$}
\caption{ Example 2: Evolution of space curves with the Biot-Savart type of interactions starting from two circular curves in perpendicular planes.}
\label{fig:theory2}

\end{center}
\end{figure}

\begin{figure}
\begin{center}
\includegraphics[width=0.49\textwidth, trim= 1cm 0 1cm  0]{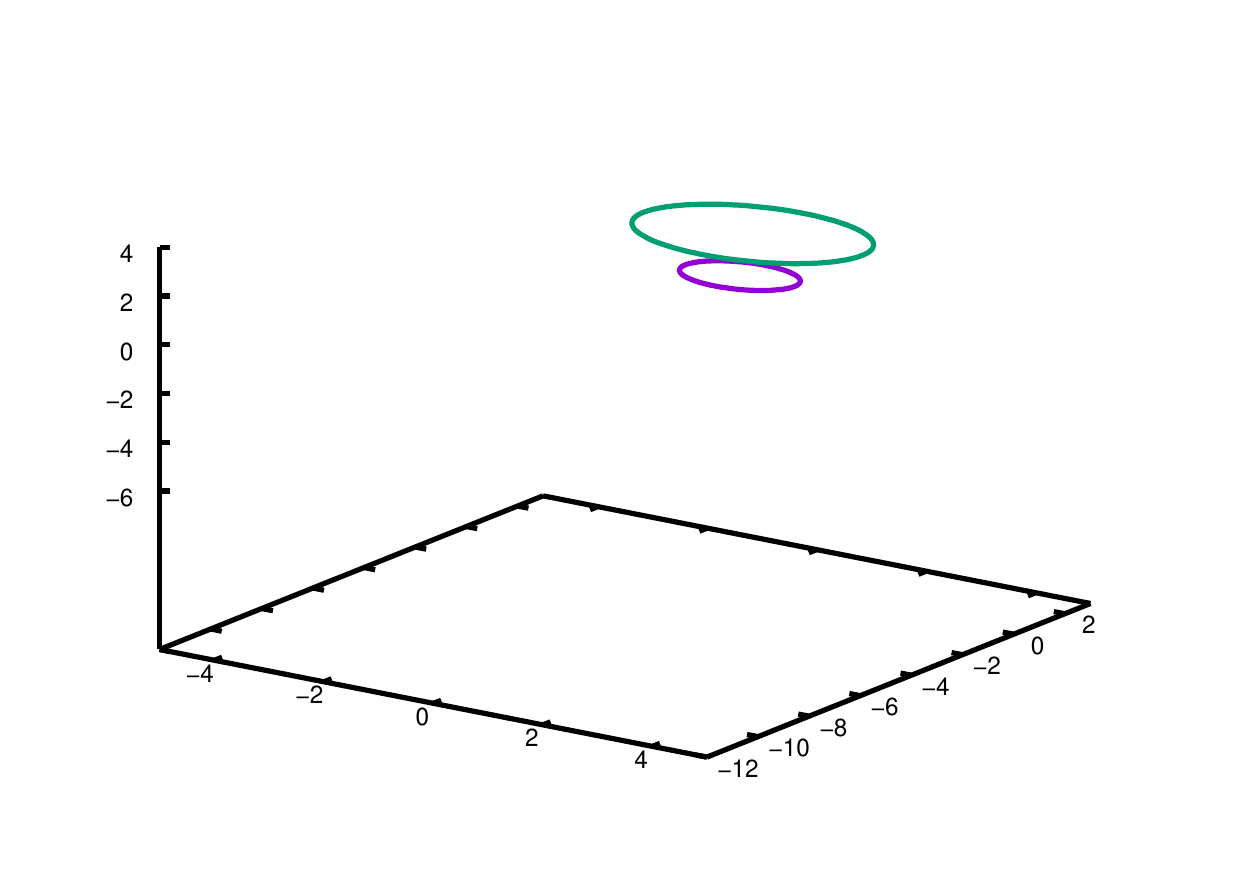}
\includegraphics[width=0.49\textwidth, trim= 1cm 0 1cm  0]{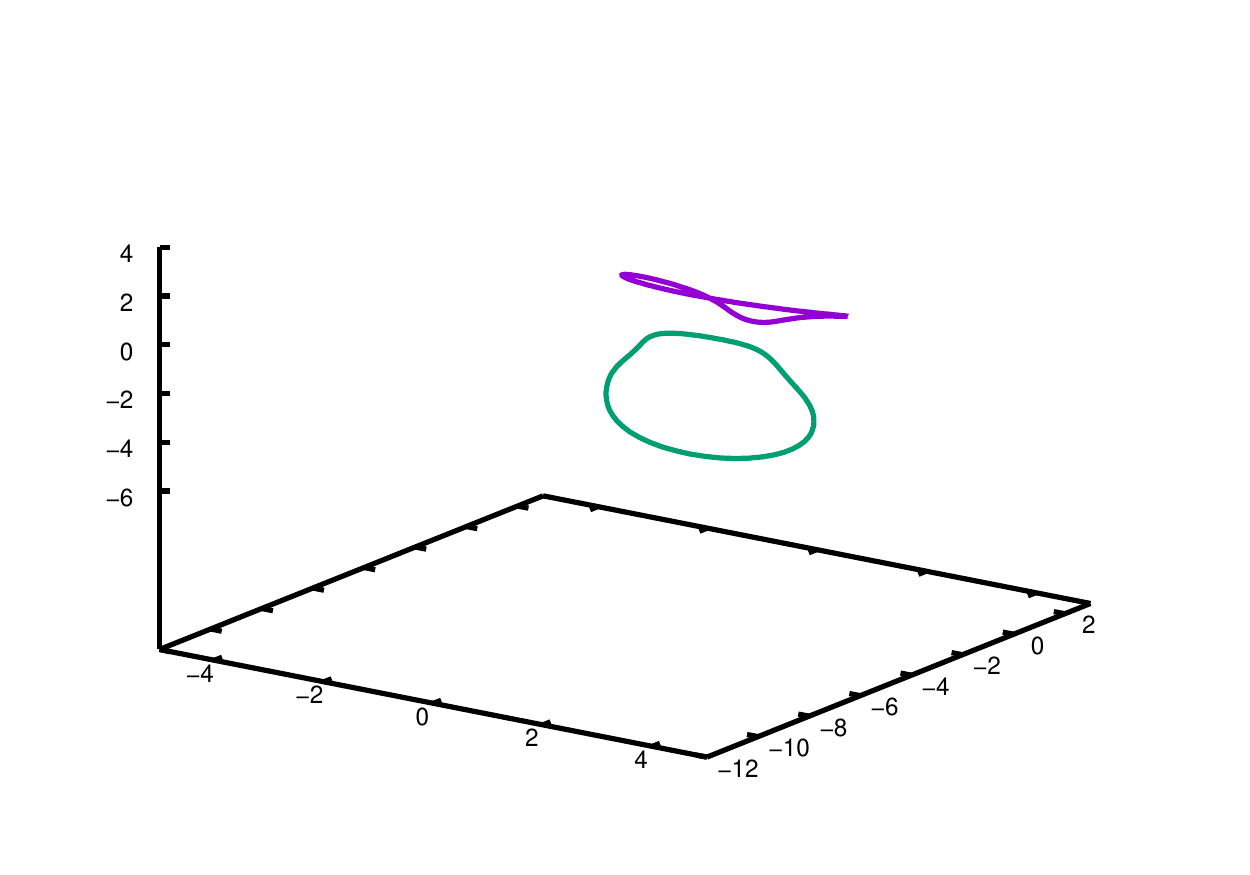}\\
\centerline{\small (a) time $t = 0.0$ \hspace{0.25\textwidth} (b) time $t = 4.4$}
\vspace{-5mm}
\includegraphics[width=0.49\textwidth, trim= 1cm 0 1cm  0]{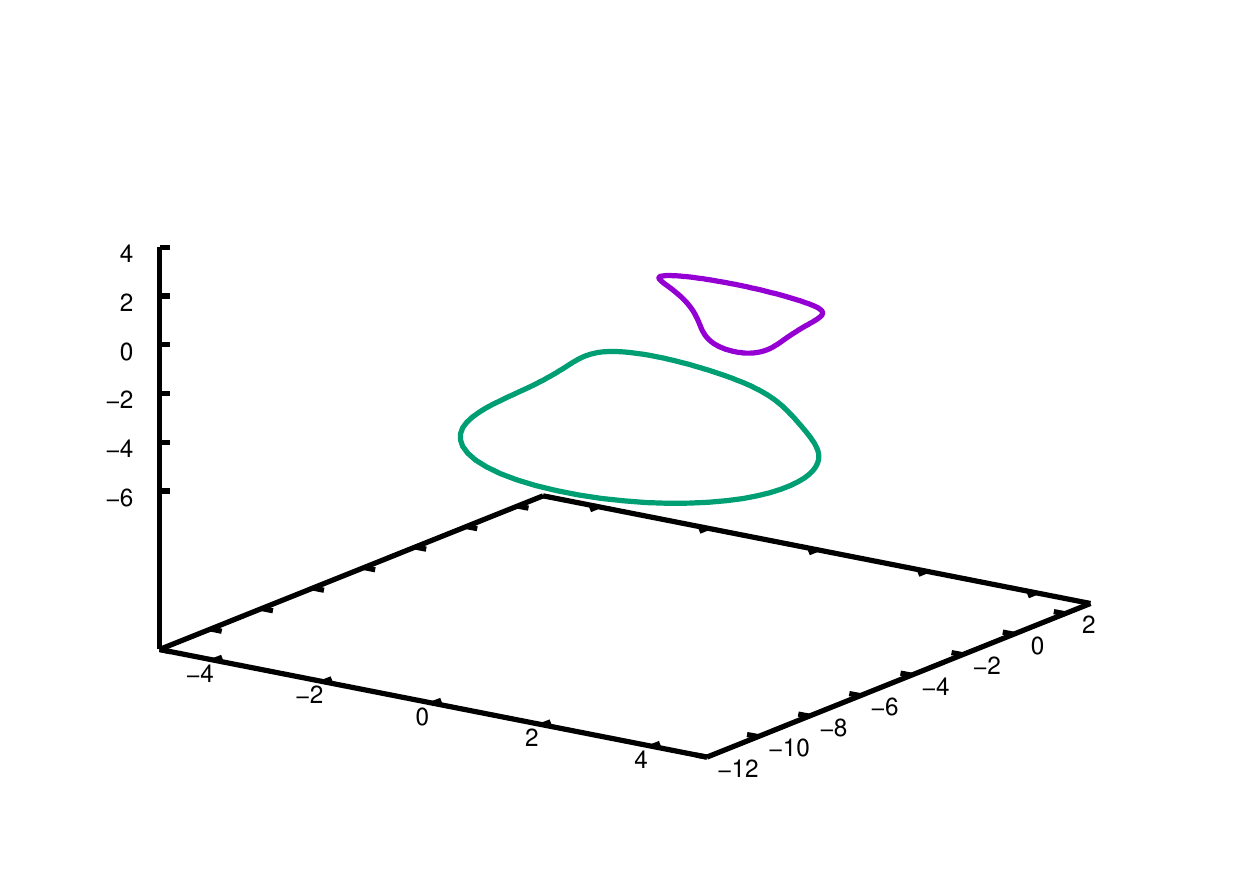}
\includegraphics[width=0.49\textwidth, trim= 1cm 0 1cm  0]{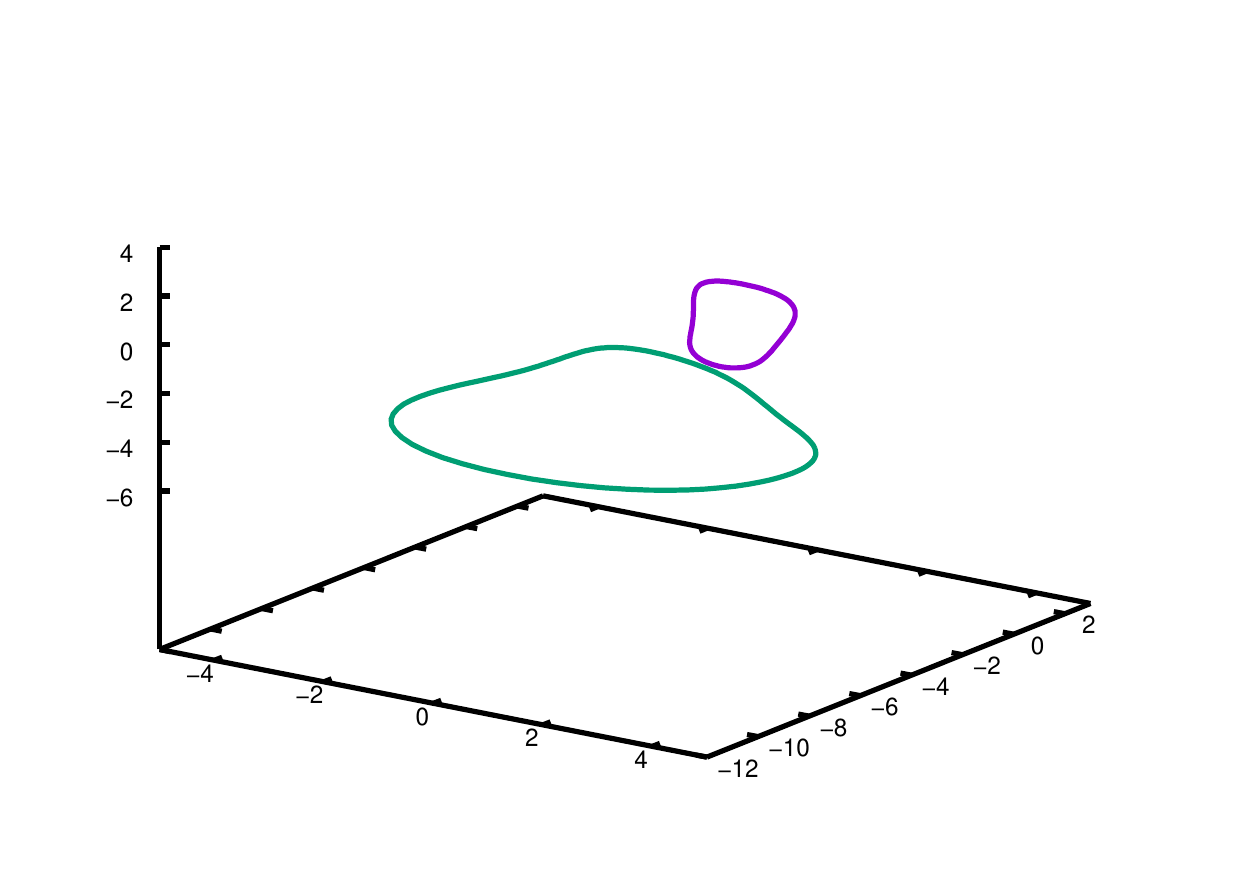}\\
\centerline{\small (c) time $t = 8.8$ \hspace{0.25\textwidth} (d) time $t = 13.2$}
\vspace{-5mm}
\includegraphics[width=0.49\textwidth, trim= 1cm 0 1cm  0]{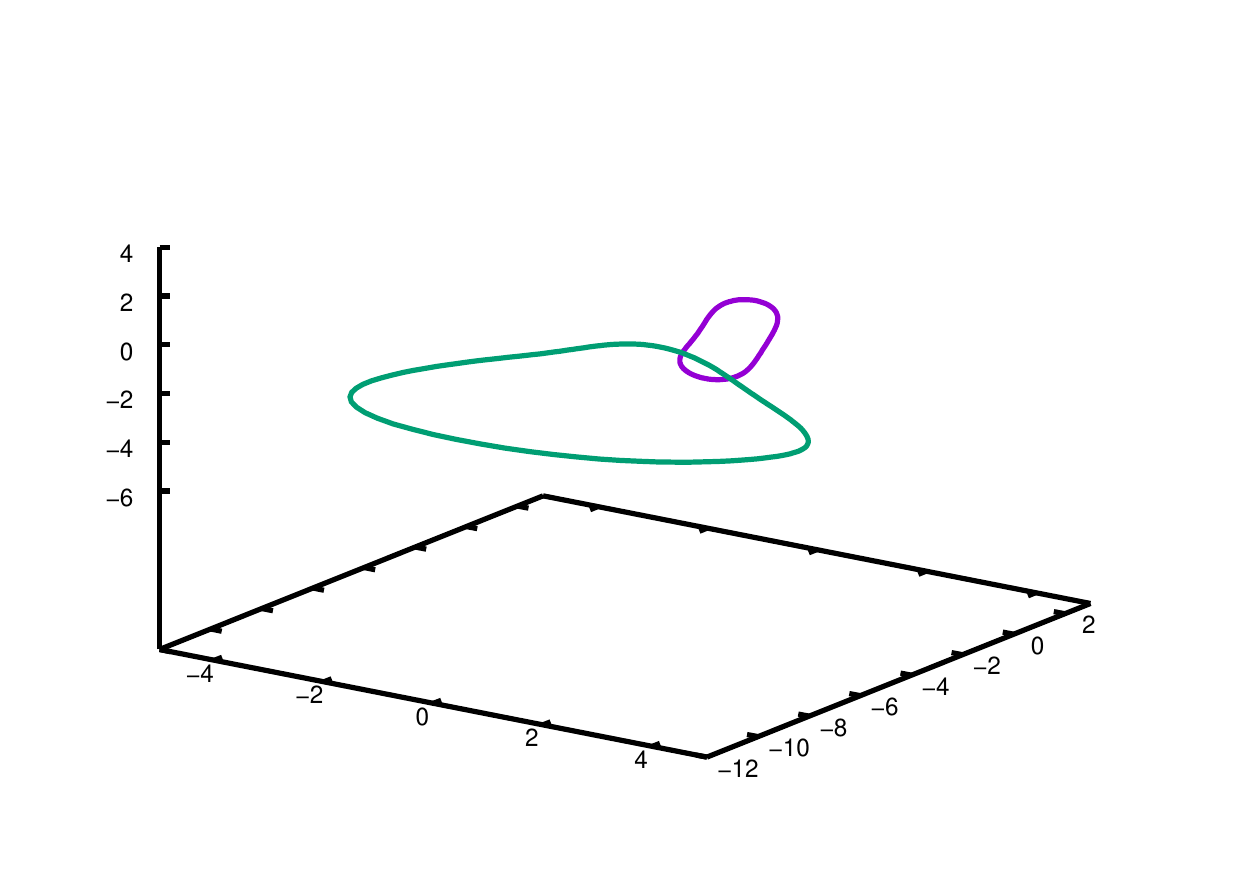}
\includegraphics[width=0.49\textwidth, trim= 1cm 0 1cm  0]{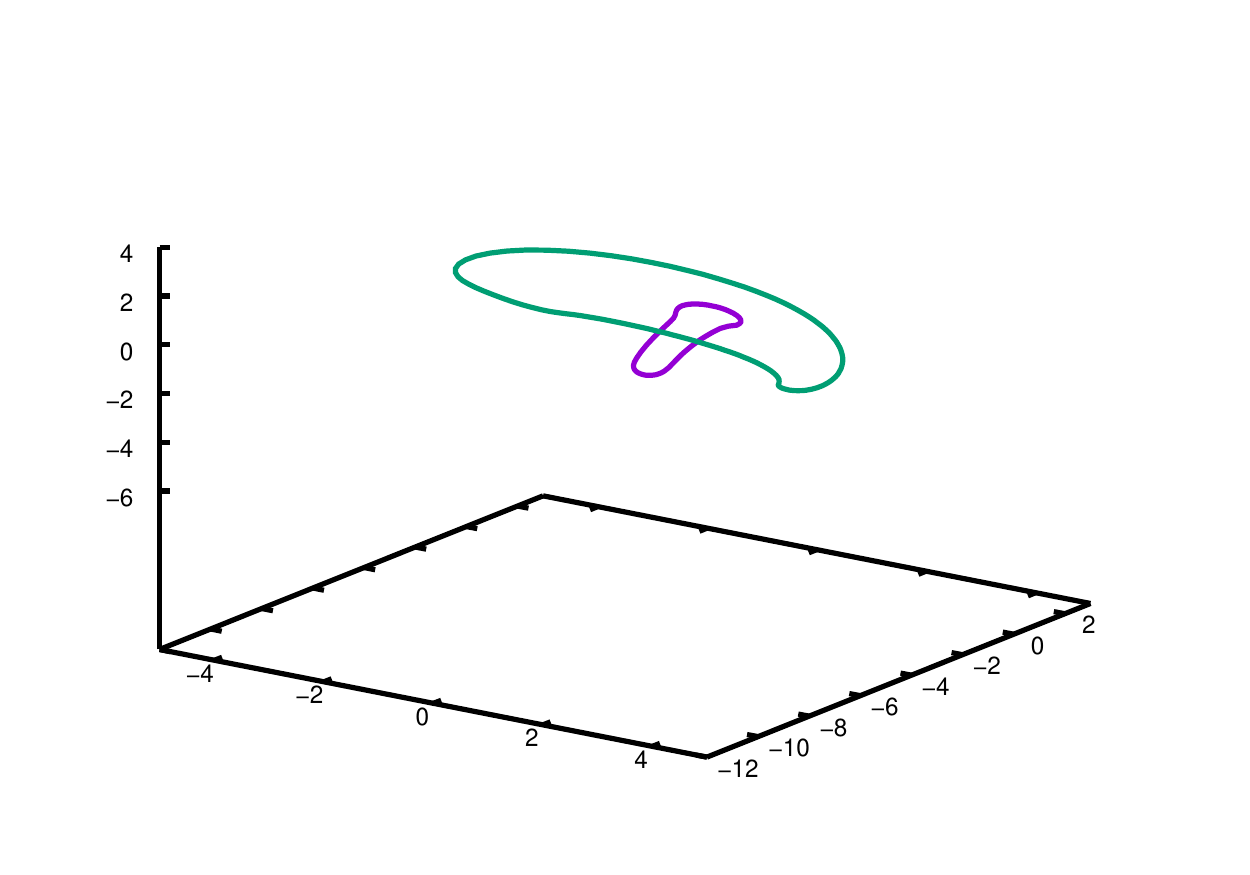}\\
\centerline{\small (e) time $t = 17.6$ \hspace{0.25\textwidth} (f) time $t = 31.6$}
\caption{Example 3: Evolution of space curves with the Biot-Savart type of interactions starting from two non-concentric circular curves showing the "acrobatic" dynamics.}
\label{fig:theory3}

\end{center}
\end{figure}

\subsection{Dynamics of concentric circles under pure binormal flow Biot-Savart type of interactions}\label{binormalonly}

We consider a flow of two vertically concentric circles driven by the system of equations (\ref{biot-savart}). It illustrates the effects of frog leap vortex dynamics (c.f. Mariani and Kontis \cite{Mariani2010}). Parametrizations of vertically concentric circles $\mb{X}^i, i=1,2$ with radii $r_i$ evolving in parallel planes with vertical heights $X_{3i}, i=1,2$, are given by
\[
\mb{X}^i = (r_i \cos 2\pi u, r_i \sin 2\pi u, X_{3i})^T,
\quad
\mb{X}^j = (r_j \cos 2\pi v, r_j \sin 2\pi v, X_{3j})^T,
\quad\text{for} \ u, v\in I.
\]
Then the unit tangent vector $\mb{T}^j = (-\sin 2\pi v, \cos 2\pi v, 0)^T$. In order to compute the integral nonlocal term $\gamma^{ij}(\mb{X}^i)$ is given by means of (\ref{biot-savart-force}) we note that $ds^j = g^j dv = |\partial_v \mb{X}^j| dv = 2\pi r_j$. Furthermore, for $\mb{X}^i = (r_i \cos 2\pi u, r_i \sin 2\pi u, X_{3i})^T$ we have
\begin{equation}
\begin{split}
\partial_s\mb{X}^k\times\partial^2_s \mb{X}^k &= (0,0,1)^T, \  k=i,j,
\\
(\mb{X}^i-\mb{X}^j)\times \mb{T}^j &=
(- z_{ij} \cos 2\pi v, -z_{ij} \sin 2\pi v, r_i \cos 2\pi (v-u) - r_j)^T ,
\\
|\mb{X}^i-\mb{X}^j| &=
|\mb{r}| \sqrt{ 1- \delta \cos 2\pi(v-u) },
\\
\text{where}\ \ z_{ij} = - z_{ji} &= X_{3i} - X_{3j}, \ \ \mb{r} = ( r_1, r_2, z_{12})^T, \ \ \delta= \delta_{ij}=\delta_{ji}=2r_i r_j/|\mb{r}|^2.
\end{split}
\label{ODE-biot-savart}
\end{equation}
Next, we compute the integral over the curve $\Gamma^j$ parametrized by $\mb{X}^j$. The complete elliptic functions of the first kind $K(m)=\int_0^{\pi/2} 1/\sqrt{1- m\sin^2(\vartheta)}d\vartheta$, and the second kind $E(m)=\int_0^{\pi/2} \sqrt{1- m\sin^2(\vartheta)}d\vartheta$ can be used in order to determine all terms entering the integral (\ref{biot-savart-force}) over the curve  $\Gamma^j$. After straightforward calculations employing differentiation of $E$ and $K$ functions, using integration by parts, and relationships between derivatives of $E$ and $K$  one can derive the following explicit expressions for parametric integrals:
\begin{eqnarray*}
I_s(\delta):= \int_0^1 \frac{\sin 2\pi \tilde v}{(1-\delta \sin 2\pi \tilde v)^{3/2}} d\tilde v &=& \frac{2}{\pi} \frac{1}{\delta(1-\delta)\sqrt{1+\delta}} \left(E\left(\frac{2\delta}{1+\delta}\right) - (1-\delta) K\left(\frac{2\delta}{1+\delta}\right) \right) ,
\\
I_c(\delta):= \int_0^1 \frac{\cos 2\pi \tilde v}{(1-\delta \sin 2\pi \tilde v)^{3/2}} d\tilde v &=& 0 ,
\\
I_0(\delta):= \int_0^1 \frac{1}{(1-\delta \sin 2\pi \tilde v)^{3/2}} d\tilde v &=& \frac{2}{\pi} \frac{1}{(1-\delta)\sqrt{1+\delta}} E\left(\frac{2\delta}{1+\delta}\right),
\end{eqnarray*}
for any $|\delta|<1$. Since $\cos 2\pi (v-u) = \sin 2\pi \tilde v$, where $\tilde v = v-u+1/4$ we obtain
\begin{eqnarray*}
\int_0^1 \frac{\sin 2\pi v}{(1-\delta \cos 2\pi (v-u) )^{3/2}} d v &=&
\int_{-u+1/4}^{-u+5/4} \frac{\sin 2\pi (\tilde v + u -\pi/4) }{(1-\delta \sin 2\pi \tilde v )^{3/2}}  d \tilde v
\\
&=& -\int_0^1 \frac{\cos 2\pi (\tilde v + u ) }{(1-\delta \sin 2\pi \tilde v )^{3/2}}  d \tilde v
= I_s(\delta) \sin 2\pi u.
\end{eqnarray*}
Arguing similarly as before, we obtain
\begin{eqnarray*}
\int_0^1 \frac{\cos 2\pi v}{(1-\delta \cos 2\pi (v-u) )^{3/2}} d v &=&
\int_0^1 \frac{\sin 2\pi (\tilde v + u ) }{(1-\delta \sin 2\pi \tilde v )^{3/2}}  d \tilde v
 = I_s(\delta) \cos 2\pi u,
 \\
\int_0^1 \frac{1}{(1-\delta \cos 2\pi (v-u) )^{3/2}} d v &=&
\int_0^1 \frac{1}{(1-\delta \sin 2\pi \tilde v )^{3/2}}  d \tilde v
 = I_0(\delta),
 \\
 \int_0^1 \frac{\cos 2\pi (v-u)}{(1-\delta \cos 2\pi (v-u) )^{3/2}} d v &=&
\int_0^1 \frac{\sin 2\pi \tilde v  }{(1-\delta \sin 2\pi \tilde v )^{3/2}}  d \tilde v
 = I_s(\delta) ,
\end{eqnarray*}

\begin{figure}
    \centering
    \includegraphics[width=0.34\textwidth]{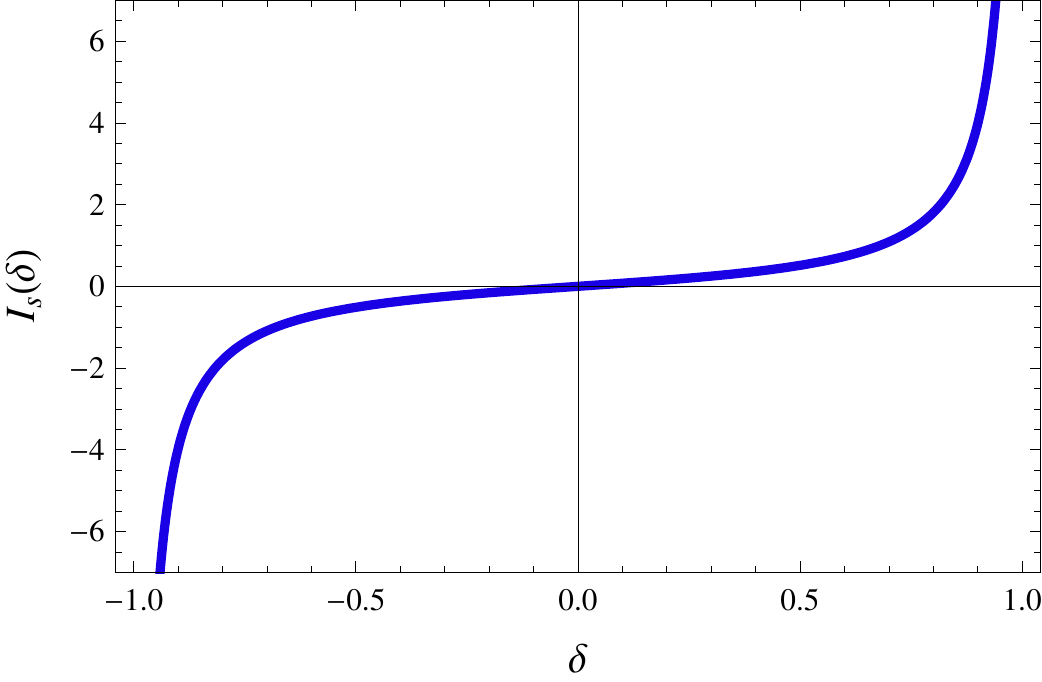}
    \qquad
    \includegraphics[width=0.33\textwidth]{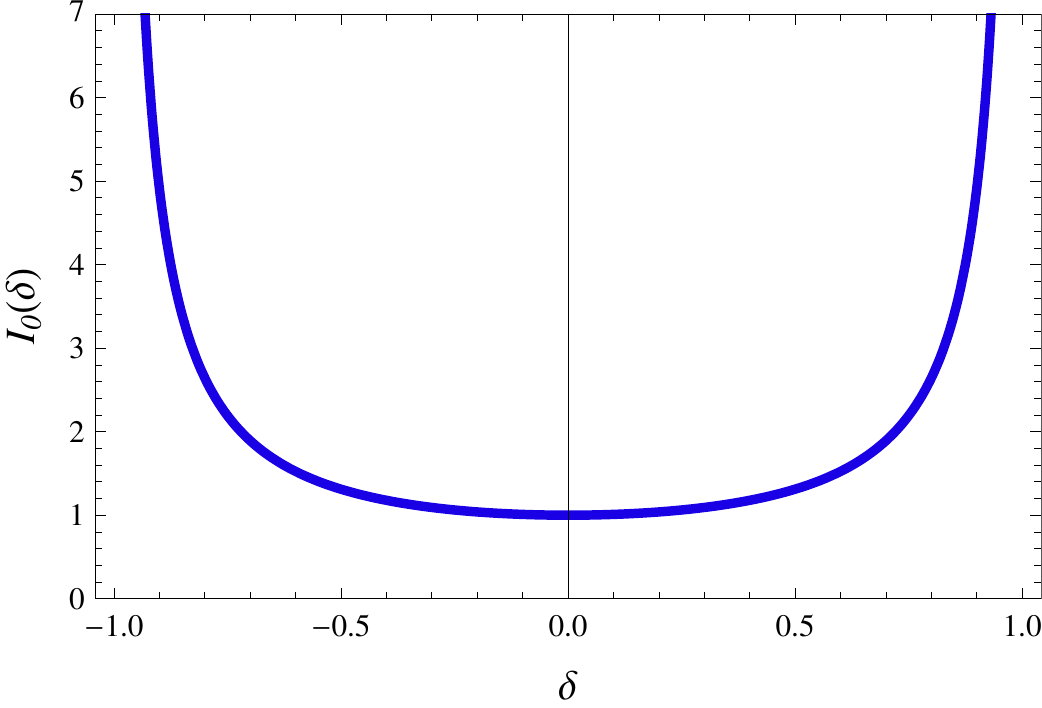}

    \caption{
    Graphs of the functions $I_s(\delta)$ (left) and $I_0(\delta)$ (right).}
    \label{fig:IsI0}
\end{figure}

In summary, we conclude
\begin{eqnarray*}
\gamma^{ij}(\mb{X}^i)
= \frac{2\pi r_j}{|\mb{r}|^3}
\left(
- z_{ij} I_s(\delta) \cos 2\pi u, \
- z_{ij} I_s(\delta) \sin 2\pi u, \
r_i I_s(\delta)  - r_j I_0(\delta)
\right)^T .
\end{eqnarray*}
The radii $r_1, r_2$ and the difference $z_{12}= -z_{21} = X_{31}- X_{32}$ of the heights of  underlying planes satisfy the following  system of nonlinear ODEs:
\begin{eqnarray}
\label{ODE}
\frac{d r_1 }{dt} &=& - \frac{2\pi r_2 z_{12}}{|\mb{r}|^3} I_s(\delta),
\nonumber
\\
\frac{d r_2 }{dt} &=&  \ \ \ \frac{2\pi r_1 z_{12}}{|\mb{r}|^3} I_s(\delta),
\\
\frac{d z_{12} }{dt} &=& \ \ \ \frac{2\pi (r_1^2 - r_2^2)}{|\mb{r}|^3} I_0(\delta), \quad \delta=2r_1 r_2/|\mb{r}|^2, \quad |\mb{r}| = \sqrt{r_1^2+r_2^2+z_{12}^2}.
\nonumber
\end{eqnarray}

If we sum the first equation in (\ref{ODE-biot-savart}) multiplied by $r_1$ with  the second equation multiplied by $r_2$ we conclude that
\[
\frac{d}{dt} (r_1^2(t) + r_2^2(t)) = 0.
\]
Hence the sum of enclosed areas $A(\Gamma^1) + A(\Gamma^2)$ is constant with respect to time, i.e.
\[
A(\Gamma^1_t) + A(\Gamma^2_t) = A(\Gamma^1_0) + A(\Gamma^2_0) \ \ \text{for all}\ t\ge 0.
\]
Therefore the system (\ref{ODE-biot-savart}) has a dynamics of a two-dimensional planar system of ODEs. With regard to the Poincar\'e-Bendixon theorem the $\omega$-limit sets of such a dynamical system consist either of a single fixed point, or a periodic orbit, or it is a connected set composed of a finite number of fixed points together with homoclinic and heteroclinic orbits connecting these fixed points.
In Figure~\ref{fig:planar} we show the solution $(r_1(t), r_2(t), z_{12}(t))$ of the system of ODEs (\ref{ODE-biot-savart}) with initial conditions $r_1(0)=2, r_2(0)=1, z_{12}(0)=3$. The radii of circles are periodically oscillating exchanging their maximums and minimums. Furthermore the difference between moving underlying planes is also oscillating so the one shrinking and expanding circle jumps up and down with respect to the other one.

\begin{figure}
    \centering
    \includegraphics[width=0.4\textwidth]{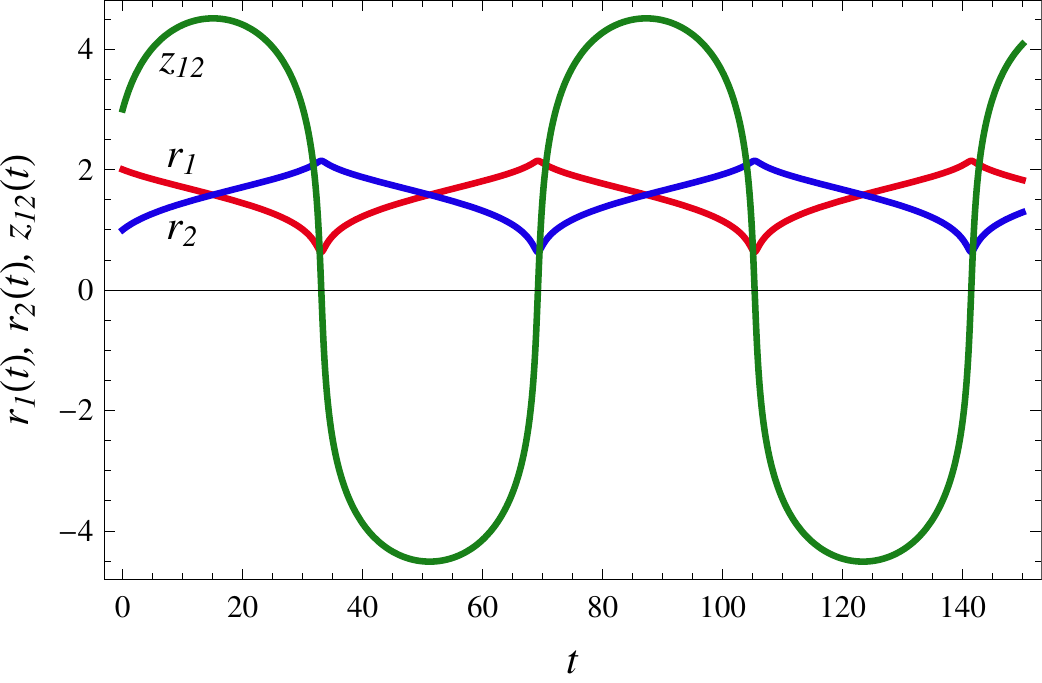}
    \caption{
    Graphs of the functions  $r_1(t)$ (red), $r_2(t)$ (blue), $z_{12}(t)$ (green) solving the nonlinear system of ODEs (\ref{ODE-biot-savart}).}
    \label{fig:planar}
\end{figure}

In general, the evolution of $n$ vertically concentric circles with radii $r_i$, and mutual differences $z_{ij}=X_{3i}-X_{3j}$ of their vertical heights $X_{3i}, i=1,\ldots, n$ satisfy the following system of ODEs:
\begin{eqnarray}
\label{ODE-general}
\frac{dr_i}{dt} &=& -2\pi \sum_{j\not=i} \frac{r_j z_{ij} I_s(\delta_{ij})}{(r_i^2+r_j^2+z_{ij}^2)^{3/2}}, \quad i=1,\ldots, n,
\nonumber
\\
\frac{dz_{ij}}{dt} &=& 2\pi \sum_{k\not=i} \frac{r_k r_i I_s(\delta_{ki}) - r_k^2 I_0(\delta_{ki})} {(r_i^2+r_k^2+z_{ik}^2)^{3/2}}
-
2\pi \sum_{l\not=j} \frac{r_l r_j I_s(\delta_{lj}) - r_l^2 I_0(\delta_{lj})} {(r_j^2+r_l^2+z_{jl}^2)^{3/2}}
,
\\
\nonumber
&&  \text{where}\ \ \delta_{ij} = r_i r_j/(r_i^2+r_j^2+z_{ij}^2),\qquad  i,j=1,\ldots, n.
\end{eqnarray}
Multiplying the differential equation for $r_i$ by $r_i$, summing them for $i=1,\ldots,n$, and taking into account that $z_{ji}= -z_{ij}, \delta_{ij}=\delta_{ji}$ we obtain:
\[
\frac{d}{dt}\sum_{i=1}^n r_i^2(t) = 0, \qquad \text{i.e.  }\quad
\sum_{i=1}^n A(\Gamma^i_t) = \sum_{i=1}^n A(\Gamma^i_0),
\]
for all $t\ge 0$. It means that the flow of vertically concentric circles governed by the geometric law (\ref{ODE-general}) preserves the total area enclosed by the evolving curves.
Since $z_{ij} =  z_{ik} + z_{kj}$, the system (\ref{ODE-general}) can be reduced and computed only for $2n-2$ variables $z_{12}, z_{23}, \ldots, z_{n-1, n}$, and $r_1, \ldots, r_{n-1}$.

\begin{figure}
    \centering
    \includegraphics[width=0.4\textwidth]{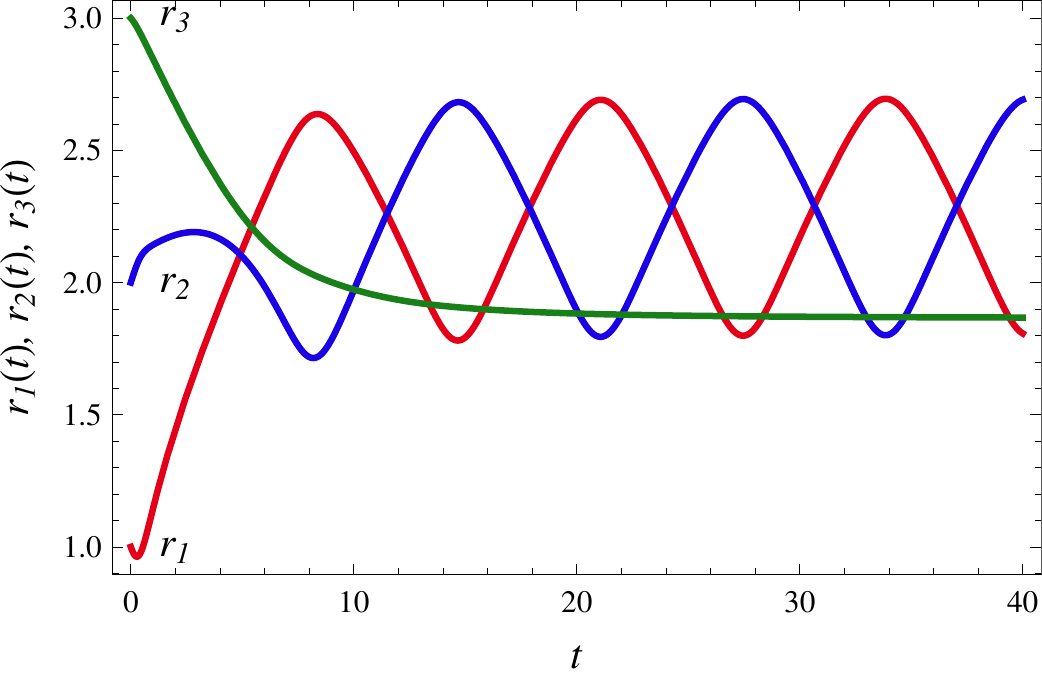}
    \quad
    \includegraphics[width=0.4\textwidth]{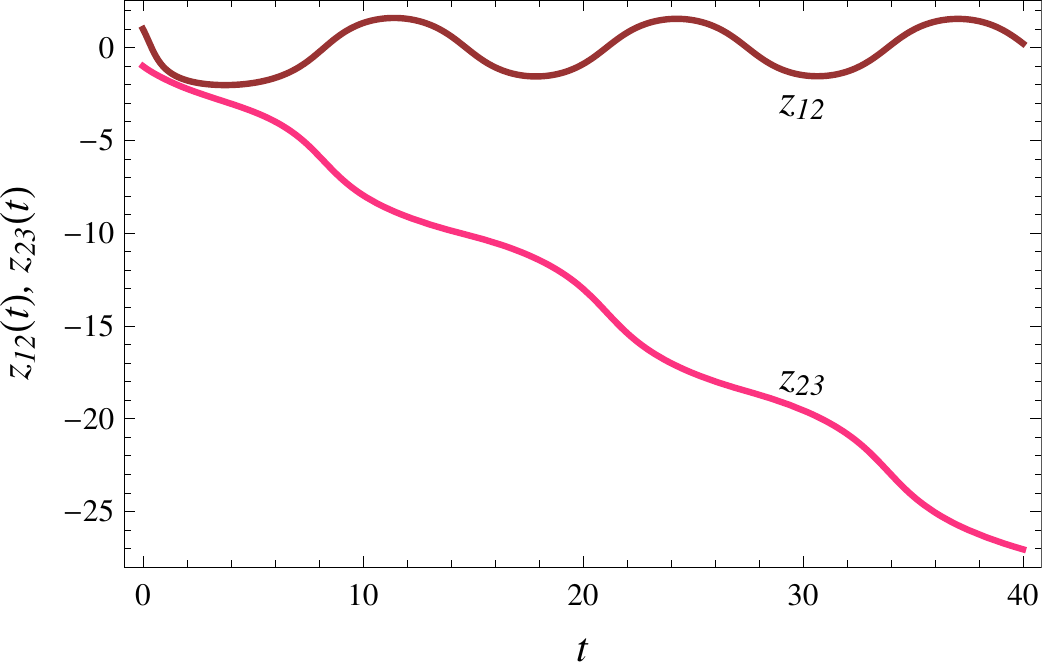}
    \caption{
    Left: Graphs of the functions  $r_1(t)$ (red), $r_2(t)$ (blue), $r_3(t)$ (green). Right: Graphs of the functions  $z_{12}(t)$ (brown), $z_{23}(t)$ (pink).}
    \label{fig:3D}
\end{figure}
In Figure~\ref{fig:3D} we show evolution of radii of $n=3$ vertically concentric circles (left) and their mutual vertical differences $z_{12}, z_{23}$ (right). The dynamical behavior is similar to the $n=2$ case shown in Figure~\ref{fig:planar}) as the radius $r_3$ tends to a steady state, i.e. the circle $\Gamma^3$ converges to a stationary position. The circles $\Gamma^1$ and $\Gamma^2$ are periodically shrinking and expanding as $z_{12}$ oscillates around zero. Their mutual distances $|z_{23}|$, and $|z_{13}|=|z_{12}+z_{23}|$ tend to the third circle $\Gamma^3_t$ tends to infinity as $t\to\infty$.

\section{Conclusion}

In this paper we investigated a curvature driven geometric flow of several curves evolving in 3D  with mutual interactions which can exhibit local as well as nonlocal character and entire curve influences evolution of other curves. We proposed a direct Lagrangian approach for solving such a geometric flow of curves. Using the abstract theory of analytic semi-flows in Banach spaces we proved local existence, uniqueness and continuation of H\"older smooth solutions to the governing system of nonlinear parabolic equations for the position vector parametrization of evolving curves. We applied the method of the flowing finite volume method in combination with the method of lines for numerical discretization of governing equations. We presented several computational examples of evolution of interacting curves. Interaction were modeled by means of the Biot-Savart nonlocal law.

\bigskip

\noindent{\bf Acknowledgement.} This work was partly supported by the Ministry of Education, Youth and Sports of the Czech Republic under the OP RDE grant number CZ.02.1.01/0.0 /0.0/16 019/0000753 "Research centre for low-carbon energy technologies". D. \v{S}ev\v{c}ovi\v{c} was supported by  the Slovak Research and Development Agency under the project APVV-20-0311.


\begin{thebibliography}{99}



\bibitem{altschuler1991}
\newblock {\sc S.~J. Altschuler}, 
\newblock \emph{Singularities  of  the  curve shrinking flow for  space  curves},
\newblock J. Differ. Geom., 34 (1991), pp. 491-514.

\bibitem{altschuler1992}
\newblock {\sc S.~J. Altschuler and M.  Grayson},
\newblock \emph{Shortening space curves and flow through singularities},
\newblock J. Differ. Geom., 35 (1992), pp. 283-298.


\bibitem{ambroz2019}
\newblock {\sc M. Ambro\v{z}, M. Bala\v zovjech, M. Med\v{l}a, and K. Mikula},
\newblock \emph{Numerical modeling of wildland surface fire propagation by evolving surface curves},
\newblock Adv. Comput. Math., 45(2) (2019), pp. 1067–1103.


\bibitem{Angenent1990b}
\newblock {\sc S. Angenent},
\newblock \emph{Parabolic equations for curves on surfaces. I: Curves with $p$-integrable curvature},
\newblock Ann. Math., 132(2) (1990), pp. 451--483.

\bibitem{Angenent1990}
\newblock {\sc S. Angenent}, 
\newblock \emph{Nonlinear analytic semi-flows}.
\newblock Proc.~R.~Soc.~Edinb., Sect.~A, 115 (1990), pp. 91--107.

\bibitem{Arms:65}
\newblock {\sc R.J. Arms and F. R. Hama},
\newblock \emph{Localized Induction Concept on a Curved Vortex and Motion of an Elliptic Vortex Ring}, 
\newblock Phys. Fluids, 8 (1965), pp. 553-559.


\bibitem{Barret2010}
\newblock {\sc J.~W. Barrett, H. Garcke, and R. N\"urnberg},  
\newblock \emph{Numerical approximation of gradient flows for closed curves in $R^d$},  
\newblock IMA J. Numer. Anal., 30(1), (2010), pp. 4--60. 

\bibitem{Barret2012}
\newblock {\sc J.~W. Barrett, H. Garcke, and R. N\"urnberg},  
\newblock \emph{Parametric approximation of isotropic and anisotropic elastic flow for closed and open curves},  
\newblock Numer. Math.,  120(3),  (2012), pp. 489--542.

\bibitem{Betchov:65}
\newblock {\sc R. Betchov}, 
\newblock \emph{On the curvature and torsion of an isolated vortex filament}, 
\newblock J. Fluid Mech., 22 (1965), pp. 471-479.


\bibitem{BKS2020}
\newblock {\sc M. Bene\v{s}, M. Kol{\' a}{\v r} M,  and D. \v{S}ev\v{c}ovi\v{c} }, 
\newblock \emph{Curvature driven flow of a family of interacting curves with applications}, 
\newblock Math. Method. Appl. Sci.,
43 (2020), pp. 4177-4190.



\bibitem{Bewley:08}
\newblock {\sc G.~P. Bewley, M.~S. Paoletti, K.~R. Sreenivasan, and D.~P. Lathrop}, 
\newblock \emph{Characterization of reconnecting vortices in super-fluid helium},
\newblock P. Natl. Acad. Sci. U. S. A., 105(37) (2008), pp. 13707-13710.

\bibitem {BroSto97}
\newblock {\sc L. Bronsard and B. Stoth},
\newblock \emph{Volume-preserving mean curvature flow as a limit of a nonlocal {G}inzburg-{L}andau equation}, 
\newblock SIAM J. Math. Anal., 28(4) (1997), pp. 769--807.

\bibitem{daprato}
\newblock {\sc G. Da Prato and P. Grisvard}, 
\newblock \emph{Equations d'\'evolution abstraites non lin\'eaires de type parabolique}.
\newblock Ann. Mat. Pura Appl., 4 (1979), pp. 329--396.


\bibitem{Deckelnick1995}
\newblock {\sc K. Deckelnick},
\newblock \emph{Parametric mean curvature evolution with a Dirichlet boundary condition}.
\newblock J. Reine Angew. Math., 459 (1995), pp. 37--60.

\bibitem{Rios:06}
\newblock {\sc L.~S. Da Rios},
\newblock \emph{Sul Moto di un filetto vorticoso di forma qualunque}, 
\newblock Rend. Circ. Mat. Palermo, 22 (1906), pp. 117-135.

\bibitem{Devincre}
\newblock {\sc B. Devincre, T. Hoc, and L. P. Kubin}, 
\newblock \emph{Dislocation mean free paths and strain hardening of crystals}, 
\newblock Science, 320 (2008), pp. 1745–1748.

\bibitem{Elliot2017}
\newblock {\sc Ch.~M. Elliott and H. Fritz},  
\newblock \emph{On approximations of the curve shortening flow and of the mean curvature flow based on the DeTurck trick},  
\newblock IMA J. Numer. Anal.,  37(2),  (2017), pp. 543--603.


\bibitem{EpsteinGage}
\newblock {\sc C.~L. Epstein and M. Gage}, 
\newblock \emph{The curve shortening flow}.
\newblock In: A.J. Chorin, A.J. Majda (eds), Wave Motion: Theory, Modelling, and Computation. Mathematical Sciences Research Institute Publications, vol. 7, Springer, New York, 1987.

\bibitem{Fierling2016}
\newblock {\sc J. Fierling, A. Johner, I. M. Kulic, H. Mohrbach, and M. M. Mueller}, 
\newblock \emph{How bio-filaments twist membranes},  
\newblock Soft Matter, 12(26) (2016), pp. 5747--5757.

\bibitem{Fukumoto1987}
\newblock {\sc Y. Fukumoto},
\newblock \emph{On Integral Invariants for Vortex Motion under the Localized Induction Approximation},  
\newblock J. Phys. Soc. Jpn., 56(12) (1987), pp. 4207--4209.

\bibitem{Fukumoto1991}
\newblock {\sc Y. Fukumoto and T. Miyzaki}
\newblock \emph{Three-dimensional distortions of a vortex filament with axial velocity}, 
\newblock J. Fluid Mech., 222 (1991), pp. 369--416.

\bibitem{Gage86}
\newblock {\sc M. Gage}, 
\newblock \emph{On an area-preserving evolution equation for plane curves}.
\newblock Contemp. Math., 51 (1986), pp. 51--62.


\bibitem{Garcke2009}
\newblock {\sc H. Garcke, Y. Kohsaka, and D. \v{S}ev\v{c}ovi\v{c}}, 
\newblock  \emph{Nonlinear stability of stationary solutions for curvature flow with triple junction}, 
\newblock Hokkaido Math. J., 38(4) (2009), pp. 721--769.


\bibitem{Glagolev:18}
\newblock {\sc M.K. Glagolev and V.V. Vasilevskaya},
\newblock \emph{Liquid-Crystalline Ordering of Filaments Formed by Bidisperse Amphiphilic Macromolecules}. 
\newblock Polym. Sci. Ser. C, 60 (2018), pp. 39--47.


\bibitem{He}
\newblock {\sc J-H. He, Y. Liu, L-F. Mo, Y-Q. Wan, and L. Xu}, 
\newblock \emph{Electrospun Nanofibres and Their Applications}, 
\newblock iSmithers, Shawbury (2008).


\bibitem{Helmholtz1858}
\newblock {\sc H. Helmholtz}, 
\newblock \emph{{\" U}ber {I}ntegrale der hydrodynamischen {G}leichungen, welche den {W}irbelbewegungen entsprechen},
\newblock {J. Reine Angew. Math.}, 55  (1858), pp. 25--55.

\bibitem{Hirth}
\newblock {\sc J. P. Hirth and J. Lothe}, 
\newblock \emph{Theory of Dislocations}, 
\newblock Wiley, 1982.


\bibitem{Hou}
\newblock {\sc T.~Y. Hou, J. Lowengrub, and M. Shelley},
\newblock \emph{Removing the stiffness from interfacial flows and surface tension},
\newblock J. Comput. Phys., 114 (1994), pp. 312--338.

\bibitem{Hoz2014}
\newblock {\sc F. de la Hoz and L. Vega},
\newblock \emph{Vortex filament equation for a regular polygon}, 
\newblock Nonlinearity, 27(12) (2014), pp. 3031--3057.

\bibitem{Ishiwata:12}
\newblock {\sc T. Ishiwata and K. Kumazaki},
\newblock \emph{Structure-preserving Finite Difference Scheme for Vortex Filament Motion}. 
\newblock In Proceedings of Algoritmy 2012, 19th Conference on Scientific Computing, Vysoké Tatry - Podbanské, Slovakia, September 9-14, 2012, Slovak University of Technology in Bratislava, Publishing House of STU,
2012, pp. 230–238.

\bibitem{Jerrard:15}
\newblock {\sc R.~L. Jerrard and D. Smets}, 
\newblock \emph{On the motion of a curve by its binormal curvature}, 
\newblock J. Eur. Math. Soc., 017(6) (2015), pp. 1487-1515.


\bibitem{Jerrard:17}ARCH RATION MECH AN
\newblock {\sc R.~L. Jerrard and C. Seis}, 
\newblock \emph{On the Vortex Filament Conjecture for Euler Flows}, 
\newblock Arch. Ration. Mech. An., 224 (2017), pp. 135–172.

\bibitem{Kang:17}
\newblock {\sc M. Kang and H. Cui and S. M. Loverde},
\newblock \emph{Coarse-grained molecular dynamics studies of the structure and stability of peptide-based drug amphiphile filaments},
\newblock Soft Matter, 13 (2017), pp. 7721--7730.

\bibitem{Kessler1984}
\newblock {\sc D.~A. Kessler, J. Koplik, and H. Levine}, 
\newblock \emph{Numerical simulation of two-dimensional snowflake growth},  
\newblock Phys. Rev. A,
30(5), (1984), pp. 2820--2823.


\bibitem{Kimura}
\newblock {\sc M. Kimura}, 
\newblock \emph{Numerical analysis for moving boundary problems using the boundary tracking method}, 
\newblock Jpn. J. Indust. Appl. Math., 14 (1997), pp. 373--398.

\bibitem{KoBeSe:14}
\newblock {\sc M. Kol{\' a}{\v r},  M. Bene{\v s}, and D. \v{S}ev\v{c}ovi\v{c}}, 
\newblock \emph{Computational studies of conserved mean-curvature flow}, \newblock Math. Bohem., 139(4) (2014), pp. 677--684.

\bibitem{matcom}
\newblock {\sc M. Kol{\' a}{\v r},  M. Bene{\v s}, and D. \v{S}ev\v{c}ovi\v{c}}, 
\newblock \emph{Computational analysis of the conserved curvature driven flow for open curves in the plane}, 
\newblock Math. Comput. Simulat., 126 (2016), pp. 1--13.


\bibitem{BKS2017}
\newblock {\sc M. Kol{\' a}{\v r},  M. Bene{\v s}, and D. \v{S}ev\v{c}ovi\v{c}}, 
\newblock \emph{Area Preserving Geodesic Curvature Driven Flow of Closed Curves on a Surface},
\newblock Discrete Contin. Dyn. Syst. Ser. B, 22(10) (2017), pp. 3671-3689.

\bibitem{PBKK:21}
\newblock {\sc M. Kol{\' a}{\v r}, P. Pau{\v s}, J. Kratochv{\' i}l,  and M. Bene{\v s}}, 
\newblock \emph{Improving method for deterministic treatment of double cross-slip in FCC metals under low homologous temperatures}, 
\newblock Comput. Mater. Sci., 189 (2021), p. 110251.

\bibitem{Kubin}
\newblock {\sc L. P. Kubin}, 
\newblock \emph{Dislocations, Mesoscale Simulations and Plastic Flow}, 
\newblock Oxford University Press, 2013.

\bibitem{Laux:19}
\newblock {\sc T. Laux and  N.~K. Yip}, 
\newblock \emph{Analysis of Diffusion Generated Motion for Mean Curvature Flow in Codimension Two: A Gradient-Flow Approach}, 
\newblock Arch. Ration. Mech. An., 232(2) (2019), pp. 1113--1163.


\bibitem{Lunardi1984}
\newblock {\sc A. Lunardi}, 
\newblock \emph{Abstract quasilinear parabolic equations},
\newblock Math. Ann., 267 (1984), pp. 395--416.

\bibitem{Mariani2010}
\newblock {\sc R. Mariani and K. Kontis}, 
\newblock \emph{Experimental studies on coaxial vortex loops}, 
\newblock Phys. Fluids, 22 (2010), p. 126102.


\bibitem{Meleshko:12}
\newblock {\sc V.~V. Meleshko, A.~A. Gourjii, and T.~S. Krasnopolskaya}, 
\newblock \emph{Vortex rings: History and state of the art}, 
\newblock J. Math. Sci., 187(6) (2012), pp. 772--808.

\bibitem{sevcovic2001evolution}
\newblock {\sc K. Mikula and D. \v{S}ev\v{c}ovi\v{c}}, 
\newblock \emph{Evolution of plane curves driven by a nonlinear function of curvature and anisotropy}, 
\newblock SIAM J. Appl. Math., 61 (2001), pp.1473--1501.


\bibitem{MS2004}
\newblock {\sc K. Mikula and D. \v{S}ev\v{c}ovi\v{c}}, 
\newblock \emph{Computational and qualitative aspects of evolution of curves driven by curvature and external force}, 
\newblock Comput. Vis. Sci., 6 (2004), pp. 211--225.

\bibitem{M2021}
\newblock {\sc K. Mikula, J. Urb\'an, M. Koll\'ar, M. Ambro\v z, T. Jarol\'{\i}mek, J. \v Sib\'{\i}k, and M. \v Sib\'{\i}kov\'a}, 
\newblock \emph{Semi-automatic segmentation of Natura 2000 habitats in Sentinel-2 satellite images by evolving open curves}, 
\newblock Discrete Contin. Dyn. Syst. Ser. S, 14(3) (2021), pp. 1033-1046.

\bibitem{MU2014}
\newblock {\sc K. Mikula and J. Urb\'an}, 
\newblock \emph{A new tangentially stabilized 3D curve evolution algorithm and its application in virtual colonoscopy},
\newblock Adv. Comput. Math., 40 (2014), pp. 819-837.


\bibitem{MMAS2004}
\newblock {\sc K. Mikula and D. \v{S}ev\v{c}ovi\v{c}}, 
\newblock \emph{A direct method for solving an anisotropic mean curvature flow of plane curves with an external force}, 
\newblock Math. Methods Appl. Sci., 27 (2004), pp. 1545--1565.

\bibitem{MB2019}
\newblock {\sc J. Minar\v{c}\'{\i}k, M. Kimura and M.  Bene{\v s}},
\newblock \emph{Comparing motion of curves and hypersurfaces in $R^m$}, 
\newblock Discrete Contin. Dyn. Syst. Ser. B, 9 (2019), pp. 4815--4826.

\bibitem{MB2020}
\newblock {\sc J. Minar\v{c}\'{\i}k and M. Bene{\v s}}, 
\newblock \emph{Long-term Behavior of Curve Shortening Flow in $R^3$}, 
\newblock SIAM J. Math. Anal., 52(2) (2020), pp. 1221--1231.

\bibitem{Mura}
\newblock {\sc T. Mura}, 
\newblock \emph{Micromechanics of Defects in Solids}, 
\newblock Springer, Netherlands, 1987.

\bibitem{Paus2013}
\newblock {\sc P. Pauš, J. Kratochvíl, and M. Beneš}, 
\newblock \emph{A dislocation dynamics analysis of the critical cross-slip annihilation distance and the cyclic saturation stress in fcc single crystals at different temperatures}, 
\newblock Acta Materialia, 61 (2013), pp. 7917–7923.


\bibitem{0965-0393-24-3-035003}
\newblock {\sc P. Pau{\v s} , M. Bene{\v s}, M. Kol{\' a}{\v r}, and J. Kratochv{\' i}l}, 
\newblock \emph{Dynamics of dislocations described as evolving curves interacting with obstacles}, 
\newblock Model. Simul. Mater. Sci., 24 (2016), p. 035003.

\bibitem{MS2014}
\newblock {\sc M. Reme\v{s}\'{\i}kov\'a, K. Mikula, P. Sarkoci, and D. \v{S}ev\v{c}ovi\v{c}}, 
\newblock \emph{Manifold evolution with tangential redistribution of points},
\newblock SIAM J. Sci. Comput., 36-4 (2014), pp. A1384-A1414.

\bibitem{Reneker}
\newblock {\sc D.~H. Reneker and  A.~L. Yarin},
\newblock \emph{Electrospinning jets and polymer nanofibers}, 
\newblock Polymer, 49 (2008), pp. 2387--2425.


\bibitem{Ricca1992}
\newblock {\sc R. L. Ricca}, 
\newblock \emph{Physical interpretation of certain invariants for vortex filament motion under LIA},  
\newblock Phys. Fluids, 4 (1992), pp. 938--944.

\bibitem{Roux:06}
\newblock {\sc A. Roux, A., Uyhazi, K., Frost, A. et al.}
\newblock\emph{GTP-dependent twisting of dynamin implicates constriction and tension in membrane fission}, 
\newblock Nature, 441 (2006), pp. 528-–531 . 


\bibitem {RuSte92}
\newblock {\sc J. Rubinstein and P. Sternberg}, 
\newblock \emph{Nonlocal reaction-diffusion equation and nucleation},
\newblock IMA J. Appl. Math., 48 (1992), pp. 249--264.

\bibitem{SevcovicYazaki2012}
\newblock {\sc D. \v{S}ev\v{c}ovi\v{c} and  S. Yazaki},
\newblock \emph{Computational and qualitative aspects of motion of plane curves with a curvature adjusted tangential velocity},
\newblock Math. Methods Appl. Sci., 35 (2012), pp. 1784--1798.

\bibitem{Shlomovitz:09}
{\sc R. Shlomovitz and N. S. Gov},
\emph{Membrane-mediated interactions drive the condensation and coalescence of {FtsZ} rings},
\newblock Phys. Biol., 6(4) (2009), pp.  046017.

\bibitem{Shlomovitz:11}
\newblock {\sc R. Shlomovitz and N. S. Gov and A. Roux},
\newblock \emph{Membrane-mediated interactions and the dynamics of dynamin oligomers on membrane tubes},
\newblock New J. Phys., 13(6) (2011), pp. 065008


\bibitem{Strain1989}
\newblock {\sc J. Strain}, 
\newblock \emph{A boundary integral approach to unstable solidification},
\newblock J. Comput. Phys., 85(2),  (1989), pp. 342--389.


\bibitem{Thomson:67}
\newblock {\sc W. Thomson},
\newblock \emph{On Vortex Atoms},
\newblock Proc.~R.~Soc.~Edinb., 6 (1867), 94--105.

\bibitem{Vega2015}
\newblock {\sc L. Vega}, 
\newblock \emph{The dynamics of vortex filaments with corners},
\newblock Commun. Pur. Appl. Anal., 14(4) (2015), 1581--1601.

\bibitem{Wu}
\newblock {\sc X.-F. Wu, Y. Salkovskiy, and Y. A. Dzenis}, 
\newblock \emph{Modeling of solvent evaporation from polymer jets in electrospinning},
\newblock Appl. Phys. Lett., 98 (2011), p. 223108. 


\bibitem{Xu:12}
\newblock {\sc G. Xu},
\newblock \emph{Geometric Partial Differential Equations for Space Curves}, 
\newblock Report No. ICMSEC-12-13 December 2012, Institute of Computational Mathematics and Scientific/Engineering Computing, Chinese Academy of Sciences.

\bibitem{Xu}
\newblock {\sc L. Xu, H. Liu, N. Si, and E.W.M. Lee},  
\newblock \emph{Numerical simulation of a two-phase flow in the electrospinning proces}, 
\newblock Int. J. Numer. Method. H., 24(8) (2014), pp. 1755--1761.

\bibitem{Yarin}
\newblock {\sc A. Yarin, B. Pourdeyhimi, and S. Ramakrishna}, 
\newblock \emph{Fundamentals and applications of micro- and nanofibers}, 
\newblock Cambridge University Press, Cambridge (2014).


\end{thebibliography}
\end{document}